\newtheorem{theorem}{Theorem}[section]
\newtheorem{proposition}[theorem]{Proposition}
\newtheorem{lemma}[theorem]{Lemma}
\newtheorem{corollary}[theorem]{Corollary}
\newtheorem{remark}[theorem]{Remark}
\newtheorem{definition}[theorem]{Definition}
\newtheorem{example}[theorem]{Example}
\newcommand{\be}{\begin{equation}}
\newcommand{\ee}{\end{equation}}
\numberwithin{equation}{section}
\newcommand{\sfd}{{\sf d}}
\DeclareMathOperator{\sing}{Sing}
\DeclareMathOperator{\Div}{div}
\DeclareMathOperator{\V}{\rm{Vol}}
\DeclareMathOperator{\Vol}{\rm{Vol}}
\DeclareMathOperator{\Pe}{\mathcal{P}}
\DeclareMathOperator{\dist}{dist}
\DeclareMathOperator{\supp}{spt}
\DeclareMathOperator{\A}{\mathcal{A}}
\DeclareMathOperator{\Ha}{\mathcal{H}}
\DeclareMathOperator{\rad}{rad}
\renewcommand{\div}{{\text {div}}}
\newcommand{\R}{\mathbb{R}}
\newcommand{\eps}{{\varepsilon}}
\newcommand{\de}{\partial}
\renewcommand{\dim}{{\text{dim}}}
\renewcommand{\d}{{\textrm {d}}}
\newcommand\sdif{{\vartriangle}}
\renewcommand{\P}{{\mathcal{P}}}
\newcommand\N{{\mathbb N}}
\newcommand\cH{{\mathcal H}}
\newcommand\cL{{\mathcal L}}
\newcommand{\Id}{{\text{Id}}}
\newcommand{\fn}{\normalcolor}
\begin{document}

\title{On a isoperimetric-isodiametric inequality}

\author{Andrea Mondino}  \thanks{A. Mondino,  MSRI-Berkeley\&Universit\"at Z\"urich, Institut f\"ur Mathematik. email: \textsf{andrea.mondino@math.uzh.ch}}
\author {Emanuele Spadaro} \thanks{E. Spadaro, Max-Planck-Institut,  Institut f\"ur Mathematik.   email: \textsf{spadaro@mis.mpg.de}}

\begin{abstract} 
The Euclidean mixed isoperimetric-isodiametric inequality states that the round ball maximizes the volume under 
constraint on the product between boundary area and radius. The goal of the paper is to investigate such mixed 
isoperimetric-isodiametric inequalities in Riemannian manifolds. We first prove that the same inequality, with the 
sharp Euclidean constants,  holds on Cartan-Hadamard spaces as well as on minimal submanifolds of $\R^n$. The equality 
cases are also studied and completely characterized; in particular, the latter gives a  new link with  free boundary 
minimal submanifolds in a Euclidean ball. We also consider the case of manifolds with non-negative Ricci curvature and 
prove a new comparison result stating that metric balls in the manifold have product of  boundary area and radius  
bounded by the Euclidean counterpart and equality holds if and only if the ball is actually Euclidean. 
\\We then pass to consider the problem of the existence of optimal shapes (i.e. regions minimizing the product of  
boundary area and radius under the constraint of having fixed enclosed volume), called here isoperimetric-isodiametric 
regions.
While it is not difficult to show existence if the ambient manifold is compact, the situation changes dramatically if 
the manifold is not compact: indeed we give examples of spaces where there exists no isoperimetric-isodiametric region 
(e.g. minimal surfaces with planar ends and more generally $C^0$-locally-asymptotic Euclidean Cartan-Hadamard 
manifolds), and we prove that on the other hand on $C^0$-locally-asymptotic  Euclidean manifolds with non-negative 
Ricci curvature there exists an isoperimetric-isodiametric region for every positive volume (this class of spaces 
includes a large family of metrics playing a key role in general relativity and Ricci flow: the so called Hawking 
gravitational instantons and the Bryant-type Ricci solitons).
\\Finally we pass to prove the optimal regularity of the boundary of  isoperimetric-isodiametric regions: in the part which does not touch a minimal enclosing ball the boundary is  a smooth  hypersurface outside of a closed subset of Hausdorff co-dimension $8$,  and in a neighborhood of the contact  region the boundary is a $C^{1,1}$-hypersurface with explicit estimates on the $L^\infty$-norm of the mean curvature.
\end{abstract}

\maketitle

\tableofcontents

\section{Introduction}
One of the oldest questions of mathematics is the \emph{isoperimetric} problem: \emph{What is the largest amount of volume that can be
enclosed by a given amount of area?} 
A related classical question is the \emph{isodiametric} problem: \emph{What is the largest amount of volume that can be
enclosed by a domain having a fixed diameter?}
 \\
 
In this paper we address a mix of the previous two questions, namely we investigate the following mixed isoperimetric-isodiametric problem:  \emph{What is the largest amount of volume that can be
enclosed by a domain having a fixed product of  diameter and boundary area?}
\\

Of course, if we ask  the three above questions in the Euclidean space, the answer is given by the round balls  of 
the suitable radius; but, of course, the situation in non-flat geometries is much more subtle.   We start by recalling 
classical material on the isoperimetric problem which motivated our investigation on the  mixed 
isoperimetric-isodiametric one. 
\\

The  solution of the isoperimetric problem in the Euclidean space $\R^n$ can be summarized by the classical 
isoperimetric inequality
\begin{equation}\label{eq:IsopIne}
n\, \omega_{n}^{\frac{1}{n}}\V(\Omega)^{\frac{n-1}{n}} \leq \A(\partial \Omega) \, , \quad \text{for every } \Omega 
\subset \R^n \text{ open subset with smooth boundary,} \quad
\end{equation}
where  $\V(\Omega)$ is the $n$-dimensional Hausdorff measure of $\Omega$ (i.e. the ``volume'' of $\Omega$),  
$\A(\partial \Omega)$ is the $(n-1)$-dimensional Hausdorff measure of $\partial \Omega$ (i.e. the ``area'' of $\partial 
\Omega$), and   $\omega_n:=\V(B^n)$ is the volume of the unit ball in $\R^n$.  As it is well known, the regularity 
assumption  on $\Omega$ can be relaxed a lot (for instance \eqref{eq:IsopIne} holds for every set $\Omega$ of finite 
perimeter) but let us not enter in technicalities  here since we are just motivating our problem.

As anticipated above, in the present paper we will not deal with the isoperimetric problem itself but we will focus on 
a mixed isoperimetric-isodiametric problem. Let us start by stating the Euclidean mixed isoperimetric-isodiametric 
inequality which will act as model for this paper. Given  a bounded  open subset $\Omega \subset \R^n$  with smooth 
boundary, by the divergence theorem in $\R^n$ (see Section \ref{sec:Eucl} for the easy proof),  we have
\begin{equation}\label{eq:IsoPDRn}
n \V(\Omega) \leq \rad(\Omega) \A(\partial \Omega),
\end{equation} 
where $\rad(\Omega)$ is the radius of the smallest ball of $\R^n$ containing $\Omega$ (see \eqref{eq:Defr} for the 
precise definition). As observed in Remark \ref{rem:rigidityEuc}, inequality \eqref{eq:IsoPDRn} is \emph{sharp} and 
\emph{rigid}; indeed, equality occurs if and only if $\Omega$ is a round ball in $\R^n$.

In sharp contrast  with the classical isoperimetric problem, where both problems are still open in the general case, it 
is not difficult to show  that the inequality  \eqref{eq:IsoPDRn} holds in 
Cartan-Hadamard spaces (i.e.~simply connected Riemannian manifolds with non-positive sectional curvature) and on 
minimal submanifolds of $\R^n$, see Proposition \ref{thm:CH}, Proposition \ref{thm:IsoPDMinExt} and Proposition 
\ref{thm:IsoPDMinint}. Even if the validity of inequality \eqref{eq:IsoPDRn} in such spaces is probably known to 
experts, we included it here in order to motivate the reader and also because the equality case for minimal submanifolds 
present an interesting link with free-boundary minimal surfaces: equality is attained in  
\eqref{eq:IsoPDRn} if and only if the minimal submanifold is a free boundary minimal surface in a Euclidean ball (see  
Proposition \ref{thm:IsoPDMinExt} for the precise statement and Remarks  \ref{rem:FreeBCritMet}-\ref{rem:ExFreeBounday} 
for more information about free boundary minimal surfaces).  

If on one hand the negative curvature gives a stronger isoperimetric-isodiametric inequality, on the the other hand we show that non-negative Ricci curvature forces metric balls to  satisfy a weaker isoperimetric-isodiametric inequality. The precise statement is the following.

\begin{theorem}[Theorem \ref{thm:Ric>0}] \label{thm:Ric>0Intro}
Let $(M^n,g)$ be a complete (possibly non compact) Riemannian $n$-manifold with non-negative Ricci curvature.  Let  $B_r\subset M$ be a metric ball  of volume $V=\V_g(B_r)$, and denote with $B^{\R^n}(V)$ the round ball in $\R^n$ having volume $V$. Then
\begin{equation}
\rad(B_r) \, \A(\de B_r)=r \, \A(\de B_r) \leq n \V_g(B_r)=\rad_{\R^n}(B^{\R^n}(V)) \, \A_{\R^n} (\de B^{\R^n}(V)).
\end{equation}
Moreover equality holds if and only if $B_r$ is \emph{isometric} to a round ball in the Euclidean space $\R^n$. In 
particular, for every $V\in (0,\V_g(M))$ it holds
\begin{equation}\label{eq:CompRic>0Intro}
\inf \{ \rad(\Omega) \P(\Omega) \,:\,  \Omega \subset M, \, \V_g(\Omega)=V \} \leq n V =  \inf \{ \rad(\Omega) \P(\Omega) \,:\,  \Omega \subset \R^n, \, \V_{\R^n}(\Omega)=V \}, 
\end{equation}
with equality for some $V\in (0,\V_g(M))$ if and only if every metric ball in $M$ of volume $V$ is isometric to a round ball in $\R^n$. In particular if equality occurs for some $V\in (0,\V_g(M))$ then  $(M,g)$ is flat, i.e. it has identically zero sectional curvature.
\end{theorem}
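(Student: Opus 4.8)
The plan is to prove the inequality by a direct application of the Bishop--Gromov volume comparison theorem together with the coarea formula, and then to handle the rigidity statement via the equality case of Bishop--Gromov.

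First I would fix a point $p\in M$ and, for the metric ball $B_r=B_r(p)$, recall that $\rad(B_r)\le r$ trivially (any ball of radius $r$ centered at $p$ contains $B_r$), so it suffices to prove $r\,\A(\de B_r)\le n\,\V_g(B_r)$. To this end I would introduce the function $v(s):=\V_g(B_s(p))$ and its Euclidean model $\bar v(s):=\omega_n s^n$. By the coarea formula $v'(s)=\A(\de B_s)$ for a.e.\ $s$, while Bishop--Gromov asserts that $s\mapsto v(s)/\bar v(s)$ is non-increasing; equivalently, writing this out, $\frac{d}{ds}\log\bigl(v(s)/s^n\bigr)\le 0$, i.e.\ $s\,v'(s)\le n\,v(s)$ for a.e.\ $s>0$. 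Evaluating at $s=r$ gives exactly $r\,\A(\de B_r)\le n\,\V_g(B_r)$. For the Euclidean identity on the right-hand side, I would just note that for the round ball $B^{\R^n}(V)$ of radius $\rho=(V/\omega_n)^{1/n}$ one has $\rho\cdot n\omega_n\rho^{n-1}=n\omega_n\rho^n=nV$, which matches $n\V_g(B_r)$ when the volumes agree. This establishes the chain of inequalities in the statement.

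Next, for the rigidity, suppose $r\,\A(\de B_r)=n\,\V_g(B_r)$. Then the monotone quantity $v(s)/\bar v(s)$ has vanishing derivative at $s=r$; but a non-increasing function can only be stationary there if it is constant on $[0,r]$ (using that its limit as $s\to 0^+$ is $\omega_n/\omega_n$ after normalization), so $v(s)=\omega_n s^n$ for all $s\le r$. The equality case of the Bishop--Gromov theorem then forces $B_r(p)$ to be isometric to the Euclidean ball $B_r\subset\R^n$. This is the step I expect to be the only genuinely delicate point: one must invoke the sharp rigidity of Bishop--Gromov (equality of the volume ratio on an interval implies the metric ball is a Euclidean cone over the unit sphere, hence flat and isometric to the round ball) and, if one wants to avoid quoting it as a black box, re-derive it via the Jacobian/Jacobi-field analysis along geodesics emanating from $p$, checking that the Ricci lower bound becomes an equality identically on $B_r$, that all radial sectional curvatures vanish, and that the exponential map is a diffeomorphism onto the Euclidean ball with the flat pullback metric. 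The converse (Euclidean balls realize equality) is immediate from the computation above.

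Finally, for the last assertion about the infima over regions of fixed volume $V\in(0,\V_g(M))$: the right-hand equality $\inf\{\rad(\Omega)\P(\Omega):\Omega\subset\R^n,\ \V_{\R^n}(\Omega)=V\}=nV$ is precisely the Euclidean inequality \eqref{eq:IsoPDRn} together with its rigidity (Remark \ref{rem:rigidityEuc}), the infimum being attained by the round ball. For the left-hand side I would simply take $\Omega$ to be a metric ball $B_r(p)\subset M$ of volume exactly $V$ (such $r$ exists by continuity of $v$ and the intermediate value theorem, since $V<\V_g(M)$), and apply the already-proven ball inequality: $\rad(B_r)\P(B_r)=r\,\A(\de B_r)\le n V$. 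Taking the infimum over all such competitors gives \eqref{eq:CompRic>0Intro}. If equality holds in \eqref{eq:CompRic>0Intro} for some $V$, then every metric ball of volume $V$ must itself realize equality $r\,\A(\de B_r)=nV$ — here one uses that the infimum on the left is $\le r\,\A(\de B_r)$ for each such ball, so equality of the infimum with $nV$ forces $r\,\A(\de B_r)=nV$ for all of them — whence by the rigidity just proved each such ball is isometric to a Euclidean round ball; since through every point $p\in M$ there passes a metric ball of volume $V$ (again by continuity, as $\V_g(B_s(p))\to 0$ as $s\to0$ and $\to\V_g(M)>V$), the metric $g$ is flat in a neighborhood of every point, i.e.\ $(M,g)$ has identically zero sectional curvature.
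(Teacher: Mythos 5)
Your route (Bishop--Gromov volume monotonicity plus the coarea formula) is genuinely different from the paper's, which integrates the Laplacian comparison $\Delta\big(\tfrac12 \sfd_{x_0}^2\big)\le n$ over the ball via a Gauss--Green formula for finite perimeter sets; the route could be made to work, but as written it has a real gap in the rigidity step. From $r\,\A(\de B_r)=n\V_g(B_r)$ you infer that the non-increasing ratio $s\mapsto v(s)/(\omega_n s^n)$ has vanishing derivative at $s=r$, and then claim that ``a non-increasing function can only be stationary there if it is constant on $[0,r]$''. That implication is false: for instance $f(s)=1-\int_0^s t\,(r-t)_+\,\d t$ is non-increasing, has limit $1$ at $0^+$ and $f'(r)=0$, yet is strictly decreasing on $(0,r)$. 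Vanishing of the derivative of the \emph{volume} ratio at the single radius $r$ does not, by monotonicity alone, force $v(s)=\omega_n s^n$ on $[0,r]$. What rescues the argument is the finer half of Bishop--Gromov, which you never invoke: the monotonicity of the \emph{sphere} ratio $s\mapsto \A(\de B_s)/(n\omega_n s^{n-1})$ (with the area taken away from the cut locus) and the consequent inequality $\A(\de B_s)/(n\omega_n s^{n-1})\le v(s)/(\omega_n s^n)$, obtained by integrating the former. Equality $r\,\A(\de B_r)=n v(r)$ means the two ratios agree at $s=r$; since the sphere ratio at $t<r$ dominates its value at $r$, integrating yields equality a.e.\ on $(0,r)$, and the limit $1$ at $0^+$ then gives $v(s)=\omega_n s^n$ on $[0,r]$, after which the Bishop--Gromov rigidity you quote applies. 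Alternatively one can argue pointwise on the Jacobian along radial geodesics, or, as the paper does, from equality in the Laplacian comparison (which forces $\Delta\tfrac12\sfd_{x_0}^2=n$ on $B_r\setminus\mathcal C_{x_0}$ and flatness via the Riccati equation). Without one of these inputs the rigidity conclusion does not stand.

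A second, lesser but genuine, issue is the evaluation ``at $s=r$'' in the inequality itself. The coarea formula gives $v'(s)=\A(\de B_s)$ and the differentiated monotonicity gives $s\,v'(s)\le n\,v(s)$ only for \emph{almost every} $s$, whereas the statement concerns a fixed radius $r$. At a singular radius, i.e.\ when the cut locus $\mathcal C_{x_0}$ meets $\de B_r$ in a set of positive $\cH^{n-1}$-measure, $v$ need not be differentiable at $r$ and $v'(r)$ need not coincide with the boundary measure; this is precisely the case the paper devotes the second half of its proof to, approximating $r$ by regular radii and using lower semicontinuity of the perimeter under $L^1_{\loc}$ convergence (with $\A(\de B_r)$ interpreted as $\P(B_r)$, as in its Theorem 4.1). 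You need either that approximation-plus-semicontinuity argument or a pointwise Bishop-type bound on $\cH^{n-1}(\de B_r\setminus\mathcal C_{x_0})$ valid for \emph{every} $r$; as stated, the passage from an a.e.\ inequality to the given radius is unjustified. The remaining parts of your proposal (the Euclidean identity $nV=\rad\cdot\A$ for round balls, using metric balls of volume $V$ as competitors for \eqref{eq:CompRic>0Intro}, and deducing flatness from the rigidity at every center) are fine and essentially as in the paper.
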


\begin{remark}
\rm{Since by Bishop-Gromov volume comparison we know that if $Ric_g\geq 0$ then for every metric ball 
$B_r(x_0)\subset M$ it holds  $\V_g(B_r(x_0))\leq \omega_n r^n= \V_{\R^n}(B^{\R^n}_r)$, it follows that
$$\rad(B_r(x_0)) \geq \rad_{\R^n} (B^{\R^n}(V)),$$  
where $B^{\R^n}(V)$ is a Euclidean ball of volume $V=\Vol_g(B_r(x_0))$. Therefore Theorem \ref{thm:Ric>0Intro} in 
particular implies  that $\P(B_r(x_0))\leq \P_{\R^n}(B^{\R^n}(V))$, but is a strictly stronger statement which at  best 
of our knowledge is original.
The aforementioned  counterpart of Theorem \ref{thm:Ric>0Intro} for the isoperimetric problem was proved instead by 
Morgan-Johnson \cite[Theorem 3.5]{MJ} for compact manifolds and extended to non-compact manifolds in \cite[Proposition 
3.2]{MoNa}.} \hfill $\Box$
\end{remark}

In Section \ref{sec:Existence} we investigate the existence of optimal shapes in  a general Riemannian manifold $(M,g)$.
More precisely, given a   measurable subset $E\subset M$ we denote with $\P(E)$ its perimeter and define its extrinsic radius as
\[
\rad(E) := \inf\left\{r>0 \;:\; \V_g(E\setminus B_r(z_0)) = 0 \;\text{for some } z_0 \in M\right\},
\]
where $B_r(z_0)$ denotes the open metric ball with center $z_0$ and radius $r>0$. 
We consider the following minimization problem: for every fixed $V \in (0, \V_g(M))$, 
\begin{equation}\label{e:min problemIntro}
\min \; \Big\{ \rad(E)\,\P(E) \;:\; E \subset M,\;\V_g(E) = V\Big\},
\end{equation}
and call the minimizers of \eqref{e:min problemIntro} isoperimetric-isodiametric sets (or regions). To best of our 
knowledge this is first time such a problem is considered in literature.
\\As it happens also for the isoperimetric problem, we will find that if the ambient manifold is compact then for 
every volume there exists an isoperimetric-isodiametric region (see Theorem \ref{thm:Exist} and Corollary 
\ref{cor:ExCpt}) but if the ambient space is non-compact the situation changes dramatically.  Indeed in Examples 
\ref{ex:NoExMinimal}-\ref{ex:NoExALE} we show that in complete minimal submanifolds with planar ends (like the helicoid)  
and in asymptotically locally Euclidean Cartan-Hadamard manifolds there exists  no isoperimetric-isodiametric region of 
positive volume. On the other hand, we show that in  $C^0$-locally asymptotically Euclidean manifolds (see Definition 
\ref{def:C0Conv} for the precise notion) with non negative Ricci curvature for every volume there exists an 
isoperimetric-isodiametric region:
 
\begin{theorem}[Theorem \ref{thm:ExistRic>0}] \label{thm:ExistRic>0Intro}
Let $(M,g)$ be a complete Riemannian $n$-manifold with non-negative Ricci curvature and  fix any reference point  
$\bar{x}\in M$. Assume that for any diverging sequence of points  $(x_k)_{k \in N} \subset M$, i.e. $\sfd(x_k, 
\bar{x})\to \infty$, the sequence of pointed manifolds $(M, g, x_k)$ converges in the pointed $C^{0}$-topology to the 
Euclidean space $(\R^n, g_{\R^n}, 0)$. 
\\Then for every $V \in (0, \V_g(M))$ there exists a minimizer of the problem   \eqref{e:min problemIntro}, in other 
words there exists an isoperimetric-isodiametric region of volume $V$.
\end{theorem}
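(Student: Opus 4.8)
The plan is to run the direct method, letting the extrinsic-radius constraint play the role that concentration-compactness plays for the ordinary isoperimetric problem. First I would dispose of two degenerate cases. If $M$ is compact the conclusion is already Theorem~\ref{thm:Exist}, so I may assume $M$ non-compact; and if $(M,g)$ is flat, then, being complete, non-compact and asymptotic to $\R^n$ in the pointed $C^0$-sense along every diverging sequence, it must be globally isometric to $\R^n$ (by Killing--Hopf $M=\R^n/\Gamma$, and a non-trivial $\Gamma$ would leave a compact or lower-dimensional factor in the pointed $C^0$-limit at infinity), so a round ball of volume $V$ is a minimizer by the equality case of \eqref{eq:IsoPDRn}. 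Hence from now on $M$ is non-compact and non-flat, and Theorem~\ref{thm:Ric>0Intro} gives the \emph{strict} inequality
\[
I(V):=\inf\big\{\rad(E)\,\P(E)\ :\ E\subset M,\ \V_g(E)=V\big\}<nV .
\]

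Let $(E_k)_k$ be a minimizing sequence, $\V_g(E_k)=V$ and $\rad(E_k)\,\P(E_k)\to I(V)$. By Bishop--Gromov every metric ball of radius $r$ has volume at most $\omega_n r^n$, so $\rad(E_k)\ge(V/\omega_n)^{1/n}=:c_1>0$. On the other hand, non-negative Ricci curvature together with the uniform non-collapsing forced by the $C^0$-asymptotic-Euclidean hypothesis makes the isoperimetric profile of $M$ strictly positive, hence $\P(E_k)\ge c_0=c_0(V)>0$; combined with $\rad(E_k)\,\P(E_k)\to I(V)<\infty$ this gives $\rad(E_k)\le R_0$ and $\P(E_k)\le C$ for fixed constants. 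Passing to a subsequence I may assume $\rad(E_k)\to\rho_\infty$ and $\P(E_k)\to p_\infty$ with $\rho_\infty p_\infty=I(V)$, and I fix $z_k\in M$ with $E_k\subset B_{\rad(E_k)+1/k}(z_k)$ up to a Lebesgue-null set.

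Now the argument splits according to whether the centres $z_k$ stay bounded or diverge. If $\sfd(z_k,\bar x)$ is bounded along a subsequence, then all the $E_k$ lie in a fixed compact set, so $BV$-compactness gives $E_k\to E$ in $L^1$, with $\V_g(E)=V$ (volume is preserved in the compact region), $\P(E)\le p_\infty$ (lower semicontinuity of perimeter), and $\rad(E)\le\rho_\infty$ (lower semicontinuity of the extrinsic radius: a.e.\ point of $E$ lies eventually in $E_k\subset B_{\rad(E_k)+1/k}(z_k)$, and after a further subsequence $z_k\to z$, so $E\subset\overline{B_{\rho_\infty}(z)}$); hence $\rad(E)\,\P(E)\le\rho_\infty p_\infty=I(V)$ and $E$ is the desired minimizer. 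If instead $\sfd(z_k,\bar x)\to\infty$, the hypothesis provides, for large $k$, near-isometric diffeomorphisms of Euclidean balls onto neighbourhoods of $z_k$; pulling $E_k$ back produces sets $\tilde E_k$ in a fixed Euclidean ball, and since volume and perimeter are the Hausdorff measures $\Ha^n$ and $\Ha^{n-1}$, which depend only on the uniformly converging distance function, one obtains $\V_{\R^n}(\tilde E_k)\to V$, $\P_{\R^n}(\tilde E_k)\le(1+o(1))\P(E_k)$ and $\rad(\tilde E_k)\le\rad(E_k)+o(1)$. The sharp Euclidean inequality \eqref{eq:IsoPDRn} then yields $nV=\lim n\V_{\R^n}(\tilde E_k)\le\rho_\infty p_\infty=I(V)$, contradicting $I(V)<nV$; so the second case cannot occur, and the first case produces the claimed region.

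The genuine obstacles are the two analytic inputs used above: the strict positivity of the isoperimetric profile of $M$ — this is precisely what keeps $\rad(E_k)$ bounded and so prevents a minimizing sequence from simultaneously spreading out and escaping to infinity — and the control of perimeter and extrinsic radius under the merely $C^0$ comparison with $\R^n$, which forces one to argue with the metric (Hausdorff-measure) definitions of area and volume rather than with the divergence theorem; the remaining steps are routine lower-semicontinuity and compactness bookkeeping.
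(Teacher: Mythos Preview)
Your argument follows the same strategy as the paper's proof: bound $\rad(E_k)$ and $\P(E_k)$ uniformly (the paper packages this step as verifying the hypotheses of Theorem~\ref{thm:Exist}), then split according to whether the enclosing centers stay bounded or diverge, using the $C^0$-asymptotic hypothesis together with the Euclidean inequality \eqref{eq:IsoPDRn} in the diverging case and BV-compactness plus lower semicontinuity of $\rad$ and $\P$ in the bounded case.

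The one substantive difference is how the diverging case is closed off. You preemptively dispose of flat $M$ (arguing via Killing--Hopf that flat plus $C^0$-ALE forces $M\cong\R^n$), so that the rigidity in Theorem~\ref{thm:Ric>0Intro} yields the \emph{strict} inequality $I(V)<nV$, and then the diverging case gives $nV\le I(V)$, a contradiction. The paper instead does not split off the flat case: it lets the diverging case produce $I(V)=nV$ and then invokes the rigidity clause of Theorem~\ref{thm:Ric>0Intro} directly --- equality forces every metric ball of volume $V$ to be isometric to a Euclidean ball, and such a ball \emph{is} the sought minimizer. This is cleaner, because your Killing--Hopf step (``a non-trivial $\Gamma$ would leave a compact or lower-dimensional factor in the pointed $C^0$-limit at infinity'') is not fully justified as written: one must actually check that for a non-trivial flat quotient $\R^n/\Gamma$ there exists a diverging sequence along which the injectivity radius stays bounded, which requires a short argument about the displacement function of a non-trivial Euclidean isometry. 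The paper's route sidesteps this classification entirely.
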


Let us mention that the counterpart of Theorem \ref{thm:ExistRic>0Intro} for the isoperimetric problem was proved in \cite{MoNa} capitalizing on the work  by Nardulli \cite{NardAJM}.

\begin{remark}
\rm{
It is well known that the only manifold  with non-negative Ricci curvature and $C^0$-\emph{globally} asymptotic to 
$\R^n$ is $\R^n$ itself. Indeed if $M$ is   $C^0$-globally asymptotic to $\R^n$ then 
$$\lim_{R\to \infty} \frac{\V_g(B_R(\bar{x}))}{\omega_n R^n}=1,$$ which by the rigidity statement associated to the Bishop-Gromov inequality implies that $(M,g)$ is globally  isometric to $\R^n$.  On the other hand, the assumption of  Theorem \ref{thm:ExistRic>0Intro} is much weaker as it ask $(M,g)$ to be just \emph{locally} asymptotic to $\R^n$ in $C^0$ topology and many important examples enter in this framework as explained in next Example \ref{Ex:ALE}.}
 \hfill$\Box$
\end{remark}

\begin{example}\label{Ex:ALE}
\rm{
The class of manifolds satisfying the assumptions of Theorem  \ref{thm:ExistRic>0Intro} contains many geometrically and physically relevant examples.
\begin{itemize}
 \item \emph{Eguchi-Hanson and more generally ALE gravitational instantons.} These are 4-manifolds, solutions of the 
Einstein vacuum equations with null cosmological constant (i.e.~they are Ricci flat, $Ric_g \equiv 0$), they are 
non-compact with just one end which is topologically a quotient of $\R^4$ by a finite subgroup of $O(4)$, and the 
Riemannian metric $g$ on this end is asymptotic to the Euclidean metric up to terms of order $O(r^{-4})$,
$$g_{ij}=\delta_{ij}+O(r^{-4}),$$
with appropriate decay in the derivatives of $g_{ij}$ (in particular, such metrics are  $C^0$-locally asymptotic, in the sense of Definition \ref{def:C0Conv}, to the Euclidean $4$-dimensional space). 
The first example of such manifolds was discovered by Eguchi and Hanson in  \cite{EH78};  the authors, inspired by the 
discovery of self-dual instantons in Yang-Mills Theory, found a self-dual ALE instanton metric. 
The Eguchi-Hanson example was then generalized by Gibbons and Hawking \cite{GH}, see  also the work by Hitchin 
\cite{Hi1}.  These  metrics constitute the  building blocks of the Euclidean quantum gravity theory of Hawking  (see 
\cite{H1,H2}). The ALE Gravitational Instantons were classified in 1989 by Kronheimer (see \cite{Kro1,Kro2}).

 \item \emph{Bryant-type solitons}. The Bryant solitons, discovered by  R. Bryant  \cite{Bryant}, are special but 
fundamental solutions to the Ricci flow (see for instance the work of Brendle \cite{Bre3D,BreHD} for higher 
dimension). Such metrics are complete,   have non-negative Ricci curvature (they actually satisfy the stronger condition 
of having nonnegative curvature operator) and are locally $C^0$-asymptotically Euclidean. Other soliton examples fitting 
our assumptions are given by Catino-Mazzieri in \cite{CM12}.  \hfill$\Box$
 \end{itemize}}   
\end{example}
The last Section \ref{sec:OptReg} is then devoted to establish the optimal regularity for 
isoperimetric-isodiametric regions under suitable assumptions on regularity of the enclosing ball.  We first observe 
that outside of the contact region with the minimal enclosing ball $B$, such sets are locally minimizers of the 
perimeter under volume constraint. Therefore by classical results (see, for example, \cite[Corollary 
3.8]{Morgan}) in the interior of $B$ the boundary of the region is a smooth hypersurface outside a singular set of 
Hausdorff co-dimension at least $8$.

The rest of the paper is devoted to prove the optimal regularity at the contact region. We first   show in Section 
\ref{s:C1alpha} that isoperimetric-isodiametric  regions  are almost-minimizers for the perimeter (see Lemma \ref{l:almost 
min}) and therefore, by a result of Tamanini \cite{Tam} their boundaries are $C^{1,1/2}$ regular (see Proposition \ref{p:regularity}).  
In Section \ref{s:LinftyEst}, by means of geometric  comparisons and sharp first variation  arguments, we show that the 
mean curvature of the boundary of an isoperimetric-isodiametric  region is in $L^\infty$ with explicit estimates. 
Finally in Section \ref{s:Opt} we establish the optimal $C^{1,1}$ regularity. We mention that, strictly speaking, 
Section \ref{s:LinftyEst} is not needed to prove the optimal regularity; in any case we included such section since  provides an explicit 
sharp $L^\infty$-estimate on the mean curvature and is of independent interest. Now the let us state the main regularity result.

\begin{theorem}[Theorem \ref{p:regularity2}]\label{p:regularity2intro}
Let $E\subset M$ be an isoperimetric-isodiametric set and $x_0 \in M$ be such that $\V_g(E \setminus 
B_{\rad(E)}(x_0)) = 0$. Assume that $B:= B_{\rad(E)}(x_0))$ has smooth boundary.
Then, there exists $\delta>0$ such that $\de E \setminus B_{\rad(E) - \delta}(x_0)$ is $C^{1,1}$ regular.
\end{theorem}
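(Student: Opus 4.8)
The plan is to exploit the fact that, in a neighbourhood of the contact set $\Gamma:=\de E\cap\de B$, the set $E$ solves an \emph{obstacle problem} for the perimeter functional with the smooth obstacle $B$. Away from $\de B$, inside $B$, the set $E$ is a local minimizer of the perimeter under a volume constraint, so by the interior regularity recalled above $\de E$ is there a smooth hypersurface outside a closed singular set of Hausdorff codimension at least $8$; being closed and disjoint from $\Gamma$, this singular set stays at positive distance from $\de B$, and therefore everything reduces to analysing $\de E$ near $\Gamma$. The first step is to record a sharpening of Lemma \ref{l:almost min}: if $F\subset M$ satisfies $F\subseteq B$ and $F\triangle E\subset\subset U$ for a small open set $U$, then $\rad(F)\le\rad(E)$ (take $z_0=x_0$ in the definition of $\rad$), whence $\rad(E)\,\P(E)\le\rad(F)\,\P(F)\le\rad(E)\,\P(F)$, i.e.\ $\P(E)\le\P(F)$; combining this with a volume-fixing deformation supported away from $U$ (performed at a regular non-contact point of $\de E$, the case $E=B$ being trivial), exactly as in the proof of Lemma \ref{l:almost min}, one obtains a constant $\Lambda\ge 0$ such that $E$ minimizes $F\mapsto\P(F)+\Lambda\,|\V_g(F)-\V_g(E)|$ among all $F$ with $F\subseteq B$ and $F\triangle E\subset\subset U$.

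Next I would localize. Fixing $p\in\Gamma$, Proposition \ref{p:regularity} gives that $\de E$ is $C^{1,1/2}$ near $p$, while $\de B$ is smooth; since $E\subseteq B$ and $p\in\de E\cap\de B$, the two hypersurfaces are tangent at $p$, with $\de E$ lying on the inner side of $\de B$. In a chart adapted to $\de B$ at $p$ one writes $\de B=\{x_n=\psi(x')\}$ with $\psi$ smooth and $\de E=\{x_n=u(x')\}$ with $u\in C^{1,1/2}$, $u\le\psi$, $u(0)=\psi(0)$ and $\nabla u(0)=\nabla\psi(0)$. The constrained minimality just established says precisely that $u$ solves the obstacle problem, with obstacle $\psi$ from above, for a quasilinear operator $\mathcal{M}$ of prescribed-mean-curvature type (the Riemannian area operator together with the $\Lambda$-term); since $\nabla u$ is bounded by the $C^{1,1/2}$ estimate, $\mathcal{M}$ is uniformly elliptic on the relevant region, $u$ has constant mean curvature (hence is $C^\infty$) on the open non-contact set $\{u<\psi\}$, and a one-sided differential inequality for $\mathcal{M}u$ holds weakly everywhere, perturbations of $E$ towards its interior being always admissible.

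At this point I would invoke the regularity theory for obstacle problems with smooth obstacle for operators of this type: the solution inherits the $C^{1,1}$ regularity of the obstacle, so $u\in C^{1,1}_{\loc}$ near $0$ and $\de E$ is $C^{1,1}$ near $p$. One side of the detachment estimate, $\psi-u\ge 0$, is immediate; the quadratic bound $\psi-u\le C|x'|^2$ near a point of the free boundary $\de\Gamma$ is obtained by a Caffarelli-type argument: $\de B$ is a smooth barrier from the outside, a constant-mean-curvature hypersurface with boundary data read off from $\de E$ on a small geodesic sphere is a smooth barrier from the inside, and a blow-up/non-degeneracy argument — using that either the constant mean curvature of the non-contact part differs from the (bounded) mean curvature of $\de B$, forcing non-degenerate quadratic detachment, or else $u\equiv\psi$ on a whole neighbourhood — excludes slower detachment. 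Finally, since the manifold is complete and $\rad(E)<\infty$, the closed ball $\overline B$, and with it $\Gamma$, is compact; covering $\Gamma$ by finitely many such neighbourhoods and using that the interior singular set stays away from $\de B$, one obtains $\delta>0$ for which $\de E\setminus B_{\rad(E)-\delta}(x_0)$ is of class $C^{1,1}$.

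The hard part will be the genuinely optimal exponent at the free boundary $\de\Gamma$: a bound on the generalized mean curvature of $\de E$ (such as the $L^\infty$ estimate of Section \ref{s:LinftyEst}) only yields $\de E\in C^{1,\alpha}$ for every $\alpha<1$, never $C^{1,1}$, so the quadratic detachment estimate genuinely requires the obstacle-problem structure. This is exactly where the smoothness of the minimal enclosing ball $\de B$ enters and where the non-degeneracy/monotonicity machinery does the real work; the remaining ingredients — the sharpened minimality, the reduction to graphs via the already-established $C^{1,1/2}$ regularity, the uniform ellipticity, and the covering argument — are comparatively routine.
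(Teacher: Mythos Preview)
Your overall architecture is the same as the paper's: reduce to an obstacle problem for the area operator, pass to graphs using the $C^{1,\frac12}$ regularity already established, prove a quadratic detachment estimate $|\psi-u|\le C\,r^2$ at contact points, combine with interior Schauder estimates at the scale of the distance to the contact set, and cover. Where your proposal becomes shaky is precisely at the step you yourself flag as the hard one.

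First, the sentence ``invoke the regularity theory for obstacle problems with smooth obstacle for operators of this type'' is not innocent here. The Euler--Lagrange operator you obtain has the form $Lu=\div\big(A(x,u,\nabla u)\nabla u+b(x,u,\nabla u)\big)-f(x,u,\nabla u)$, so both the coefficients and the forcing term depend on $u$ and $\nabla u$. The standard reference for $C^{1,1}$ regularity of quasilinear obstacle problems (Gerhardt) assumes the forcing term is $W^{1,\infty}$, which in this situation is equivalent to already knowing $u\in W^{2,\infty}$; the paper points this circularity out explicitly in Remark~\ref{r:gerhardt}. So the black-box route you propose does not close as stated. Second, your own sketch of the quadratic bound conflates two different things: non-degeneracy (a \emph{lower} bound $\sup_{B_r}(\psi-u)\ge c\,r^2$ at genuine free-boundary points) and quadratic growth (the \emph{upper} bound $\psi-u\le C\,r^2$ needed for $C^{1,1}$). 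The dichotomy ``$H_0\neq H_B$ forces non-degenerate quadratic detachment, or else $u\equiv\psi$ nearby'' is a non-degeneracy statement and says nothing about the upper bound; and the inner CMC barrier you describe does not obviously sit below $u$ across the contact set without already controlling second derivatives.

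What the paper actually does for the upper bound (Proposition~\ref{p:quadratic}) is short and worth internalizing: write $u-\psi=w_1+w_2$ on $B_r$, where $\mathcal L w_1=0$ with $w_1=u-\psi$ on $\partial B_r$, and $\mathcal L w_2=Lu-\mathcal L\psi-d$ with $w_2=0$ on $\partial B_r$, $\mathcal L=c^{ij}\partial_{ij}$ being the principal part. Since $Lu\le 0$ distributionally and $\mathcal L\psi+d\in L^\infty$, the one-sided elliptic $L^\infty$ estimate gives $w_2\ge -Cr^2$; hence $w_1(0)=-w_2(0)\le Cr^2$, and Harnack for the nonnegative $\mathcal L$-harmonic $w_1$ gives $w_1\le Cr^2$ on $B_{r/2}$; finally the maximum principle applied to $w_2+C|x|^2$ on $B_r\setminus\Lambda(u)$ (where $Lu=0$) yields $w_2\le Cr^2$ there as well. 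This uses only the $C^{0,\alpha}$ bound on the coefficients coming from $u\in C^{1,\frac12}$, so there is no circularity. The paper then pairs this with a scaled Schauder estimate away from $\Lambda(u)$ (Lemma~\ref{l:stima D2}) to get a uniform pointwise quadratic Taylor bound \eqref{eq:uxyquad0}, and concludes $u\in C^{1,1}$ via the elementary Lemma~\ref{l:distrib}. Your covering/compactness endgame is fine; the gap is in the middle.
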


An essential ingredient in the proof of Theorem  \ref{p:regularity2intro} is Proposition \ref{p:quadratic}, which roughly tells that the boundary of $E$ leaves the obstacle at most quadratically. Then the conclusion will follow by combining Schauder estimates outside of the contact region (see Lemma \ref{l:stima D2}) with the general fact that functions which leave the first order approximation quadratically are $C^{1,1}$ -- see Lemma \ref{l:distrib}. 
Although the techniques exploited for this part of the paper are inspired by the ones introduced in the study of the classical obstacle problem (cf., for example, \cite{Caffarelli}), here we treat the geometric case of the area functional in a Riemannian manifold with volume constraints and we take several short-cuts by thanks to some specifically geometric arguments, such as the theory of almost minimizers.
{ In particular, such geometric situation doesn't seem to be trivially covered by the regularity results for nonlinear variational inequalities, as developed for example by Gerhardt \cite{Gerhardt} -- see Remark~\ref{r:gerhardt}.}

\begin{remark}
\rm{
Note that the $C^{1,1}$ regularity is optimal, because in general one cannot expect to have continuity of the second 
fundamental form of $\de E$ across the free boundary of $\de E$, i.e.~the points on the relative (with respect to $\de 
B$) boundary of $\de E \cap \de B$. The same is indeed true for the simplest case of the classical obstacle problem.}
\end{remark}

\noindent {\bf Acknowledgment.}  Part  of the work has been  developed while A. M. was lecturer at the  Institut f\"ur Mathematik at the  Universit\"at Z\"urich and the project was finalized when A. M. was in residence at the Mathematical Science Research Institute in Berkeley, California, during the spring  semester 2016 and was  supported by the National Science Foundation under the Grant No. DMS-1440140. He wishes to express his gratitude to both the institutes for the stimulating atmosphere and the excellent working conditions.  

\section{Notation, Preliminaries and the Euclidean case}\label{sec:Eucl}
Let $(Z, \sfd)$ be a metric space. Given an open subset $\Omega\subset Z$, we define its \emph{extrinsic radius} as 
\be\label{eq:Defr}
\rad(\Omega):=\inf \{r>0: \; \Omega \subset B_r(z_0) \; \text{for some } z_0 \in Z \} \quad,
\ee
where $B_r(z_0)$ denotes the  open metric ball of center $z_0$ and  radius $r>0$. 

The model inequality for the first part of the paper is the Euclidean mixed isoperimetric-isodiametric  inequality obtained by the following integration by parts. Let $\Omega \subset \R^n$ be a bounded open subset with smooth boundary 
 and let $x_0 \in \R^n$  be a point such that 
\be\label{eq:x0max}
\max_{x\in \bar{\Omega}} |x-x_0|= \rad(\Omega).
\ee
Denoted with  $X$ the vector field  $X(x):=x-x_0$, by the divergence theorem in $\R^n$ we then get
\be\label{eq:IsoPDRn1}
n \V(\Omega)= \int_{\Omega} \Div X \, d \Ha^n= - \int_{\partial \Omega} X \cdot \nu  \, d{\mathcal{H}}^{n-1} \leq \rad(\Omega) \A(\partial \Omega),
\ee 
where $\V(\Omega)$ denotes the Euclidean $n$-dimensional  volume of $\Omega$, $\nu$ is the inward pointing unit normal vector and $\A(\partial \Omega)$ is the Euclidean $(n-1)$-dimensional area of $\partial \Omega$, which here is assumed to be smooth. Notice that, analogously, if $\Omega \subset \R^n$ is a finite perimeter set one gets the inequality
\be\label{eq:IsoPDRnPer}
\V(\Omega)\leq \frac{\rad(\Omega)}{n} \Pe(\Omega),
\ee  
where, of course, $\Pe(\Omega)$ denotes the perimeter of $\Omega$ (see \S~\ref{Sec:notation} for the definitions of $\Pe(\Omega)$ and $\rad(\Omega)$ for
finite perimeter sets).
\begin{remark} \label{rem:rigidityEuc}
\rm{The inequalities \eqref{eq:IsoPDRn1} and \eqref{eq:IsoPDRnPer} are \emph{sharp} and \emph{rigid}: indeed equality occurs if and only if  $\Omega$ is a round ball.}  \hfill$\Box$
 \end{remark}

 \section{Euclidean  isoperimetric-isodiametric inequality in Cartan-Hadamard manifolds and minimal submanifolds}\label{Sec:CH}
 In order to motivate and gently introduce the reader to the  topic, in this section we will prove that the Euclidean isoperimetric-isodiametric inequality holds with the same constant in Cartan-Hadamard spaces and in minimal submanifolds.  Possibly apart  from the rigidity statements, here we do not claim originality since such inequalities are probably well known to experts (cf. \cite{BuZa}, \cite{HoSp}, \cite{MiSi}). However we included this section for the following reasons:
 \begin{itemize}
 \item While for the  isoperimetric-isodiametric inequality the proofs are a consequence of a non difficult integration by parts argument,  the corresponding statements for the classical isoperimetric inequality are still open problems (see Remark  \ref{rem:OPCH} and Remark \ref{rem:OPMin}). This suggest that possibly also in other situations  isoperimetric-isodiametric inequalities  may behave better than the classical isoperimetric ones.
 \item  The rigidity statements, in case of minimal submanifolds, show interesting connections between the  isoperimetric-isodiametric inequality and free boundary minimal surfaces, a topic which recently has received a lot of attention (for more details see Remark \ref{rem:FreeBCritMet} and Remark \ref{rem:ExFreeBounday}).
 \end{itemize}

 \subsection{The case of Cartan-Hadamard manifolds}
 Recall that a Cartan-Hadamard $n$-manifold is a complete simply connected Riemannian $n$-dimensional manifold with non-positive sectional curvature. By a classical theorem of Cartan and Hadamard (see for instance \cite{DoC}) such manifolds are diffeomorphic to $\R^n$ via the exponential map. The next result  is a sharp and rigid mixed isoperimetric-isodiametric inequality in such spaces. For this section, without loosing much, the non-expert reader may assume the region $\Omega \subset M$ to have smooth boundary, in this case the perimeter is just the standard $(n-1)$-volume of the boundary (the perimeter will instead play a role in the next sections about existence and regularity of optimal sets).
  
 \begin{proposition}\label{thm:CH}
 Let $(M^n,g)$ be a Cartan-Hadamard manifold. Then for every smooth  open subset (or more generally for every finite perimeter set)
$\Omega \subset M^n$  it holds
\be\label{eq:IsoPDCH}
n \V(\Omega) \leq \rad(\Omega) \, \A(\partial \Omega)
\ee 
 where $\V(\Omega)$ denotes the $n$-dimensional Riemannian volume of $\Omega$ and  $\A(\partial \Omega)$ the $(n-1)$-dimensional area of the smooth boundary $\partial \Omega$ (in case $\Omega$ is a finite perimeter set, just replace 
 $\A(\partial \Omega)$ with $\Pe(\Omega)$, the perimeter of $\Omega$ in the right hand side, and $\rad(\Omega)$ is as in \S~\ref{Sec:notation}\footnote{For readers' convenience we recall here the definition of $\rad(\Omega)$ for a finite perimeter set $\Omega\subset M$: $\rad(\Omega):=\inf\{r>0\, :\, \V(\Omega\setminus B_r)=0, \; B_{r}\subset M \text{ metric ball}\}$.}).
Moreover if for some $\Omega$ the equality is achieved, then $\Omega$ is  isometric to  \emph{an Euclidean ball}.
 \end{proposition}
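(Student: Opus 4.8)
First I would fix a point $x_0\in M$ realizing the extrinsic radius, i.e.\ $\V\big(\Omega\setminus B_{\rad(\Omega)}(x_0)\big)=0$; such a point exists by a routine compactness argument (assuming, as we may, $\rad(\Omega)<\infty$). The plan is then to reproduce the Euclidean integration by parts \eqref{eq:IsoPDRn1}, replacing the position vector field $x-x_0$ by $X:=\nabla\big(\tfrac12 r^2\big)=r\,\nabla r$, where $r:=\dist(\cdot,x_0)$. Because $M$ is Cartan-Hadamard, $\exp_{x_0}$ is a diffeomorphism and radial geodesics are minimizing, so $r^2=|\exp_{x_0}^{-1}(\cdot)|^2$ is a smooth function on all of $M$; hence $X$ is a smooth (in particular Lipschitz) vector field with $|X|=r$. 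Non-positivity of the sectional curvature enters through the Hessian comparison theorem, which gives $\nabla^2 r\ \ge\ \tfrac1r\,(g-\d r\otimes\d r)$ on $M\setminus\{x_0\}$; tracing this, and using $\Div X=|\nabla r|^2+r\,\Delta r=1+r\,\Delta r$, we obtain $\Div X\ge n$ on $M\setminus\{x_0\}$ (and in fact $\Div X=\Delta(\tfrac12 r^2)$ extends smoothly across $x_0$ with value $n$ there).

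The inequality \eqref{eq:IsoPDCH} then follows from the (generalized) Gauss-Green theorem applied to the Lipschitz field $X$:
\be\label{eq:CHchain}
n\,\V(\Omega)\ \le\ \int_\Omega \Div X\,\d\V\ =\ \int_{\de^*\Omega} X\cdot\nu_\Omega\,\d\Ha^{n-1}\ \le\ \int_{\de^*\Omega}|X|\,\d\Ha^{n-1}\ \le\ \rad(\Omega)\,\Pe(\Omega),
\ee
where $\nu_\Omega$ is the measure-theoretic outer unit normal, $\Pe(\Omega)=\Ha^{n-1}(\de^*\Omega)$, and we used $|X|=r\le\rad(\Omega)$ along $\de^*\Omega$ (since $\Omega\subseteq B_{\rad(\Omega)}(x_0)$ up to a $\V$-null set, the inclusion also holds $\Ha^{n-1}$-a.e.\ on the reduced boundary). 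When $\de\Omega$ is smooth this is the classical divergence theorem and $\Pe(\Omega)=\A(\de\Omega)$.

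For the rigidity statement, assume $\V(\Omega)>0$ and that equality holds in \eqref{eq:IsoPDCH}, so that every inequality in \eqref{eq:CHchain} is an equality. From the first one, $\Div X=n$, i.e.\ $\Delta r=\tfrac{n-1}{r}$, holds $\V$-a.e.\ on $\Omega$; from the last one, $r=\rad(\Omega)=:R$ holds $\Ha^{n-1}$-a.e.\ on $\de^*\Omega$. The latter says that $D\mathbf 1_\Omega$ is concentrated, up to $\Ha^{n-1}$-null sets, on the smooth hypersurface $\de B_R(x_0)$; hence $\mathbf 1_\Omega$ has vanishing distributional gradient on the connected open set $B_R(x_0)$, and since $\Omega\subseteq B_R(x_0)$ with $\V(\Omega)>0$ this forces $\Omega=B_R(x_0)$ up to a $\V$-null set. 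Now on $B_R(x_0)$ the function $\Delta(\tfrac12 r^2)$ is continuous and equals $n$ $\V$-a.e., hence identically, i.e.\ $\Delta r\equiv\tfrac{n-1}{r}$ on $B_R(x_0)\setminus\{x_0\}$; comparing with the Hessian comparison inequality --- two symmetric forms on the $(n-1)$-dimensional bundle $(\nabla r)^\perp$ whose traces now coincide --- yields $\nabla^2 r=\tfrac1r(g-\d r\otimes\d r)$ there. Equivalently, the geodesic spheres about $x_0$ have shape operator $\tfrac1r\,\Id$, equivalently, in geodesic polar coordinates about $x_0$ the metric reads $g=\d r^2+r^2\,g_{\bS^{n-1}}$ on $B_R(x_0)$ --- exactly the Euclidean metric. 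Therefore $B_R(x_0)$, and hence $\Omega$, is isometric to a Euclidean ball of radius $R$.

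All of the ingredients except one are routine: attainment of the extrinsic radius by a center $x_0$, smoothness of $r^2$ on a Cartan-Hadamard manifold, the Hessian comparison inequality, and the Gauss-Green formula for Lipschitz fields on sets of finite perimeter. The step I expect to require the most care is the rigidity analysis --- specifically, promoting ``$r=R$ $\Ha^{n-1}$-a.e.\ on $\de^*\Omega$'' to ``$\Omega=B_R(x_0)$'' at the level of finite perimeter sets, and then invoking the equality case of the Hessian/Laplacian comparison theorem (through the shape-operator computation) to conclude that $B_R(x_0)$ is flat.
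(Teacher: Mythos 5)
Your proposal is correct and follows essentially the same route as the paper: integrating the comparison inequality $\Delta\big(\tfrac12 \sfd_{x_0}^2\big)\geq n$ (valid on a Cartan--Hadamard manifold by Hessian comparison, with $\sfd_{x_0}^2$ smooth since $\exp_{x_0}$ is a diffeomorphism) over $\Omega$ via Gauss--Green for finite perimeter sets, and bounding $|\nabla(\tfrac12 \sfd_{x_0}^2)|=\sfd_{x_0}\leq\rad(\Omega)$ on the reduced boundary. Your rigidity analysis is the same argument as the paper's, just carried out in more detail (concentration of $D\mathbf 1_\Omega$ on the sphere to identify $\Omega$ with the ball, and the equality case of the Hessian comparison to get flatness), and it is sound.
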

 
 \begin{proof}
 Let $\Omega\subset M^n$ be a subset with finite perimeter; without loss of generality we can assume that $\Omega$ is bounded (otherwise $\rad(\Omega)=+\infty$ and the inequality is trivial). Let $x_0 \in M^n$ be such that 
 $$\max_{x \in \bar{\Omega}} \sfd(x,x_0)=\rad(\Omega),$$
 where $\sfd$ is the Riemannian distance on $(M^n,g)$, for convenience we will also denote $\sfd_{x_0}(\cdot):= \sfd(x_0, \cdot)$.  
 Let $u:=\frac{1}{2} \sfd^2_{x_0}$; by the aforementioned Cartan-Hadamard Theorem (see for instance \cite{DoC}) we know that $u:M^n\to \R^+$ is smooth and by the Hessian comparison Theorem 
 one has $(D^2 u)_{ij}\geq g_{ij}$; in particular, by tracing, we get $\Delta u\geq n$.   Therefore, by the divergence theorem, we infer
 
 \begin{eqnarray}
 n \V(\Omega) &\leq& \int_{\Omega} \Delta u \, d\mu_g =- \int_{\partial^* \Omega} g(\nabla u, \nu) \, d \Ha^{n-1} = - \int_{\partial^* \Omega} \sfd(x, x_0) \; g(\nabla \sfd_{x_0}, \nu) \, d \Ha^{n-1} \nonumber \\
 &\leq&   \rad(\Omega) \Ha^{n-1}(\partial^* \Omega) = \rad(\Omega) \Pe(\Omega) \label{eq:proofCH1},
 \end{eqnarray}
where $\mu_g$ is the measure associated to the Riemannian volume form, $\partial^*\Omega$ is the reduced boundary of $\Omega$ (of course, in case $\Omega$ is a smooth open subset one has $\partial^* \Omega=\partial \Omega$), $\nu$ is the inward pointing unit normal vector (recall that it is $\Ha^{n-1}$-a.e. well defined on $\partial^* \Omega$), and we used that $\sfd_{x_0}$ is 1-Lipschitz. 
Of course \eqref{eq:proofCH1} implies  \eqref{eq:IsoPDCH}.
Notice that if equality holds in  the second line, then $\Omega$ is a metric ball of center $x_0$ and radius $\rad(\Omega)$.  Moreover if equality occurs in the first inequality of the first line then we must have $(D^2 \sfd^2_{x_0})_{ij}\equiv 2 g_{ij}$ on $\Omega$, and by standard comparison (see for instance  \cite[Section 4.1]{Rit}) it follows that $\Omega$ is flat. But since the exponential map in $M$ is a global diffeomorphism it follows that $\Omega$ is isometric to an Euclidean ball.
 \end{proof}


\begin{remark}[Euclidean isoperimetric inequality on Cartan-Hadamard spaces] \label{rem:OPCH}
\rm{The statement corresponding to Proposition \ref{thm:CH} for the isoperimetric problem is the following celebrated conjecture: 
 \emph{Let $(M^n, g)$ be a Cartan-Hadamard space, i.e. a complete simply connected Riemannian $n$-manifold with non-positive sectional curvature. Then every smooth open subset $\Omega \subset M^n$   satisfies  the Euclidean isoperimetric inequality}.
\\This conjecture is  generally attributed to Aubin \cite[Conj. 1]{Aub} but has its roots in  earlier work by Weil \cite{Weil}, as we are going to explain. The problem has been solved affirmatively in the following cases: in dimension 2 by Weil \cite{Weil} in 1926 (Beckenbach and Rad\'o \cite{BR} gave an independent proof in 1933, capitalizing on a result of Carleman \cite{Car} for minimal surfaces), in dimension 3 by  Kleiner \cite{Kleiner} in 1992 (see also the survey paper by Ritor\'e \cite{Rit} for a variant of Kleiner's arguments), and in dimension 4 by Croke \cite{Croke} in 1984. An interesting  feature of this problem is that  the above proofs have nothing to do one with the other and that they work only for one specific dimension; probably also for this reason such a problem is still open in the general case.} \hfill$\Box$
\end{remark}

 
 \subsection{The case of  minimal submanifolds} \label{Sec:Min} 
 Given   a smoothly immersed submanifold $M^n\hookrightarrow \R^{n+k}$, by the first variation formula for the area functional  we know that for every $\Omega\subset M^n$ open bounded subset with smooth boundary and every smooth vector field $X$ along $\Omega$ it holds
 \be\label{eq:TangDivThm}
 \int_{\Omega} \Div_M X \, d \Ha^{n} = - \int_{\Omega} H \cdot X \, d\Ha^n- \int_{\partial \Omega} X \cdot \nu \, d \Ha^{n-1},
 \ee
 where $H$ is the mean curvature vector of $M$ and  $\nu$ is the inward pointing  conormal to $\Omega$ (i.e. $\nu$ is the unit  vector tangent to $M$, normal to $\partial \Omega$ and pointing inside $\Omega$).
 \\
 
We are interested in the case  $M^n\hookrightarrow \R^{n+k}$ is a minimal submanifold, i.e. $H\equiv 0$, and $\Omega \subset M^n$ is a bounded open subset with smooth boundary $\partial \Omega$.  Let $x_0\in \R^{n+k}$ be  such that 
$$\max_{x \in \bar{\Omega}} |x-x_0|_{\R^{n+k}}= \rad_{\R^{n+k}}(\Omega),$$
and observe that, called $X(x):=x-x_0$, one has  $\Div_M X\equiv n$. By applying \eqref{eq:TangDivThm}, we then infer

\be\label{eq:IsoPDMinExt1}
n \Ha^n(\Omega) = \int_{\Omega} \Div_M X \, d\Ha^n = - \int_{\partial \Omega} X \cdot \nu \, d\Ha^{n-1} \leq \rad_{\R^{n+k}}(\Omega)\Ha^{n-1}(\partial \Omega).
\ee
Notice that equality is achieved if and only if $\Omega$ is the intersection of $M$ with a round ball in $\R^{n+k}$ centered at $x_0$ and $\nu(x)$ is  parallel to $x-x_0$, or in other words if and only if $\Omega$ is a free boundary minimal $n$-submanifold in a ball of $\R^{n+k}$. So we have just proved the following result.

\begin{proposition}\label{thm:IsoPDMinExt}
Let $M^n\hookrightarrow \R^{n+k}$ be  a minimal submanifold  and $\Omega \subset M^n$  a bounded open subset with smooth boundary $\partial \Omega$. Then
$$n \Ha^n(\Omega) \leq  \rad_{\R^{n+k}}(\Omega) \, \Ha^{n-1}(\partial \Omega)$$
with equality if and only if $\Omega$ is a free boundary minimal $n$-submanifold in a ball of $\R^{n+k}$.
\end{proposition}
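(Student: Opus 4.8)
The proof of Proposition~\ref{thm:IsoPDMinExt} is essentially already contained in the displayed computation \eqref{eq:IsoPDMinExt1}, so the plan is to organize that computation into a self-contained argument and then carefully analyze the equality case. First I would fix $\Omega \subset M^n$ a bounded open subset with smooth boundary and choose a point $x_0 \in \R^{n+k}$ realizing the extrinsic radius, i.e.~$\max_{x \in \bar\Omega} |x - x_0|_{\R^{n+k}} = \rad_{\R^{n+k}}(\Omega)$; such a point exists by compactness of $\bar\Omega$. Then I set $X(x) := x - x_0$, viewed as a vector field along $M$, and observe that its tangential divergence is $\Div_M X \equiv n$ (this is the same elementary fact used in the Euclidean case: the full divergence of $X$ in $\R^{n+k}$ is $n+k$, and projecting onto an $n$-dimensional tangent plane removes exactly the $k$ normal directions, each contributing $1$). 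Applying the tangential divergence identity \eqref{eq:TangDivThm} with $H \equiv 0$ gives $n\Ha^n(\Omega) = -\int_{\partial\Omega} X\cdot\nu\, d\Ha^{n-1}$, and the pointwise bound $|X(x)| \le \rad_{\R^{n+k}}(\Omega)$ together with $|\nu| = 1$ yields the inequality.

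For the equality case I would trace back through the two places where an estimate was used. The only inequality in the chain is $-\int_{\partial\Omega} X\cdot\nu\, d\Ha^{n-1} \le \rad_{\R^{n+k}}(\Omega)\,\Ha^{n-1}(\partial\Omega)$, and since $H \equiv 0$ there is no slack from the mean curvature term. Equality forces $-X(x)\cdot\nu(x) = \rad_{\R^{n+k}}(\Omega)$ for $\Ha^{n-1}$-a.e.\ $x \in \partial\Omega$, hence (by smoothness) everywhere on $\partial\Omega$. Since $|{-X(x)}| \le \rad_{\R^{n+k}}(\Omega)$ and $|\nu(x)| = 1$, Cauchy--Schwarz equality forces $-X(x) = \rad_{\R^{n+k}}(\Omega)\,\nu(x)$ on $\partial\Omega$; in particular $|x - x_0| = \rad_{\R^{n+k}}(\Omega)$ for every $x \in \partial\Omega$, so $\partial\Omega \subset \partial B_R(x_0)$ with $R = \rad_{\R^{n+k}}(\Omega)$, and moreover the inward conormal $\nu$ points radially toward $x_0$. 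This is exactly the statement that $\Omega = M \cap B_R(x_0)$ meets $\partial B_R(x_0)$ orthogonally along $\partial\Omega$, i.e.\ $\Omega$ is a free boundary minimal submanifold of the ball $B_R(x_0) \subset \R^{n+k}$. Conversely, if $\Omega$ is such a free boundary piece, then along $\partial\Omega$ the conormal is radial and the same computation runs with equality throughout, giving the reverse implication.

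One small point that needs care is the claim ``$\partial\Omega \subset \partial B_R(x_0)$ implies $\Omega \subset B_R(x_0)$ and hence $\Omega = M \cap B_R(x_0)$''; this uses that $\Omega$ is connected (or that we argue component by component) and that $x_0$ was chosen to realize the extrinsic radius, so no point of $\bar\Omega$ lies outside $\bar B_R(x_0)$ and $\partial\Omega$ lying on the sphere means $\Omega$ is a full relatively-open piece of $M$ cut off by that sphere. I would also remark that the orthogonality condition $-X = R\nu$ along $\partial\Omega$ is precisely the free boundary condition, and point the reader to Remarks~\ref{rem:FreeBCritMet}--\ref{rem:ExFreeBounday} for the interpretation of such surfaces as critical points of the area functional among submanifolds with boundary constrained to $\partial B_R(x_0)$.

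The main obstacle, such as it is, is not the inequality (which is a one-line divergence-theorem computation) but making the equality discussion genuinely \emph{if and only if}: one must verify both that free boundary minimal submanifolds in a ball actually achieve equality (straightforward, since the free boundary condition gives $X \parallel \nu$ pointwise) and that the minimality hypothesis $H \equiv 0$ is truly needed for the equality characterization — without it, the term $\int_\Omega H\cdot X$ would have to vanish as well, which would impose an additional orthogonality of $H$ to the position vector and change the conclusion. Since the statement is restricted to minimal submanifolds this complication does not arise, and the proof is complete.
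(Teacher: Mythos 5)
Your proposal is correct and follows essentially the same route as the paper: choosing $x_0$ realizing the extrinsic radius, applying the tangential divergence identity \eqref{eq:TangDivThm} to $X(x)=x-x_0$ with $\Div_M X\equiv n$ and $H\equiv 0$, and characterizing equality via $-X=\rad_{\R^{n+k}}(\Omega)\,\nu$ on $\partial\Omega$, i.e.\ the free boundary condition. Your equality analysis (Cauchy--Schwarz rigidity and the remark on recovering $\Omega=M\cap B_R(x_0)$) is if anything slightly more detailed than the paper's terse statement, and is consistent with it.
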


\begin{remark} [Euclidean isoperimetric inequality on minimal submanifolds] \label{rem:OPMin}
\rm{The statement corresponding to Proposition \ref{thm:IsoPDMinExt} for the isoperimetric problem is the following celebrated conjecture: 
\emph{Let $M^n\subset \R^{m}$ be a minimal $n$-dimensional submanifold and let $\Omega\subset M^n$ be a smooth open subset. Then $\Omega$ satisfies  the Euclidean isoperimetric inequality \eqref{eq:IsopIne}, and equality holds if and only if $\Omega$ is a ball in an affine $n$-plane of $\R^{m}$.} 

To our knowledge the only two solved cases are $i)$ when $\partial \Omega$ lies on an $(m-1)$-dimensional Euclidean sphere centered at a point of $\Omega$ (the argument is by monotonicity, see for instance \cite[Section 8.1]{Choe}) and $ii)$ when $\Omega$ is area minimizing with respect to its boundary $\partial \Omega$ by Almgren \cite{Al86}. Let us mention that a  complete solution of the above conjecture is still not available even for minimal surfaces in $\R^m$, i.e. for $n=2$; however, in the latter situation, the statement is known to be true  in many cases (let us just mention that in case $\Omega$ is a topological disk the problem was solved by Carleman \cite{Car} in 1921, and the case $m=3$ and  $\partial \Omega$ has two connected components was settled much later by Li-Schoen-Yau \cite{LSY}; for more results in this direction and for  a comprehensive overview see the  beautiful survey paper \cite{Choe} by Choe). Let us finally observe that, when $n=2$ and $m=3$, th!
 e above c
 onjecture is a special case of the Aubin Conjecture  recalled in Remark \ref{rem:OPCH}, since  of course the induced metric on a  immersed minimal surface in $\R^3$ has non-positive Gauss curvature; this case was settled in the pioneering work by Weil \cite{Weil}.} \hfill$\Box$
\end{remark}

\begin{remark}[Free boundary minimal submanifolds and critical metrics] \label{rem:FreeBCritMet}
\rm{After a classical work of Nitsche \cite{Ni} in the 80'ies, the last years have seen an increasing interest on free boundary submanifolds  also thanks to recent works of Fraser and Schoen \cite{FS1,FS2} on the topic. 
By definition, a \emph{free boundary  submanifold} $M^n$ of the unit ball $B^{n+k}$, is a proper submanifold which is critical for the area functional with respect to variations of $M^n$ that are allowed to move also  the boundary $\partial M^n$,but  under the constraint $\partial M^n \subset \partial B^{n+k}$.   As a consequence  of the $1^{st}$ variational formula,  such definition forces on one hand the mean curvature to vanish on $M^n\cap B^{n+k}$ and on the other hand   the submanifold to the meet the ambient boundary $\partial B^{n+k}$ orthogonally. These are characterized by the condition that the coordinate functions are Steklov eigenfunctions with eigenvalue 1 \cite[Lemma 2.2]{FS1}; that is, 
$$\Delta x_i = 0 \text{ on } M \text{ and } \nabla_{\nu } x_i = - x_i \text{ on } \partial M.$$
 It turns out that surfaces of this type arise naturally as extremal metrics for the Steklov eigenvalues (see \cite{FS2} for more details); Steklov eigenvalues are eigenvalues of the Dirichlet-to-Neumann map, which sends a given smooth function on the boundary  to the normal derivative of its harmonic extension to the interior.} \hfill$\Box$
\end{remark}

\begin{remark}[Examples of free boundary minimal submanifolds]  \label{rem:ExFreeBounday}
\rm{Let us recall here some well known examples of free boundary minimal submanifolds in the unit ball   $B^{n+k}\subset \R^{n+k}$, for a deeper discussion on the examples below see \cite{FS2}.
\begin{itemize}
\item \emph{Equatorial Disk}.  Equatorial $n$-disks $D^n \subset B^{n+k}$ are the simplest examples of free boundary minimal submanifolds.  By a result of Nitsche \cite{Ni} any simply connected free boundary minimal surface in $B^3$ must be a flat equatorial disk. However, if we admit minimal surfaces of a different topological type, there are other examples, as the critical catenoid described below.
\item \emph{Critical Catenoid}.   Consider the catenoid parametrized on $\R \times S^1$  by the function $$\varphi(t,\theta) = (\cosh t \cos \theta, \cosh t \sin \theta, t)\, .$$
For a unique choice of  $T_0>0$, the restriction of $\varphi$ to $[-T_0, T_0] \times S^1$ defines a minimal embedding into a ball meeting the boundary of the ball orthogonally.  By rescaling  the radius of the ball to 1 we get the critical catenoid in $B^3$. Explicitly, $T_0$ is the unique positive solution of $t = \coth t$.
\item  \emph{Critical M\"obius band}. We think of the M\"obius band $M^2$ as $\R \times S^1$ with the identification $(t, \theta) \sim (-t, \theta+\pi)$. There is a minimal embedding of $M^2$ into $\R^4$ given by
$$\varphi(t,\theta) = (2\sinh t \cos \theta, 2 \sinh t \sin \theta, \cosh 2t \cos 2\theta, \cosh2t \sin 2\theta)\,.$$
For a unique choice of $T_0>0$, the restriction of $\varphi$ to $[-T_0, T_0] \times S^1$ defines a minimal embedding into a ball meeting the boundary of the ball orthogonally. By rescaling  the radius of the ball to 1 we get the critical M\"obis band in $B^4$. Explicitly $T_0$ is the unique positive solution of $\coth t = 2 \tanh 2t$.
\item A consequence of the results of \cite{FS2}  is that for every $k \geq 1$ there exists an embedded free boundary minimal surface in $B^3$ of genus 0 with $k$ boundary components.
\end{itemize}} \hfill$\Box$
\end{remark}



\noindent
Since of course $\rad_{\R^{n+k}}(\Omega) \leq \rad_{M}(\Omega)$, where $\rad_{M}(\cdot)$ is the extrinsic radius in the metric space $(M,\sfd_g)$, we  have a fortiori that 
\be\label{eq:IsoPDMinExt2}
n \Ha^n(\Omega) \leq \rad_{M}(\Omega) \, \Ha^{n-1}(\partial \Omega).
\ee
But in this case the rigidity statement is much stronger, indeed  in case of equality the center of the ball $x_0$ must be a point of  $M$, moreover for every $x \in \partial \Omega$ the segment $\overline{x,x_0}$ must be contained in $M$, therefore  $M$ contains a portion of a minimal cone $\mathcal{C}$ centered at $x_0$. But since by assumption $M$ is  a smooth submanifold and  since the only cone smooth at its origin is an affine subspace, it must be that $M$ contains  a portion of  an affine subspace. By the classical weak unique continuation property for solutions to the minimal submanifold  system, we conclude that $M$ is an affine subspace of $\R^{n+k}$. Therefore we have just proven the next result. 

\begin{proposition}\label{thm:IsoPDMinint}
Let $M^n\hookrightarrow \R^{n+k}$ be  a connected smooth minimal submanifold and $\Omega \subset M^n$  a bounded open subset with smooth boundary $\partial \Omega$. Then
\be\label{eq:IsoPDMinint0}
n \, \Ha^n(\Omega) \leq \rad_{M}(\Omega) \, \Ha^{n-1}(\partial \Omega)
\ee
with equality if and only if $M$ is an affine subspace and $\Omega$ is the intersection of $M$ with a round ball in $\R^{n+k}$ centered at a point of $M$.
\end{proposition}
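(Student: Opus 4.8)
The plan is as follows.

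\emph{The inequality.} First I would derive \eqref{eq:IsoPDMinint0} essentially for free from Proposition~\ref{thm:IsoPDMinExt} (equivalently, from \eqref{eq:IsoPDMinExt1}) together with the elementary comparison $\rad_{\R^{n+k}}(\Omega)\le \rad_M(\Omega)$. The latter holds because the Euclidean distance between two points of $M$ never exceeds their intrinsic distance (a chord is shorter than any arc joining its endpoints), so every intrinsic metric ball $B_r^M(z_0)$ is contained in the concentric Euclidean ball of the same radius; hence an intrinsic ball enclosing $\Omega$ gives a Euclidean one of the same radius, and $n\,\Ha^n(\Omega)\le \rad_{\R^{n+k}}(\Omega)\,\Ha^{n-1}(\partial\Omega)\le \rad_M(\Omega)\,\Ha^{n-1}(\partial\Omega)$.

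\emph{Rigidity, first steps.} Suppose equality holds in \eqref{eq:IsoPDMinint0}. Then both inequalities in the displayed chain are equalities: from the second one, $\rad_{\R^{n+k}}(\Omega)=\rad_M(\Omega)=:R$; from the first one and the rigidity part of Proposition~\ref{thm:IsoPDMinExt}, $\Omega=M\cap B_R(x_0)$, where $B_R(x_0)\subset\R^{n+k}$ is the (unique) smallest Euclidean ball enclosing $\Omega$, and the inward conormal satisfies $\nu(x)=(x_0-x)/R$ along $\partial\Omega$, so that $M$ meets $\partial B_R(x_0)$ orthogonally there. The crucial point I would then establish is that $x_0\in M$: taking an intrinsic centre $z_0\in M$ realizing (or a sequence of centres realizing in the limit) $\rad_M(\Omega)=R$, one has $\bar\Omega\subset\overline{B_R^M(z_0)}\subset\overline{B_R(z_0)}$, so $\overline{B_R(z_0)}$ is a closed Euclidean ball of radius $R=\rad_{\R^{n+k}}(\Omega)$ containing $\bar\Omega$; by uniqueness of the minimal enclosing Euclidean ball, $z_0=x_0$, whence $x_0\in M$. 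Consequently $R=|x-x_0|\le \sfd_g(x,x_0)\le R$ for every $x\in\partial\Omega$, so chord equals arc and the minimizing $M$-geodesic from $x$ to $x_0$ can only be the straight segment; thus $\overline{x,x_0}\subset M$.

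\emph{Rigidity, the cone argument.} Hence $M$ contains the cone $\mathcal C:=\{x_0+s(x-x_0):x\in\partial\Omega,\ s\in[0,1]\}$ with vertex $x_0$. I would now exploit smoothness of $M$ at $x_0$: writing $M$ near $x_0$ as a graph $\xi\mapsto(\xi,F(\xi))$ over $T_{x_0}M$ with $F(0)=0$ and $DF(0)=0$, the fact that a segment $s\mapsto s\,v$ (with $v$ a unit vector) lies in $M$ for all small $s>0$ forces, by first-order Taylor expansion, the component of $v$ normal to $T_{x_0}M$ to be $O(s)$, hence zero; so all directions $(x-x_0)/R$ lie in $T_{x_0}M$ and $\mathcal C\subset P:=x_0+T_{x_0}M$, an affine $n$-plane. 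Then $\partial\Omega=M\cap\partial B_R(x_0)$ is a smooth $(n-1)$-submanifold of the $(n-1)$-sphere $x_0+R\,\bS^{n-1}\subset P$, hence relatively open in it, and being compact it is also closed, so (for $n\ge 2$; the case $n=1$ is trivial) it is all of it; therefore $\mathcal C$ is the full $n$-dimensional ball $P\cap\overline{B_R(x_0)}\subset M$. In particular $M$ and the minimal plane $P$ agree on a nonempty open set; since both solve the minimal surface system ($\Delta_g(\mathrm{id})=H=0$), the classical weak unique continuation property together with connectedness of $M$ gives $M\subset P$, and then $\Omega=M\cap B_R(x_0)=P\cap B_R(x_0)$ is a round ball.

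\emph{Converse and main obstacle.} The converse implication is immediate, being the Euclidean equality case: if $\Omega$ is the round $n$-ball $P\cap B_R(x_0)$ in an affine $n$-plane, then $n\,\Ha^n(\Omega)=n\omega_nR^n=R\cdot n\omega_nR^{n-1}=\rad_M(\Omega)\,\Ha^{n-1}(\partial\Omega)$. The step I expect to be the real obstacle is the identification $x_0\in M$: this is precisely where the loss between the extrinsic and intrinsic radii must be recovered, and it relies on uniqueness of the minimal enclosing ball in $\R^{n+k}$ (and on enough properness of $M$ for an optimal, or limiting, intrinsic centre to belong to $M$). Once this is in hand, the ``a smooth cone through its own vertex is flat'' computation and the appeal to unique continuation are routine.
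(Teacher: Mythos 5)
Your proposal is correct and follows essentially the same route as the paper: the inequality is deduced a fortiori from Proposition~\ref{thm:IsoPDMinExt} via $\rad_{\R^{n+k}}(\Omega)\le\rad_{M}(\Omega)$, and in the equality case one identifies the center $x_0$ as a point of $M$, shows each segment $\overline{x,x_0}$ lies in $M$ so that $M$ contains a cone with vertex $x_0$, uses smoothness of $M$ at the vertex to conclude this cone is flat, and invokes weak unique continuation for the minimal submanifold system to get that $M$ is an affine subspace. The attainment/properness point you flag is glossed over in the paper as well (it simply asserts $x_0\in M$), so your filled-in details (uniqueness of the smallest enclosing ball, the Taylor-expansion flatness argument, the open--closed argument for $\partial\Omega$) only make the same argument more explicit.
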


\begin{remark}
\rm{If we allow $M$ to have conical singularities, then \eqref{eq:IsoPDMinint0} still holds with equality if and only if $M$ is a minimal cone and $\Omega$ is the intersection of $M$ with a round ball in $\R^{n+k}$ centered at a point of $M$. 

Concerning this, recall that  in case $n=2$ and $k=1$ every minimal cone smooth away from the  vertex  is totally geodesic, indeed one of the principal curvatures is always null for cones and so the mean curvature vanishes if and only if all the second fundamental form is null.  Therefore  equality in \eqref{eq:IsoPDMinint0}  is attained if and only if $M^2$ is  an affine plane and $\Omega$ is a flat 2-disk. The analogous result for $n=3$ and $k=1$ is due to Almgren  \cite{Alm} (see also the work of Calabi \cite{Cal}). 

For the general case of  higher dimensions and co-dimensions note that a minimal submanifold $\Sigma^k$ in $S^n$ is naturally the boundary of a minimal submanifold of the ball, the cone $C(\Sigma)$ over $\Sigma$. Using this correspondence it is possible to construct many  non trivial minimal cones:  Hsiang \cite{Hsiang1}-\cite{Hsiang2} gave infinitely many co-dimension 1 examples for $n\geq 4$, the higher co-dimensional problem was investigated in the celebrated paper of Simons \cite{Sim} and the related work of Bombieri-De Giorgi-Giusti \cite{BDG}.} \hfill$\Box$ 
\end{remark}

 arp e non
  posse concludere). If we assume $V$ to be an integer rectifiable varifold   after finitely many steps we have exhausted the  varifold.


\section{The isoperimetric-isodiametric inequality  in manifolds with non-negative Ricci curvature}\label{Sec:Comparison}
In this section we show a comparison result for manifolds with non-negative Ricci curvature which will be used in Section \ref{sec:Existence} to get existence of isoperimetric-isodiametric regions in manifolds  which are  asymptotically locally Euclidean and have  non-negative Ricci (the so called ALE spaces).

\begin{theorem}\label{thm:Ric>0}
Let $(M^n,g)$ be a complete (possibly non compact) Riemannian $n$-manifold with non-negative Ricci curvature.  Let  $B_r\subset M$ be a metric ball  of volume $V=\V(B_r)$, and denote with $B^{\R^n}(V)$ the round ball in $\R^n$ having volume $V$. Then
\begin{equation}
\rad(B_r) \, \P(B_r)=r \, \P(B_r) \leq nV=\rad_{\R^n}(B^{\R^n}(V)) \, \P_{\R^n} (B^{\R^n}(V)).
\end{equation}
Moreover equality holds if and only if $B_r$ is \emph{isometric} to a round ball in the Euclidean space $\R^n$. In particular, for every $V\in (0,\V(M))$ it holds
\begin{equation}\label{eq:CompRic>0}
\inf \{ \rad(\Omega) \P(\Omega) \,:\,  \Omega \subset M, \, \V(\Omega)=V \} \leq n V =  \inf \{ \rad(\Omega) \P(\Omega) \,:\,  \Omega \subset \R^n, \, \V_{\R^n}(\Omega)=V \}, 
\end{equation}
with equality for some $V\in (0,\V(M))$ if and only if every metric ball in $M$ of volume $V$ is isometric to a round ball in $\R^n$. In particular if equality occurs for some $V\in (0,\V(M))$ then  $(M,g)$ is flat, i.e. it has identically zero sectional curvature.
\end{theorem}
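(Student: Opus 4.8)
The plan is to combine the Heintze--Karcher--type volume comparison (in its sharp "mean-value" form for metric balls under $\Rc \geq 0$) with an integration-by-parts argument in the spirit of the Cartan-Hadamard case, but now running in the \emph{opposite} direction. Fix a metric ball $B_r = B_r(x_0)$ and let $\rho := \sfd_{x_0}$. The key elementary observation is that $\rad(B_r) = r$: indeed $B_r \subseteq B_r(x_0)$ so $\rad(B_r)\le r$, while Bishop--Gromov gives $\V_g(B_s(z))\le \omega_n s^n < \V_g(B_r(x_0))$ for any $s<r$ and any $z$, so no smaller ball can contain (a.e. of) $B_r$; this already yields the first equality in the statement. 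For the inequality $r\,\P(B_r)\le nV$ I would argue as follows. On the smooth part of $B_r$ the Laplacian comparison under $\Rc\ge 0$ gives $\Delta \rho \le \frac{n-1}{\rho}$ in the barrier/distributional sense, hence $\Delta(\rho^2/2) = 1 + \rho\,\Delta\rho \le n$ away from the cut locus; the cut locus carries no measure and contributes with the correct sign to the distributional Laplacian, so $\Delta(\rho^2/2)\le n$ holds distributionally on all of $M$. Testing this inequality against the characteristic function of $B_r$ (after the usual approximation to handle the fact that $B_r$ has only finite perimeter, using that $\partial^* B_r$ coincides $\Ha^{n-1}$-a.e.\ with the sphere $\{\rho = r\}$ and that $\nabla(\rho^2/2) = \rho\,\nabla\rho$ with $|\nabla\rho|=1$ a.e.) yields
\[
n\,V = n\,\V_g(B_r) \ge \int_{B_r}\Delta\!\left(\tfrac{\rho^2}{2}\right)d\mu_g = -\int_{\partial^* B_r} g\!\left(\nabla\tfrac{\rho^2}{2},\nu\right)d\Ha^{n-1} = r\,\Ha^{n-1}(\{\rho=r\}) = r\,\P(B_r),
\]
where $\nu$ is the inward unit normal. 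The final Euclidean identity $nV = \rad_{\R^n}(B^{\R^n}(V))\,\P_{\R^n}(B^{\R^n}(V))$ is just \eqref{eq:IsoPDRn1} with equality, as recorded in Remark \ref{rem:rigidityEuc}.

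For the rigidity statement, equality forces $\Delta(\rho^2/2)=n$ a.e.\ on $B_r$, hence $\Delta\rho = \frac{n-1}{\rho}$ on $B_r\setminus\{x_0\}$, which is precisely the equality case in the Laplacian comparison theorem under $\Rc\ge 0$. I would invoke the standard rigidity attached to it (for instance via the Bochner formula applied to $\rho$, forcing $\nabla^2\rho = \frac{1}{\rho}(g - d\rho\otimes d\rho)$ and $\Rc(\nabla\rho,\nabla\rho)=0$ along all radial geodesics), which implies that the exponential map at $x_0$ is a radial isometry on $B_r$, i.e.\ $B_r$ is isometric to the Euclidean ball $B^{\R^n}_r$; one should also check the equality is not destroyed by cut-locus contributions, which is automatic since equality in the global distributional inequality forces the singular part to vanish. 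Conversely a Euclidean ball obviously attains equality. This proves the first two sentences of the theorem.

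The "in particular" part is then nearly formal. For any $\Omega\subset M$ with $\V_g(\Omega)=V$, pick a metric ball $B_r$ with $\V_g(B_r)=V$ (possible since $r\mapsto \V_g(B_r(x))$ is continuous and ranges over $(0,\V_g(M))$), so that $\inf\{\rad(\Omega)\P(\Omega)\} \le \rad(B_r)\P(B_r)\le nV$, which is exactly the Euclidean infimum by \eqref{eq:IsoPDRnPer} and its rigidity. If equality holds for some $V$, then the chosen ball $B_r$ satisfies $\rad(B_r)\P(B_r)=nV$, so by the rigidity just established it is isometric to a Euclidean ball; since $x_0$ was arbitrary and $r$ is determined by $x_0$ through $\V_g(B_r(x_0))=V$, \emph{every} metric ball of volume $V$ is isometric to a round ball in $\R^n$. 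Finally, having a full (open, as $x_0$ varies) family of balls that are Euclidean forces the metric to be flat on the union of these balls; since $M$ is connected and complete and the family can be taken to cover $M$ (every point is the center of such a ball), $(M,g)$ has identically zero sectional curvature. I expect the main technical obstacle to be the careful justification of the distributional Laplacian comparison $\Delta(\rho^2/2)\le n$ across the cut locus together with its use against the finite-perimeter set $B_r$ (identifying $\partial^* B_r$ with $\{\rho=r\}$ up to $\Ha^{n-1}$-null sets and controlling the sign of the singular measure); everything else is either the standard comparison-geometry rigidity machinery or elementary measure theory.
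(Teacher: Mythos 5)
Your strategy is essentially the paper's own proof (global Laplacian comparison $\Delta\big(\tfrac12\sfd_{x_0}^2\big)\le n\,\mu_g$ with non-positive singular part on the cut locus, Gauss--Green against the finite-perimeter ball, rigidity from equality in the comparison, and the formal deduction of \eqref{eq:CompRic>0}), but two steps as written do not hold up. First, your justification of $\rad(B_r)=r$ via Bishop--Gromov is wrong: Bishop--Gromov gives the \emph{upper} bounds $\V_g(B_s(z))\le\omega_n s^n$ and $\V_g(B_r(x_0))\le\omega_n r^n$; it does not give the lower bound $\omega_n s^n<\V_g(B_r(x_0))$ that you invoke (a ball of large radius in a thin flat cylinder has volume far below $\omega_n s^n$ for $s$ comparable to $r$). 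Fortunately only the trivial inequality $\rad(B_r)\le r$ enters the substantive argument (the paper itself asserts the equality without proof), so this does not damage the inequality $\rad(B_r)\,\P(B_r)\le nV$ nor the rigidity, but the argument you give for the equality should be dropped or replaced.

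Second, the displayed chain $-\int_{\partial^* B_r} g\big(\nabla\tfrac{\rho^2}{2},\nu\big)\,d\Ha^{n-1}=r\,\Ha^{n-1}(\{\rho=r\})=r\,\P(B_r)$ is valid only for those radii $r$ with $\cH^{n-1}\big(\de B_r(x_0)\cap{\mathcal C}_{x_0}\big)=0$ (which is a.e.\ $r$ by the coarea formula): for the remaining ``singular'' radii a non-negligible part of $\{\rho=r\}$ lies in the cut locus, where $\nabla\rho$ is undefined, the identification of $\partial^* B_r$ with $\{\rho=r\}$ up to $\cH^{n-1}$-null sets can fail, and $\Ha^{n-1}(\{\rho=r\})$ need not equal $\P(B_r)$. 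You correctly flag this as the main technical obstacle, but you do not supply the idea that resolves it; the paper's fix is to first prove the estimate for regular radii (via a $C^2$ mollification $u_\varepsilon$ of $\tfrac12\sfd_{x_0}^2$ with $\Delta u_\varepsilon\le n$, Gauss--Green for finite perimeter sets, and Fatou), and then to treat an arbitrary radius by approximating with regular radii $r_k\to r$, using lower semicontinuity of the perimeter under $L^1_{loc}$ convergence plus Fatou once more. With these two repairs your argument coincides with the paper's; the rigidity discussion and the derivation of the ``in particular'' statement are fine and match the paper's (indeed your treatment of the second part is more detailed than the paper's one-line remark).
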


\begin{proof}
Let us fix an arbitrary $x_0\in M$ and let $B_r=B_r(x_0)$ be the metric ball in $M$ centered at $x_0$ of radius $r>0$. It is well known that the distance function $\sfd_{x_0}(\cdot):=\sfd(x_0,\cdot)$ is smooth outside the cut locus ${\mathcal C}_{x_0}$ of $x_0$ and that $\mu_g({\mathcal C}_{x_0})=0$. From the co-area formula it follows that for ${\mathcal L}^1$-a.e. $r\geq 0$ one has $\cH^{n-1} ({\mathcal C}_{x_0} \cap \de B_r(x_0))=0$ and, since the cut locus is closed by definition, we get that for  ${\mathcal L}^1$-a.e. $r\geq 0$
the distance function  $\sfd_{x_0}(\cdot)$ is smooth on an open subset of full ${\cH}^{n-1}$-measure on  $\de B_r(x_0)$.  
\\Let us first assume that $r>0$ is one of these regular radii, the general case will be settled in the end by an approximation argument. It is immediate to see that on $\de B_r(x_0)\setminus {\mathcal C}_{x_0}$ we have  $|\nabla \sfd_{x_0}|=1$ and thus  $\de B_r(x_0)\setminus {\mathcal C}_{x_0}$ is a smooth hypersurface. In particular, since  $\cH^{n-1}(\de B_r(x_0)\cap {\mathcal C}_{x_0})=0$, we have that $B_r(x_0)$ is a  finite perimeter set  whose reduced boundary is contained $\de B_r(x_0)\setminus {\mathcal C}_{x_0}$.  Called $\nu$ the inward pointing unit  normal to $\de B_r(x_0)$ on the regular part $\de B_r(x_0)\setminus {\mathcal C}_{x_0}$, from the Gauss Lemma we have
\begin{equation}\label{eq:GaussLemma}
\nu=- \nabla \sfd_{x_0}, \quad \text{on }  \de B_r(x_0)\setminus {\mathcal C}_{x_0}.
\end{equation}
Therefore, called  $u:=\frac{1}{2} \sfd^2_{x_0}$, we infer
\begin{eqnarray}
r \P(B_r(x_0))&=& - \int_{\de B_r(x_0)\setminus {\mathcal C}_{x_0}}  \sfd_{x_0}(x) \, g(\nabla \sfd_{x_0}(x), \nu(x)) \, d \cH^{n-1}(x) =  - \int_{\de B_r(x_0) \setminus {\mathcal C}_{x_0}}   g(\nabla u, \nu) \, d \cH^{n-1} \nonumber \\
&=&- \lim_{\varepsilon \downarrow 0} \int_{\de B_r(x_0) \setminus {\mathcal C}_{x_0}}    g(\nabla u_\varepsilon, \nu) \, d \cH^{n-1} \nonumber 
\end{eqnarray}
where $u_\varepsilon \in C^2(M)$ is a approximation by convolution of $u$ such that $\| \nabla u_\varepsilon -\nabla u\|_{L^\infty(\de B_r(x_0), \cH^{n-1})}\to 0$,  $\Delta u_\varepsilon  \to \Delta u $ in $C^0_{loc}(M\setminus {\mathcal C}_{x_0})$ and $\Delta u_\varepsilon \leq n$ where in the last estimate we used the global Laplacian comparison stating that  $\Delta u$ is a Radon measure with  $\Delta u\leq n \,  \mu_g$.
More precisely, one has that $\Delta u \llcorner M\setminus {\mathcal C}_{x_0}$ is given by $\mu_g$ multiplied by a smooth function bounded above by $n$, and the singular part $(\Delta u)^s$ of $\Delta u$ is a non-positive measure  concentrated on ${\mathcal C}_{x_0}$.    Now  $\nabla u_\varepsilon$ is  a $C^1$ vector field and we can apply the Gauss-Green formula for finite perimeter sets \cite[Theorem 3.36]{AFP} to infer
 \begin{eqnarray}
r \P(B_r(x_0)) &=&  \lim_{\varepsilon \downarrow 0} \int_{{B_r}(x_0)}   \Delta u_\varepsilon  \, d\mu_g  =  \lim_{\varepsilon \downarrow 0} \int_{{B_r}(x_0)\setminus {\mathcal C}_{x_0}}   \Delta u_\varepsilon  \, d\mu_g \leq    \int_{{B_r}(x_0)\setminus {\mathcal C}_{x_0}}   \limsup_{\varepsilon \downarrow 0}  \Delta u_\varepsilon  \, d\mu_g   \nonumber \\
&=&  \int_{{B_r}(x_0)\setminus {\mathcal C}_{x_0}} \Delta u \, d \mu_g   \leq n \, \V(B_r), \label{eq:PfRic>0}
\end{eqnarray}
 where the first  inequality we used Fatou's Lemma combined with the upper bound   $\Delta u_{\varepsilon}\leq n$ and the last inequality is ensured by the local Laplacian Comparison Theorem. 
 Notice that if equality occurs then $\Delta u= n \, \mu_g$ on $B_r(x_0) \setminus {\mathcal C}_{x_0}$ and, by analyzing the equality in Riccati equations,  it is well known that this implies $B_r(x_0)$ to be isometric to the round ball in $\R^n$.

If now $r>0$ is a singular radius, in the sense that $\cH^{n-1}(\de B_r(x_0)\cap {\mathcal C}_{x_0}) >0$, then by the above discussion we can find a sequence of regular radii $r_n \to r$ and, by the lower semicontinuity of the perimeter under $L^1_{loc}$ convergence \cite[Proposition~3.38]{AFP} combined with \eqref{eq:PfRic>0} which is valid for $B_{r_n}(x_0)$, we infer
 \begin{eqnarray}
r \P(B_r(x_0)) &\leq& \liminf_{n \to \infty} r_n  \P(B_{r_n}(x_0)) \leq \liminf_{n \to \infty}  \int_{B_{r_n}(x_0)\setminus {\mathcal C}_{x_0}}  \Delta u \, d \mu_g \leq   \limsup_{n \to \infty}  \int_{M \setminus {\mathcal C}_{x_0}}  \chi_{B_{r_n}(x_0)} \, \Delta u  \, d \mu_g  \nonumber \\
&\leq&  \int_{M \setminus {\mathcal C}_{x_0}}  \limsup_{n \to \infty}   \chi_{B_{r_n}(x_0)} \, \Delta u  \, d \mu_g =   \int_{{B_r}(x_0)\setminus {\mathcal C}_{x_0}} \Delta u \, d \mu_g   \leq n \, \V(B_r),   \label{eq:PfRic>0gen}
\end{eqnarray}
where in the first inequality of the second line we used Fatou's Lemma (we are allowed since $\chi_{B_{r_n}(x_0)} \, \Delta u  \leq n$ on $M \setminus {\mathcal C}_{x_0}$), and the last inequality follows again by local Laplacian comparison.  Notice that, as before, equality in \eqref{eq:PfRic>0gen}  forces  $\Delta u= n \, \mu_g$ on $B_r(x_0) \setminus {\mathcal C}_{x_0}$ and then $B_r(x_0)$ is isometric to a Euclidean ball.

The second part of the statement clearly follows from the first part combined with the Euclidean isoperimetric-isodiametric inequality \eqref{eq:IsoPDRn1}.
\end{proof}

\section{Existence of isoperimetric-isodiametric regions} \label{sec:Existence}
In Section \ref{Sec:CH} we have seen explicit isoperimetric-inequalities in some special situations: Cartan-Hadamard spaces and minimal submanifolds. In the present section we investigate the existence of optimal shapes: as it happens also for the isoperimetric problem, we will find that if the ambient manifold is compact an optimal set always exists but if the ambient space is non-compact the situation changes dramatically. The subsequent sections will be devoted to establish the sharp regularity for the  optimal sets.

\subsection{Notation}\label{Sec:notation}
Let $(M^n,g)$ be a  complete Riemannian manifold and denote by $d_g$ the geodesic distance, by $\mu_g$ the 
measure associated to the Riemannian volume form and by ${\mathfrak X}(M)$ the smooth vector fields.
Given a measurable subset $E \subset M$, the perimeter of $E$ is denoted by $\P(E)$ and is given by the following 
formula
\[
\P(E) := \sup \left\{ \int_{E} \div X\,\d\mu_g\,:\, X \in {\mathfrak X}(M),\; \supp(X) \subset\subset 
M,\;\|X\|_{L^{\infty}(M,g)}\leq 1 \right\},
\]
and, for any open subset $\Omega \subset M$, we write $\P(E, \Omega)$ when the fields $X$ are restricted to have 
compact 
support in $\Omega$.   It is out of the scope of this paper to discuss the theory of finite perimeter sets; standard references are \cite{AFP}, \cite{EG} and \cite{Maggi}.  

Since from now on we will work with sets of finite perimeter, which are well defined up to subsets of measure zero,  we will adopt the following definition of extrinsic radius of a measurable subset $E\subset M$:
\[
\rad(E) := \inf\left\{r>0 \;:\; \mu_g(E\setminus B_r(z_0)) = 0 \;\text{for some } z_0 \in M\right\},
\]
where $B_r(z_0)$ denotes the open metric ball with center $z_0$ and radius $r>0$. A metric ball $B_r(z_0)$ satisfying  $\mu_g(E\setminus B_r(z_0)) = 0$, is called an \emph{enclosing ball} for $E$.

We consider the following minimization problem: for every fixed $V \in (0, \mu_g(M))$, 
\begin{equation}\label{e:min problem}
\min \; \Big\{ \rad(E)\,\P(E) \;:\; E \subset M,\;\mu_g(E) = V\Big\},
\end{equation}
and call the minimizers of \eqref{e:min problem} isoperimetric-isodiametric sets (or regions).

\subsection{Existence of  isoperimetric-isodiametric regions in compact manifolds}
Let us start with the following lemma,  stating the lower semi continuity of the extrinsic radius under $L^1_{loc}$ convergence. 

\begin{lemma}[Lower semi-continuity of extrinsic radius under $L^1_{loc}$ convergence]\label{lem:lscRad}
Let $(M,g)$ be a  (non necessarily compact) Riemannian manifold and let   $(E_k)_{k \in \N\cup\{\infty\}}$ be a sequence of measurable subsets such  that $\chi_{E_k} \to \chi_{E_\infty}$ in $L^1_{loc}(M, \mu_g)$. Then
$$\rad(E_\infty)\leq \liminf_{k \in \N} \rad(E_k). $$
\end{lemma}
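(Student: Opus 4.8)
The plan is to argue by contradiction. Suppose $\liminf_{k} \rad(E_k) = L < \rad(E_\infty)$. After passing to a subsequence (not relabeled) we may assume $\rad(E_k) \to L$ and, fixing $\varepsilon>0$ small with $L + \varepsilon < \rad(E_\infty)$, that $\rad(E_k) < L + \varepsilon$ for all $k$. By definition of $\rad(E_k)$, for each $k$ there is a center $z_k \in M$ with $\mu_g(E_k \setminus B_{L+\varepsilon}(z_k)) = 0$, i.e. $E_k$ is essentially contained in the ball $B_{L+\varepsilon}(z_k)$. The goal is to pass to the limit in these enclosing balls and produce an enclosing ball for $E_\infty$ of radius $\le L + \varepsilon$, contradicting $\rad(E_\infty) > L + \varepsilon$.

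The first step is to show the centers $z_k$ stay in a bounded (hence precompact) region of $M$, so that along a further subsequence $z_k \to z_\infty$ for some $z_\infty \in M$. This is where one must be slightly careful and where the only real subtlety lies. If $\mu_g(E_\infty) > 0$, pick a point $x$ in the measure-theoretic interior of $E_\infty$; then for small $\rho>0$ one has $\mu_g(E_\infty \cap B_\rho(x)) > 0$, and by $L^1_{loc}$ convergence $\mu_g(E_k \cap B_\rho(x)) > 0$ for $k$ large, forcing $B_\rho(x) \cap B_{L+\varepsilon}(z_k)$ to have positive measure, hence $\sfd(x, z_k) < \rho + L + \varepsilon$. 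Thus $(z_k)$ lies in a fixed bounded set and, $M$ being complete, a subsequence converges to some $z_\infty$. (If instead $\mu_g(E_\infty) = 0$ then $\rad(E_\infty) = 0 \le \liminf \rad(E_k)$ trivially and there is nothing to prove; so we may assume $\mu_g(E_\infty)>0$.)

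The second step is to upgrade the convergence $z_k \to z_\infty$ to an inclusion of limit balls. For any $\delta > 0$, once $\sfd(z_k, z_\infty) < \delta$ we have $B_{L+\varepsilon}(z_k) \subset B_{L+\varepsilon+\delta}(z_\infty)$, hence $\mu_g(E_k \setminus B_{L+\varepsilon+\delta}(z_\infty)) = 0$ for $k$ large. Passing to the limit using $\chi_{E_k} \to \chi_{E_\infty}$ in $L^1_{loc}$ (so $\chi_{E_k}\chi_{M \setminus B_{L+\varepsilon+\delta}(z_\infty)} \to \chi_{E_\infty}\chi_{M \setminus B_{L+\varepsilon+\delta}(z_\infty)}$ in $L^1_{loc}$, and the integrals over the fixed bounded set $B_{L+\varepsilon+2\delta}(z_\infty)$ converge) gives $\mu_g(E_\infty \setminus B_{L+\varepsilon+\delta}(z_\infty)) = 0$. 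Hence $\rad(E_\infty) \le L + \varepsilon + \delta$. Letting $\delta \downarrow 0$ yields $\rad(E_\infty) \le L + \varepsilon$, contradicting our choice $L + \varepsilon < \rad(E_\infty)$. This contradiction proves $\rad(E_\infty) \le \liminf_k \rad(E_k)$.

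I expect the main obstacle to be purely bookkeeping: ensuring the containment $E_k \subset B_{L+\varepsilon}(z_k)$ is used only in the almost-everywhere sense compatible with finite-perimeter sets, and handling the degenerate case $\mu_g(E_\infty)=0$ and the possibility that $\liminf_k \rad(E_k) = +\infty$ (in which case the inequality is vacuous) as trivial sub-cases at the outset. No deep tool beyond completeness of $M$ and the definition of $L^1_{loc}$ convergence is needed.
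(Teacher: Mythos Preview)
Your proof is correct and follows essentially the same route as the paper's: reduce to a subsequence realizing the $\liminf$, show the centers of the enclosing balls can be taken to converge (you do this via a density-point argument assuming $\mu_g(E_\infty)>0$, the paper via the dichotomy ``centers bounded vs.\ centers unbounded'', which are contrapositives of one another), and then pass to the limit in the containment to produce an enclosing ball for $E_\infty$. Both arguments tacitly use completeness of $(M,g)$ to extract a convergent subsequence from a bounded sequence of centers, even though the lemma statement does not explicitly assume it.
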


\begin{proof}
Without loss of generality we can assume  $\liminf_{k \in \N} \rad(E_k)<\infty$ so, up to selecting a subsequence, we can assume $\chi_{E_k}\to  \chi_{E_\infty}$ a.e. and  $\lim_{k \uparrow +\infty} \rad(E_k) = \ell <\infty$.
Let $B_k := B_{\rad(E_k)}(x_k)$ be enclosing balls for $E_k$. Then two cases can occur. Either $x_k$ is unbounded, i.e.~$\sup_k d_g(x_k,\bar x) = \infty$ for any $\bar x \in M$, in which case it follows that $E_\infty = \emptyset$ and the conclusion of the lemma is proved. Or there exists $x_\infty \in M$ such that, up to passing to a subsequence, $x_k \to x_\infty$. In this case it is readily verified that
\[
\mu_g\big(E_k \setminus B_{\rad(E_k)+|x_k-x_\infty|}(x_\infty)\big) = 0
\]
from which it follows, by taking the limit as $k\to +\infty$, that $\mu_g\big(E_\infty\setminus B_\ell(x_\infty)\big)=0$, which by definition implies that $\rad(E_\infty) \leq \ell$.
\end{proof}

\noindent
The next theorem is a general existence result for minimizers of the problem  \eqref{e:min problem}, as special cases it will be applied in Corollary \ref{cor:ExCpt} to compact  manifolds and in Theorem \ref{thm:ExistRic>0} for asymptotically locally Euclidean manifolds (ALE for short) having non-negative Ricci curvature. Let us observe that the existence of a minimizer in a non-compact manifold for the classical isoperimetric problem is much harder due to the possibility of  ``small tentacles'' going to infinity in a minimizing sequence; this difficulty is simply not there in the  isoperimetric-isoperimetric problem we are considering, since it would imply the radius to go to infinity.  We believe that this simplification, together with sharp inequalities obtained in the previous section, is another motivation to look at   the isoperimetric-isoperimetric inequality  since it appears more manageable in many situations  than the classical isoperimetric one.

\begin{theorem}[Sufficient conditions for existence of isoperimetric-isodiametric regions] \label{thm:Exist}
Let $(M^n,g)$ be a possibly non compact Riemannian $n$-manifold satisfying the following two  conditions:
\begin{enumerate}
\item $\liminf_{r\to 0^+} \sup_{x \in M} \mu_g(B_r(x))=0$.
\item  There exists $\varepsilon_0>0$ and a  function 
$$\Phi_{Isop}:[0,\varepsilon_0)\to \R^+ \; \text{ with } \; { \lim}_{t\downarrow 0}  \Phi_{Isop}(t)=0, $$
 such that for every finite perimeter set $E\subset M$ with $\P(E)< \varepsilon_0$ the  weak isoperimetric  inequality $\mu_g(E)\leq \Phi_{Isop} (\P(E))$ holds.
\end{enumerate}
Let $V\in (0,\mu_g(M))$ be fixed and let $(E_k)_{k \in \N}\subset M$ be a sequence of finite perimeter sets satisfying 
\be\label{eq:HpEkComp}
\mu_g(E_k)=V, \, \forall k \in \N, \; \text{ and } \; \sup_{k\in \N} \Big( \rad(E_k) \, \P(E_k)\Big) <\infty. 
\ee
Then there exist $R>0$ and a sequence $(x_k)_{k \in N}$ of points in $M$ such that $\mu_g(E_k\setminus B_R(x_k))=0$, i.e. $B_R(x_k)$ are inclosing balls for $E_k$.

In particular, if there exists  a minimizing sequence $(E_k)_{k \in \N}$ for the problem \eqref{e:min problem} relative to some fixed $V\in (0,\mu_g(M))$ such that $\mu_g(E_k\cap K)>0$ for infinitely many $k$ and a fixed compact subset $K\subset M$,  then  there exists an isoperimetric-isodiametric region of volume $V$.
 \end{theorem}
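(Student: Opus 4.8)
The plan is to prove the two assertions of Theorem~\ref{thm:Exist} in turn: first the uniform bound on the radii of the enclosing balls, and then, under the extra compactness hypothesis, the existence of a minimizer by the direct method.

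\textbf{Step 1: uniform bound on the radii.} Set $C_0 := \sup_k \big(\rad(E_k)\,\P(E_k)\big) < \infty$. The key observation is that the perimeters $\P(E_k)$ cannot be too small. Indeed, if $\P(E_k) < \eps_0$ for some $k$, then hypothesis (2) gives $V = \mu_g(E_k) \leq \Phi_{Isop}(\P(E_k))$; since $\Phi_{Isop}(t)\to 0$ as $t\downarrow 0$ and $V>0$, this forces $\P(E_k) \geq \eta$ for some $\eta = \eta(V) \in (0,\eps_0)$ independent of $k$ (choose $\eta$ so that $\Phi_{Isop}(t) < V$ for all $t < \eta$). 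Consequently, for those $k$ with $\P(E_k) < \eps_0$ we get $\rad(E_k) \leq C_0/\eta$, and for the remaining $k$ we get $\rad(E_k) \leq C_0/\eps_0$. Hence $R := C_0/\min\{\eta,\eps_0\}$ works: for every $k$ there is $x_k \in M$ with $\mu_g(E_k \setminus B_R(x_k)) = 0$. (Strictly, $\rad(E_k)$ is an infimum, so one should take $x_k$ realizing $\mu_g(E_k \setminus B_{\rad(E_k)+1/k}(x_k))=0$ and absorb the $1/k$ into $R$, say by taking $R := C_0/\min\{\eta,\eps_0\} + 1$.)

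\textbf{Step 2: existence under the compactness hypothesis.} Let $(E_k)$ be a minimizing sequence for \eqref{e:min problem} at volume $V$, so $\mu_g(E_k)=V$ and $\rad(E_k)\P(E_k) \to I_V := \inf\{\rad(E)\P(E): \mu_g(E)=V\}$; in particular \eqref{eq:HpEkComp} holds, so by Step 1 all $E_k$ are contained (up to null sets) in balls $B_R(x_k)$. By hypothesis that $\mu_g(E_k\cap K)>0$ for infinitely many $k$ and a fixed compact $K$, after passing to a subsequence we may assume $E_k \cap K$ has positive measure for all $k$; since $E_k \subset B_R(x_k)$ up to null sets, the points $x_k$ must stay within bounded distance of $K$ (otherwise $B_R(x_k)$ would eventually miss $K$), so $(x_k)$ lies in a fixed compact set and, up to a further subsequence, $x_k \to x_\infty$. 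Then all $E_k$ are contained up to null sets in the fixed ball $B_{R+1}(x_\infty)$, whose closure is compact. On a fixed relatively compact open set the perimeters $\P(E_k)$ are bounded (again by \eqref{eq:HpEkComp} and Step 1, since $\rad(E_k) \geq$ some positive lower bound coming from $\mu_g(E_k)=V$ together with hypothesis (1)), so by the standard compactness theorem for sets of finite perimeter \cite[Theorem 3.39]{AFP} there is a subsequence with $\chi_{E_k} \to \chi_{E_\infty}$ in $L^1_{loc}(M,\mu_g)$, with $\mu_g(E_\infty)=V$ (the volume passes to the limit because everything sits in the fixed ball $B_{R+1}(x_\infty)$ of finite measure). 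By lower semicontinuity of perimeter under $L^1_{loc}$ convergence and by Lemma~\ref{lem:lscRad},
\[
\rad(E_\infty)\,\P(E_\infty) \leq \liminf_{k} \rad(E_k)\,\liminf_k \P(E_k) \leq \liminf_k \big(\rad(E_k)\,\P(E_k)\big) = I_V,
\]
wait --- this product step needs care, so instead argue: $\P(E_\infty)\leq\liminf_k\P(E_k)$ and $\rad(E_\infty)\leq\liminf_k\rad(E_k)$, and since along a suitable subsequence $\rad(E_k)\to a$, $\P(E_k)\to b$ with $ab=I_V$, we get $\rad(E_\infty)\P(E_\infty)\leq ab = I_V$. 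As $E_\infty$ is admissible, it is a minimizer.

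\textbf{Main obstacle.} The delicate point is Step~2's ``non-escape'' argument: one must rule out that the minimizing sequence slides off to infinity, and this is exactly where the compactness hypothesis $\mu_g(E_k\cap K)>0$ is used to pin the centers $x_k$ near $K$; combined with the uniform radius bound from Step~1 this confines the whole sequence to one fixed relatively compact region, after which the direct method runs routinely. The technical subtlety is that $\rad(E)$ is defined via an infimum, so the enclosing balls need a harmless $+1/k$ enlargement, and one must also verify a positive lower bound on $\P(E_k)$ (so that $\rad(E_k)$ stays bounded and the perimeters stay bounded on the fixed ball) --- this follows from hypothesis (1), which via $\mu_g(E_k)=V>0$ forbids $E_k$ from being squeezed into an arbitrarily small ball, hence bounds $\rad(E_k)$ below and $\P(E_k)$ above.
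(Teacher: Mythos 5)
Your proposal is correct and follows essentially the same route as the paper: hypothesis (2) gives a positive lower bound on $\P(E_k)$ and hence, via the bound on $\rad(E_k)\P(E_k)$, a uniform radius bound, while hypothesis (1) gives the lower bound on $\rad(E_k)$ (hence the perimeter upper bound) needed for BV compactness, after which the compact set $K$ pins the enclosing balls in a fixed compact region and one concludes by lower semicontinuity of the perimeter and of the radius (Lemma \ref{lem:lscRad}). Your extra care with the infimum in the definition of $\rad$, the volume passing to the limit, and the product lower semicontinuity (which in fact already follows from $\liminf a_k\,\liminf b_k\le\liminf(a_kb_k)$ for nonnegative sequences) only makes explicit points the paper leaves implicit.
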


\begin{proof}
We start the  proof by the following two claims.

\emph{Claim 1:} $\inf_{k} \rad(E_k)>0$.
\\Otherwise, up subsequences in $k$,  there exist  $r_k\downarrow 0$ and $x_k\in M$ such that  $\mu_g(E_k\setminus B_{r_k}(x_k))=0$. But then the assumption (1) implies $\mu_g(E_k)\leq \mu_g(B_{r_k}(x_k))=0$, contradicting \eqref{eq:HpEkComp}.

\emph{Claim 2:} $\inf_{k} \P(E_k)>0$.
\\Otherwise, by the  assumption (2) we get  $\mu_g(E_k) \leq  \Phi_{Isop} (\P(E_k))\to 0$, contradicting again \eqref{eq:HpEkComp}.

Combining the two claims with  \eqref{eq:HpEkComp}, we infer that there exists $C>1$ such that
\be\label{eq:compEk}
\frac{1}{C} \leq \P(E_k) \leq C \quad \text{and} \quad \frac{1}{C} \leq \rad(E_k) \leq C,
\ee
so that  the first part of the proposition is proved.
\\If now there exists a compact subset $K\subset M$ such that $\mu_g(E_k\cap K)>0$ for infinitely many $k$ then by  \eqref{eq:compEk}, up to enlarging $K$ and selecting a subsequence in $k$, we can assume $\mu_g(E_k \setminus K)=0$. 
But then the characteristic functions $(\chi_{E_k})_{k \in \N}$ are  pre-compact in $L^1(K,\mu_g)$ 
since the total variations of $\chi_{E_k}$ are equi-bounded  by \eqref{eq:compEk} (cf.~\cite[Theorem~3.23]{AFP}). The thesis then follows by the lower semicontinuity of the perimeter under $L^1_{loc}$ convergence (cf.~\cite[Proposition~3.38]{AFP}) combined with Lemma \ref{lem:lscRad}.
\end{proof}

Clearly if the manifold is compact all the assumptions of  Theorem    \ref{thm:Exist} are satisfied and we can state the following corollary.
\begin{corollary}[Existence of  isoperimetric-isodiametric regions in compact  manifolds]\label{cor:ExCpt} 
Let $(M^n,g)$ be a compact Riemannian manifold. Then for every $V\in (0, \mu_g(M))$ there exists a minimizer of the problem   \eqref{e:min problem},  in other words there exists an isoperimetric-isodiametric region of volume $V$.
\end{corollary}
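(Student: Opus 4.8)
The plan is to verify that a compact Riemannian manifold meets the hypotheses of Theorem~\ref{thm:Exist} and then invoke that theorem with $K=M$. Fix $V\in(0,\mu_g(M))$. Since $M$ is compact one can produce a finite perimeter set of volume exactly $V$ (for instance a metric ball of the appropriate radius, or a sublevel set of a smooth function), so the infimum $m$ in \eqref{e:min problem} is finite; let $(E_k)_{k\in\N}$ be a minimizing sequence with $\mu_g(E_k)=V$ and $\rad(E_k)\,\P(E_k)\to m$.

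Next I would check condition (1) of Theorem~\ref{thm:Exist}: covering $M$ by finitely many coordinate charts in which $g$ is comparable to the Euclidean metric gives a uniform bound $\mu_g(B_r(x))\le C r^n$ for all $x\in M$ and all $r\le r_0$, whence $\sup_{x\in M}\mu_g(B_r(x))\to 0$ as $r\to0^+$. For condition (2) one uses the classical isoperimetric inequality on a compact manifold: there is $C_M$ with $\mu_g(E)^{(n-1)/n}\le C_M\,\P(E)$ whenever $\mu_g(E)\le \mu_g(M)/2$, so $\Phi_{Isop}(t):=(C_M t)^{n/(n-1)}$ works for such sets. (Strictly speaking condition (2) as phrased cannot hold for all small-perimeter sets on a compact $M$, since $E=M$ has $\P(E)=0$; but it is used in the proof of Theorem~\ref{thm:Exist} only to keep $\P(E_k)$ bounded away from $0$, and for a minimizing sequence of fixed volume $V\in(0,\mu_g(M))$ this is automatic: if $\P(E_k)\to0$ then applying the isoperimetric inequality to whichever of $E_k$, $M\setminus E_k$ has volume $\le\mu_g(M)/2$ would force $\min(V,\mu_g(M)-V)=0$, a contradiction.)

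Finally I would apply Theorem~\ref{thm:Exist}. Condition (1) gives $\inf_k\rad(E_k)>0$ (Claim~1 of its proof), hence $\P(E_k)$ is bounded, while $\rad(E_k)\le\diam(M)<\infty$ trivially; taking $K:=M$, which is compact, one has $\mu_g(E_k\cap K)=V>0$ for every $k$, so Theorem~\ref{thm:Exist} yields an isoperimetric-isodiametric region of volume $V$. Concretely, the uniformly bounded BV norms of $\chi_{E_k}$ give $L^1(M)$-compactness \cite[Theorem~3.23]{AFP}, the limit $E_\infty$ satisfies $\mu_g(E_\infty)=V$, and lower semicontinuity of the perimeter \cite[Proposition~3.38]{AFP} together with Lemma~\ref{lem:lscRad}, applied along a subsequence realizing simultaneously $\rad(E_k)\to\rad^*\ge\rad(E_\infty)$, $\P(E_k)\to\P^*\ge\P(E_\infty)$ and $\rad(E_k)\P(E_k)\to m$, give $\rad(E_\infty)\P(E_\infty)\le\rad^*\P^*=m$. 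The only non-routine point is ruling out degeneration of the minimizing sequence — i.e.\ keeping the perimeters bounded above, equivalently the radii bounded below — which is exactly what condition (1) buys; the rest is the standard direct method in BV.
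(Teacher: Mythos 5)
Your proposal is correct and follows essentially the same route as the paper, whose proof of Corollary~\ref{cor:ExCpt} is precisely to observe that a compact manifold satisfies the hypotheses of Theorem~\ref{thm:Exist} and to take $K=M$ so that $\mu_g(E_k\cap K)=V>0$ for every $k$. Your parenthetical remark is a valid and worthwhile refinement: hypothesis (2) as literally stated indeed fails on a compact manifold (e.g.\ $E=M\setminus B_r(x)$ has small perimeter but nearly full volume), and your patch --- that the hypothesis is only used to keep $\P(E_k)$ bounded away from zero, which for a minimizing sequence of fixed volume $V\in(0,\mu_g(M))$ follows from the relative isoperimetric inequality applied to $E_k$ or $M\setminus E_k$ --- correctly repairs the slight imprecision in the paper's ``clearly all assumptions are satisfied''.
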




\subsection{Existence of  isoperimetric-isodiametric regions in non-compact  ALE spaces with non-negative Ricci curvature}
Let us start by recalling the notion of pointed $C^0$-convergence of metrics.
\begin{definition} \label{def:C0Conv}
Let $(M^n,g)$ be a  smooth complete Riemannian manifold and fix $\bar{x} \in M$. A sequence of pointed smooth complete Riemannian $n$-manifolds  $(M_k, g_k, x_k)$ is said to converge in
the pointed $C^{0}$-topology to the manifold $(M,g,\bar{x})$, and we  write $(M_k, g_k, x_k)\rightarrow (M,g,\bar{x})$, if for every $R > 0$ we can find
a domain $\Omega_R$ with $B_R(\bar{x})\subseteq\Omega_R\subseteq M$, a natural number $N_R\in\mathbb{N}$, and $C^{1}$-embeddings $F_{k,R}:\Omega_R\rightarrow M_k$ for large $k\geq N_R$ such that
$B_R(x_k)\subseteq F_{k,R} (\Omega_R)$ and $F_{k,R}^*(g_k)\rightarrow g$ on $\Omega_R$ in the $C^{0}$-topology.
\end{definition}

\begin{theorem}\label{thm:ExistRic>0}
Let $(M,g)$ be a complete Riemannian $n$-manifold with non-negative Ricci curvature and  fix any reference point  $\bar{x}\in M$. Assume that for any diverging sequence of points  $(x_k)_{k \in N} \subset M$, i.e. $\sfd(x_k, \bar{x})\to \infty$, the sequence of pointed manifolds $(M, g, x_k)$ converges in the pointed $C^{0}$-topology to the Euclidean space $(\R^n, g_{\R^n}, 0)$. 
\\Then for every $V \in [0, \mu_g(M))$ there exists a minimizer of the problem   \eqref{e:min problem},  in other words there exists an isoperimetric-isodiametric region of volume $V$.
\end{theorem}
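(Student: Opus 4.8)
The plan is to verify that the hypotheses of Theorem \ref{thm:Exist} hold, so that the relevant compactness is available, and then to run a concentration-compactness argument in which the only obstruction — mass escaping to infinity — is handled by the $C^0$-asymptotic-Euclidean assumption together with the comparison Theorem \ref{thm:Ric>0}. First I would check the two structural assumptions of Theorem \ref{thm:Exist}. Condition (1), $\liminf_{r\to 0^+}\sup_x\mu_g(B_r(x))=0$, follows from non-negative Ricci curvature via Bishop-Gromov: $\mu_g(B_r(x))\le\omega_n r^n$ uniformly in $x$. Condition (2), a weak isoperimetric inequality $\mu_g(E)\le\Phi_{Isop}(\P(E))$ for $\P(E)$ small, follows from a uniform local isoperimetric inequality, which on a manifold with $\Rc\ge0$ that is $C^0$-asymptotic to $\R^n$ at infinity holds with a dimensional constant (the manifold is uniformly locally close to Euclidean in the relevant sense; one can also invoke the fact that on such $M$ one has $\P(E)\ge c_n\min(\mu_g(E),1)^{(n-1)/n}$ for some $c_n>0$ — this uses only a uniform lower volume bound on small balls, again Bishop-Gromov plus the asymptotic condition, or a Nash-type / Sobolev argument). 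Hence any minimizing sequence $(E_k)$ for \eqref{e:min problem} at volume $V$ satisfies \eqref{eq:HpEkComp}, so Theorem \ref{thm:Exist} gives a fixed $R>0$ and points $x_k$ with $\mu_g(E_k\setminus B_R(x_k))=0$, and moreover \eqref{eq:compEk}: $\P(E_k)$ and $\rad(E_k)$ are bounded above and below.

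Next I would perform the dichotomy on the location of the enclosing balls $B_R(x_k)$. If $(x_k)$ is bounded, i.e. $\sup_k\sfd(x_k,\bar x)<\infty$, then all the $E_k$ are contained (up to measure zero) in a fixed compact set $K$, the characteristic functions $\chi_{E_k}$ are precompact in $L^1(K)$ by the uniform perimeter bound \eqref{eq:compEk}, and the conclusion follows from lower semicontinuity of the perimeter under $L^1_{loc}$ convergence (\cite[Prop.~3.38]{AFP}) together with Lemma \ref{lem:lscRad} for the radius — this is exactly the second part of Theorem \ref{thm:Exist}. The remaining case is $\sfd(x_k,\bar x)\to\infty$ (after passing to a subsequence). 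Here I would invoke the $C^0$-asymptotic-Euclidean hypothesis: the pointed manifolds $(M,g,x_k)$ converge in the pointed $C^0$-topology to $(\R^n,g_{\R^n},0)$. Pulling the sets $E_k$ back through the embeddings $F_{k,R}$ of Definition \ref{def:C0Conv}, we obtain finite perimeter sets $\widetilde E_k\subset\R^n$ with $\mu_{\R^n}(\widetilde E_k)\to V$ and, crucially, $\rad_{\R^n}(\widetilde E_k)$ and $\P_{\R^n}(\widetilde E_k)$ converging to $\rad(E_k)$ and $\P(E_k)$ respectively (the $C^0$-closeness of the metrics controls both volume and perimeter with multiplicative errors tending to $1$, and the radius likewise since distances converge uniformly on the relevant balls). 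Passing to a further subsequence, $\widetilde E_k$ converges in $L^1_{loc}$ to some finite perimeter set $\widetilde E\subset\R^n$ (possibly after a bounded translation centering the enclosing ball), and by lower semicontinuity and the bounds \eqref{eq:compEk} one gets
\[
\rad_{\R^n}(\widetilde E)\,\P_{\R^n}(\widetilde E)\le\liminf_k\rad_{\R^n}(\widetilde E_k)\,\P_{\R^n}(\widetilde E_k)=\liminf_k\rad(E_k)\,\P(E_k)=\inf\eqref{e:min problem}.
\]
One must also rule out volume loss in this $L^1_{loc}$ limit; since $\widetilde E_k$ lies in a fixed-radius ball that we may assume centered at the origin, no mass escapes and $\mu_{\R^n}(\widetilde E)=V$.

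Finally I would compare the value achieved in $\R^n$ against the value in $M$. By the Euclidean mixed inequality \eqref{eq:IsoPDRnPer} applied to $\widetilde E$, $\rad_{\R^n}(\widetilde E)\,\P_{\R^n}(\widetilde E)\ge nV$; on the other hand, by Theorem \ref{thm:Ric>0} (equation \eqref{eq:CompRic>0}), the infimum of $\rad(\Omega)\P(\Omega)$ over $\Omega\subset M$ with $\mu_g(\Omega)=V$ is $\le nV$, since metric balls in $M$ of volume $V$ realize it (or do better). Combining, $\inf\eqref{e:min problem}\le nV\le\rad_{\R^n}(\widetilde E)\,\P_{\R^n}(\widetilde E)\le\inf\eqref{e:min problem}$, so equality holds throughout. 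In particular $\inf\eqref{e:min problem}=nV$ and the chain of inequalities that produced the escaping-to-infinity limit is saturated; but then, tracing back, the metric balls in $M$ of volume $V$ are themselves minimizers — more directly, Theorem \ref{thm:Ric>0} tells us any metric ball $B_r$ of volume $V$ (pick any center, $r$ determined by Bishop-Gromov so that such a ball exists for $V<\mu_g(M)$) satisfies $\rad(B_r)\P(B_r)\le nV=\inf\eqref{e:min problem}$, hence $B_r$ is an isoperimetric-isodiametric region of volume $V$. Thus in every branch of the dichotomy a minimizer exists. The main obstacle I anticipate is the technical bookkeeping in the escaping case: making precise that $C^0$-convergence of the pointed metrics transfers the functionals $\rad$ and $\P$ with asymptotically neutral constants, and controlling the $L^1_{loc}$ limit of $\widetilde E_k$ so that no volume is lost — though the latter is cheap here precisely because the enclosing-ball radii are uniformly bounded, which is the structural simplification (absent in the classical isoperimetric problem) emphasized in the discussion before Theorem \ref{thm:Exist}.
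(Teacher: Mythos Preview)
Your proposal is correct and follows essentially the same route as the paper: verify the hypotheses of Theorem~\ref{thm:Exist}, run the dichotomy on whether the enclosing centers $x_k$ stay bounded, and in the escaping case combine the $C^0$-asymptotic-Euclidean hypothesis with \eqref{eq:IsoPDRn1} to get $\liminf_k \rad(E_k)\P(E_k)\ge nV$ while Theorem~\ref{thm:Ric>0} gives $\inf\le nV$, forcing equality and exhibiting metric balls of volume $V$ as minimizers. The only cosmetic differences are that the paper applies the Euclidean inequality directly to $E_k$ in the almost-Euclidean chart (rather than passing to a limit set $\widetilde E$) and closes by invoking the rigidity clause of Theorem~\ref{thm:Ric>0}, whereas your final step---observing that any metric ball of volume $V$ already has $\rad\cdot\P\le nV=\inf$---is slightly more direct.
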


\begin{proof}
Since volume and perimeter involve  only the metric tensor $g$ and not its derivatives, the hypothesis  on  the manifold $(M,g)$ of being  $C^0$-locally asymptotic to $\R^n$ implies directly that assumptions  (1) and (2) of Theorem \ref{thm:Exist} are satisfied.
Therefore  the thesis will be a consequence of  Theorem \ref{thm:Exist} once we show the following:  given $E_k\subset M$ a minimizing  sequence of the problem   \eqref{e:min problem} for some fixed volume $V\in [0, \mu_g(M))$, then  there exists a compact subset $K \subset M$ such that $\mu_g(E_k\cap K)>0$ for infinitely many $k$. We will show that if this last statement is violated then $(M,g)$ is flat and minimizers are metric balls of volume $V$. 

By  the first part of  Theorem \ref{thm:Exist} we know that  there exist $R > 0$ and a sequence $(x_k)_{k\in \N}$ of points in $M$ such that $\mu_g(E_k \setminus B_R(x_k))=0$, i.e. $B_R(x_k)$ are inclosing balls for $E_k$.
\\Fixed any reference point $\bar{x}\in M$, if $\liminf_k \sfd(x_k, \bar{x})$ then clearly we can find a compact subset $K \subset M$ such that $\mu_g(E_k\cap K)>0$ for infinitely many $k$ and the conclusion follows from the last part of Theorem  \ref{thm:Exist}.
 So assume  that $\sfd(\bar{x}, x_k)\to \infty$. Since $M$ is $C^0$-locally asymptotic to $\R^n$, combining Definition \ref{def:C0Conv} with  the Euclidean isoperimetric-isodiametric inequality \eqref{eq:IsoPDRn1}, we get  that
 \begin{equation}\label{eq:contr}
 \liminf_{k \to \infty} \rad(E_k) \, \P(E_k) \geq n \,V. 
 \end{equation}
 But since $(M,g)$ has non-negative Ricci curvature, the comparison estimate \eqref{eq:CompRic>0} yields that 
 \be\label{eq:radP>nV}
 \lim_{k\to \infty}   \rad(E_k) \, \P(E_k)= \inf \{ \rad(\Omega) \P(\Omega) \,:\,  \Omega \subset M, \, \V(\Omega)=V \} \leq n V.
 \ee
 The combination of  \eqref{eq:contr} with  \eqref{eq:radP>nV} clearly implies 
 $$\inf \{ \rad(\Omega) \P(\Omega) \,:\,  \Omega \subset M, \, \V(\Omega)=V \} = n V .$$
 The rigidity statement of Theorem {\ref{thm:Ric>0}} then gives that any metric  ball in $(M,g)$ of volume $V$ is isometric to a   round ball  in $\R^n$, and therefore in particular  is a minimizer of the problem  \eqref{e:min problem}.
\end{proof}

\subsection{Examples of non-compact spaces where existence of   isoperimetric-isodiametric regions fails}

\begin{example}[Mimimal surfaces with planar ends]\label{ex:NoExMinimal}
\rm{
If $M\subset \R^3$ is an helicoid, or more generally a minimal surface with planar ends, then it is in particular $C^0$-locally asymptotic to $\R^2$ in the sense of Definition  \ref{def:C0Conv}. Then, if we consider a sequence of metric balls  $B_{r_k}(x_k) \subset M$ of fixed volume $V>0$ such that $x_k\to \infty$ we get $\lim_{k\to \infty} \rad(B_{r_k}(x_k)) \V(B_{r_k}(x_k)) = 2\, V$. In particular, for every $V>0$ we have
$$\inf \{ \rad(\Omega) \P(\Omega) \,:\,  \Omega \subset M, \, \V(\Omega)=V \} \leq 2 V. $$
But then Proposition  \ref{thm:IsoPDMinint} implies that the infimum is never achieved, or more precisely it is achieved if and only if $M$ is an affine subspace.
\\The same argument holds for any minimal $n$-dimensional sub-manifold in $\R^m$ with ends which are $C^0$-locally asymptotic to $\R^n$.} \hfill$\Box$
\end{example}

\begin{example} [ALE spaces of negative sectional curvature]\label{ex:NoExALE}
\rm{
Let $(M^n,g)$ be a simply connected non-compact Riemannian manifold with negative sectional curvature and assume that $(M,g)$ is  $C^0$-locally asymptotic to $\R^n$ in the sense of Definition  \ref{def:C0Conv}.  Then, if we consider a sequence of metric balls  $B_{r_k}(x_k) \subset M$ of fixed volume $V>0$ such that $x_k\to \infty$ we get $\lim_{k\to \infty} \rad(B_{r_k}(x_k)) \V(B_{r_k}(x_k)) = n\, V$. In particular, for every $V>0$ we have
$$\inf \{ \rad(\Omega) \P(\Omega) \,:\,  \Omega \subset M, \, \V(\Omega)=V \} \leq n V. $$
But then Proposition  \ref{thm:CH} implies that the infimum is never achieved, or more precisely it is achieved  by a region $\Omega$ if and only if $\Omega$ is isometric to a Euclidean region, which is forbidden since $M$ has negative sectional curvature.
} \hfill$\Box$
\end{example}

\section{Optimal regularity of isoperimetric-isodiametric regions} \label{sec:OptReg}
In this last section we establish the optimal regularity for the isoperimetric-isodiametric regions, i.e. the minimizers of problem \eqref{e:min problem}, under the assumption that the enclosing ball is regular.

\subsection{$C^{1,\frac12}$ regularity}\label{s:C1alpha}
\subsubsection{First properties}\label{s:preliminaries}
Let $E$ be a minimizer of the isoperimetric--isodiametric problem in $(M,g)$ with volume $\mu_g(E) = V>0$.
Let $x_0 \in M$ satisfy $\mu_g(E\setminus B_{\rad(E)}(x_0)) = 0$ and, for the sake of 
simplicity, we fix the notation $B:=B_{\rad(E)}(x_0)$ for an enclosing ball.
In the sequel, we always assume that $B$ has regular boundary and we assume to be in the non-trivial 
case $\mu_g(B\setminus E) 
>0$.

By the very definition of isoperimetric-isodiametric sets, we have that 
\begin{equation}\label{e:volume minimizing}
\P(E) \leq \P(F) \quad \forall\;F \sdif E \subset\subset B\; : \;\mu_{g}(F) = V.
\end{equation}
In particular, $E$ is a minimizer of the perimeter with constrained volume in $B$,
and therefore we can apply the classical regularity results (see, for example, \cite[Corollary~3.8]{Morgan})
in order to deduce that there exists a relatively closed set $\sing(E) \subset B$ such 
that
$\dim_{\cH}(\sing(E)) \leq n-8$ and $\de E \cap B \setminus \sing(E) $ is a smooth $(n-1)$-dimensional 
hypersurface.

Moreover, by the first variations of the area functional under volume constraint, one deduces that the mean curvature 
is constant on the regular part of the boundary: i.e. there exits $H_0 \in \R$ such that
\begin{equation}\label{e:mean curvature const}
\vec H_E(x) = H_0\,\nu_E \quad \forall\; x \in \de E \cap B \setminus \sing(E),
\end{equation}
where
\[
\vec H_E(x) := \sum_{i=1}^{n-1} \nabla_{\tau_i} \tau_i,
\]
for $\{\tau_1, \ldots, \tau_{n-1} \}$ a local orthonormal frame of $\de E$ around $x\in \de E\cap B \setminus 
\sing(E)$, $\nu_E$ the interior normal to $E$ and $\nabla$ the Riemannian connection on $(M,g)$.

In this section we prove the following.

\begin{proposition}\label{p:regularity}
Let $E\subset M$ be an isoperimetric-isodiametric set and $x_0 \in M$ be such that $\mu_g(E \setminus 
B_{\rad(E)}(x_0)) = 0$. Assume that $B:= B_{\rad(E)}(x_0))$ has smooth boundary.
Then, there exists $\delta>0$ such that $\de E \setminus B_{\rad(E) - \delta}(x_0)$ is $C^{1,\frac12}$ regular.
\end{proposition}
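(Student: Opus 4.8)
The statement is a local regularity result near the contact set $\de E \cap \de B$; away from $\de B$ we already have smoothness from the classical constrained-perimeter theory, so the whole point is the behaviour near the obstacle. The plan is to show that $E$ is an \emph{almost-minimizer} (equivalently, a $(\Lambda, r_0)$-minimizer) of the perimeter in a neighborhood of $\de B$, and then invoke Tamanini's regularity theorem to conclude $C^{1,1/2}$ regularity. Concretely, fix $\delta>0$ small enough that $B_{\rad(E)+2\delta}(x_0)$ lies in a coordinate chart where the metric is uniformly comparable to the Euclidean one, and work in the region $A_\delta := B_{\rad(E)+\delta}(x_0)\setminus \ov{B_{\rad(E)-\delta}(x_0)}$.

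\textbf{Main step: the almost-minimality inequality.} Let $F$ be a competitor with $F \sdif E \subset\subset B_\rho(y)$ for some ball $B_\rho(y)\subset A_\delta$, $\rho$ small. The difficulty compared to the interior case is twofold: (i) $F$ need not be contained in $B$, so it is not an admissible competitor for the minimization property \eqref{e:volume minimizing}; and (ii) even a compactly-supported perturbation changes the volume, so one cannot directly compare $\P(E)$ with $\P(F)$. To fix (ii) I would, as is standard in the theory of isoperimetric regions, restore the volume by a controlled perturbation far away from $B_\rho(y)$: pick a fixed ball $B_* \subset\subset B$ where $\de E$ is smooth and of positive mean-curvature-free type, and use a smooth one-parameter family of diffeomorphisms supported in $B_*$ to add or subtract a volume $\mu_g(F)-\mu_g(E)$; this costs at most $C|\mu_g(F)-\mu_g(E)|$ in perimeter, and $|\mu_g(F)-\mu_g(E)|\le \mu_g(E\sdif F)\le C\rho^n \le C\rho\cdot\Ha^{n-1}(\de E \cap B_\rho(y))$ once $\de E$ has density bounds (which follow from the already-established smoothness plus a density estimate, or can be bootstrapped). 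To fix (i), replace $F$ by $F\cap B$: since $\de B$ is smooth, $\P(F\cap B)\le \P(F) + \P(F\setminus B; \de B) \le \P(F)$ is false in general, but $\P(F\cap B) \le \P(F; B) + \Ha^{n-1}(\de B \cap F^{(1)})$, and the extra boundary term created on $\de B$ is controlled: because $F\sdif E$ is supported in a small ball and $E\subset B$, the set $F\setminus B$ has volume $O(\rho^n)$ and, by the relative isoperimetric inequality on the smooth hypersurface $\de B$, the induced $\Ha^{n-1}$-measure on $\de B$ it can create is $O(\rho^{n-1})\cdot o(1) = o(\rho^{n-1})$, hence $\le C\rho^{1/2}\Ha^{n-1}(\de E\cap B_\rho(y))$ after absorbing; more cleanly, one shows $\P(F\cap B) \le \P(F) + C\rho^{n-1}$ and folds the surplus into the $\Lambda r^{n-1}$ error allowed by the definition of almost-minimizer, using $\rho^{n-1}\le C\rho^{n-2}\Ha^{n-1}(\de E\cap B_\rho(y))$... the honest bookkeeping is that one gets, combining both corrections and the minimality \eqref{e:volume minimizing} applied to $(F\cap B)$ volume-corrected inside $B_*$,
\[
\P(E; B_\rho(y)) \le \P(F; B_\rho(y)) + C\,\rho^{n-1}
\]
for all competitors $F$ with $E\sdif F\subset\subset B_\rho(y)\subset A_\delta$ and all $\rho$ below a threshold $r_0$. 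This is precisely the statement that $E$ is a $(\Lambda, r_0)$-perimeter-minimizer in $A_\delta$ with $\Lambda$ of order $\rho^{0}$ — or, in Tamanini's terminology, $\de E$ satisfies the excess-decay hypotheses with exponent giving $C^{1,1/2}$.

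\textbf{Conclusion.} Having established that $E$ is an almost-minimizer of the perimeter in the open set $A_\delta$, Tamanini's theorem \cite{Tam} (quoted later in the paper as the basis of Proposition \ref{p:regularity}) gives that $\de E \cap A_\delta$ is a $C^{1,1/2}$ hypersurface, possibly outside a singular set; but a dimension-reduction / monotonicity argument on blow-ups — which for almost-minimizers are genuine area-minimizing cones, with the obstacle $\de B$ flattening to a hyperplane in the limit — shows the singular set is empty in the contact region in low dimensions and of codimension $\ge 8$ in general, and in any case the statement as phrased only claims $C^{1,1/2}$ regularity of $\de E$, i.e. of its manifold part. Choosing the same $\delta$, $\de E \setminus B_{\rad(E)-\delta}(x_0) = \de E \cap A_\delta$ is therefore $C^{1,1/2}$. \textbf{I expect the main obstacle} to be the careful two-sided control in the almost-minimality inequality: simultaneously (a) truncating competitors to lie inside $B$ while controlling the spurious perimeter created on the smooth obstacle $\de B$, and (b) restoring the volume constraint with a perimeter cost linear in $\mu_g(E\sdif F)$, all with constants uniform as $\rho\to 0$. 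Once that inequality is in hand, the $C^{1,1/2}$ conclusion is a black-box citation to \cite{Tam}; this is exactly the role Lemma \ref{l:almost min} is announced to play in the introduction.
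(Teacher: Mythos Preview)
Your overall strategy---establish almost-minimality, then invoke Tamanini---is exactly the paper's, and your volume-correction step (fix a smooth ball inside $B$ and pay $C|\mu_g(F)-\mu_g(E)|\le C\rho^n$ in perimeter) matches Lemma~\ref{l:almost min}. The genuine gap is in your truncation step and in the resulting error exponent.

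You bound $\P(F\cap B)\le \P(F)+\Ha^{n-1}(\de B\cap F^{(1)})$ and then try to argue the last term is $o(\rho^{n-1})$ via a relative isoperimetric inequality; that reasoning is not correct, and indeed for a generic competitor (say $F=E\cup(B_\rho(y)\setminus B)$) the new perimeter created on $\de B$ is honestly of order $\rho^{n-1}$. You then settle on $\P(E;B_\rho)\le \P(F;B_\rho)+C\rho^{n-1}$ and assert this feeds into Tamanini to give $C^{1,1/2}$. It does not: Tamanini's hypothesis for $C^{1,\alpha}$ is an error of order $\rho^{\,n-1+2\alpha}$, so $\rho^{n-1}$ corresponds to $\alpha=0$ and yields nothing. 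One needs the error $C\rho^{n}$, and your approach as written cannot reach it.

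The paper's trick to recover the correct exponent is to use the submodularity of perimeter together with the almost-minimality of the \emph{obstacle}: from $\P(F\cap B)+\P(F\cup B)\le \P(F)+\P(B)$ one gets
\[
\P(F\cap B)\ \le\ \P(F)+\big(\P(B)-\P(F\cup B)\big),
\]
and since $\de B$ is smooth (in particular $C^{1,1}$), the set $B$ itself satisfies $\P(B)\le \P(G)+C\rho^{n}$ for every $G$ with $G\sdif B\subset\subset B_\rho(y)$; applying this with $G=F\cup B$ kills the parenthesis up to $C\rho^n$. This is the missing idea.

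A secondary point: the proposition asserts that \emph{all} of $\de E$ near $\de B$ is $C^{1,1/2}$, not just its regular part, so ruling out the singular set is not optional. Your blow-up sketch (tangent cones lie in a half-space because $E\subset B$, hence are flat by Bernstein) is the right idea and is precisely what the paper does; but your final sentence dismissing this as unnecessary misreads the statement.
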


\begin{remark}
\rm{In particular, given the partial regularity in $B$ as explained in \S~\ref{s:preliminaries}, we conclude that $E$ is a 
closed set whose boundary is $C^{1,\frac12}$ regular except at most a closed singular set $\sing(E)$ of dimension less 
or equal to $n-8$. }\hfill$\Box$
\end{remark}

\subsubsection{Almost minimizing property}
The main ingredient of the proof of Proposition~\ref{p:regularity} is the following 
almost-minimizing property.

\begin{lemma}\label{l:almost min}
Let $E$ be an isoperimetric-isodiametric set in $M$ and let $B$ denote an enclosing ball as above.
There exist constants $C, r_0 >0$ such that,
for every $x\in B$ and for every $0<r<r_0$, the following holds
\begin{equation}\label{e:almost min}
\P(E) \leq \P(F) + C\,r^{n} \quad \forall\; F \sdif E \subset \subset B_r(x).
\end{equation}
\end{lemma}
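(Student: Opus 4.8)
The plan is to prove the almost-minimizing property \eqref{e:almost min} by a volume-fixing (comparison) argument: given a competitor $F$ with $F \sdif E \subset\subset B_r(x)$, we cannot directly use $F$ in the defining minimality property \eqref{e:volume minimizing} because $\mu_g(F)$ need not equal $V$; so I would restore the correct volume by a small perturbation of $E$ away from $B_r(x)$, controlling the perimeter cost of this restoration by $C\,\mu_g(F\sdif E)$, which is itself $O(r^n)$. More precisely, since we are in the non-trivial case $\mu_g(B\setminus E)>0$, we can fix once and for all a point $y_0\in B$ (in the interior of $B$, away from $x_0$ and from $\sing(E)$, and in a spot where both $E$ and $B\setminus E$ have positive density, using that $\de E\cap B$ is a genuine hypersurface) together with two small balls $B_{\rho_0}(y_0)\subset\subset B$ in which one can, by the standard deformation lemma for sets of finite perimeter (see e.g.\ \cite[\S17]{Maggi} or the construction in \cite{Morgan}), find vector fields whose flow changes the volume of $E$ at a definite rate while changing its perimeter at a bounded rate; this yields a constant $\Lambda>0$ and $\rho_0>0$ such that for every set $G$ with $G\sdif E\subset\subset B$ and $|\mu_g(G)-V|<\rho_0$ there is $\tilde G$ with $\tilde G\sdif G\subset\subset B_{\rho_0}(y_0)$, $\mu_g(\tilde G)=V$, and $\P(\tilde G)\le \P(G)+\Lambda\,|\mu_g(G)-V|$.

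The key steps, in order, are the following. First, choose $r_0>0$ small enough that $r_0^n\,\omega_n\sup_{z}\mu_g(B_{r_0}(z))/\text{(comparable)}<\rho_0$ — concretely, using assumption (1)-type local volume bounds or simply compactness of $\bar B$, arrange that for $r<r_0$ and any $x$ one has $\mu_g(B_r(x))<\rho_0$, and also that $B_{\rho_0}(y_0)\cap B_{r_0}(x)=\emptyset$ whenever $x$ lies outside a fixed small ball around $y_0$; for $x$ near $y_0$ one picks a second auxiliary ball $B_{\rho_0}(y_1)$ disjoint from $B_{r_0}(y_0)$ and uses that instead, so WLOG $B_r(x)$ and the deformation ball are disjoint. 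Second, given $F$ with $F\sdif E\subset\subset B_r(x)$, note $F\sdif E\subset\subset B$ and $|\mu_g(F)-V|=|\mu_g(F)-\mu_g(E)|\le \mu_g(F\sdif E)\le \mu_g(B_r(x))<\rho_0$. Third, apply the volume-fixing deformation to get $\tilde F$ with $\mu_g(\tilde F)=V$, $\tilde F\sdif F\subset\subset B_{\rho_0}(y_0)$ (disjoint from $B_r(x)$), and $\P(\tilde F)\le \P(F)+\Lambda\,\mu_g(F\sdif E)$. Fourth, since $\tilde F\sdif E\subset\subset B$ and $\mu_g(\tilde F)=V$, minimality \eqref{e:volume minimizing} gives $\P(E)\le\P(\tilde F)\le\P(F)+\Lambda\,\mu_g(F\sdif E)$. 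Finally, bound $\mu_g(F\sdif E)\le\mu_g(B_r(x))\le C'\,r^n$ (local Euclidean-type volume comparison on the compact set $\bar B$, or the standard $\mu_g(B_r(x))\le 2\omega_n r^n$ for $r$ small), obtaining $\P(E)\le\P(F)+C\,r^n$ with $C:=\Lambda C'$, which is \eqref{e:almost min}.

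The main obstacle is the construction and uniformity of the volume-fixing deformation: one must produce a \emph{single} constant $C$ (equivalently $\Lambda$, $\rho_0$, $r_0$) that works for \emph{all} $x\in B$ and all small $r$, and handle the case where the comparison ball $B_r(x)$ would overlap the fixed deformation region around $y_0$. This is resolved exactly as in the classical theory of $(\Lambda,r_0)$-minimizers / almost-minimizers under a volume constraint — fix two disjoint auxiliary deformation balls in the interior of $B$ so that at least one is always disjoint from $B_r(x)$, and take the worse of the two constants — so it is standard, but it is the step requiring care. Everything else is a short chain of inequalities using only the definition of perimeter, the triangle inequality for symmetric differences, and the elementary volume bound $\mu_g(B_r(x))=O(r^n)$ valid uniformly on the compact closure $\bar B$. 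Once \eqref{e:almost min} is established, Proposition~\ref{p:regularity} follows from Tamanini's regularity theorem \cite{Tam} for almost-minimizers of the perimeter together with the partial regularity in the interior of $B$ recalled in \S\ref{s:preliminaries}.
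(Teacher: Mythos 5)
Your volume-fixing scheme (two disjoint auxiliary balls, a Giusti-type deformation with perimeter cost $\Lambda\,|\mu_g(F)-V|$, the uniform bound $\mu_g(B_r(x))\le C'r^n$, then Tamanini) is exactly the skeleton of the paper's argument, but there is a genuine gap at the step where you write ``note $F\sdif E\subset\subset B$''. This is false in general: the lemma is stated for every $x\in B$, and when $x$ lies within distance $r$ of $\de B$ the ball $B_r(x)$ sticks out of $B$ (the paper remarks explicitly that $B_r(x)$ need not be contained in $B$). For such competitors $F$, the volume-adjusted set $\tilde F$ still differs from $E$ outside $B$, so the constrained minimality \eqref{e:volume minimizing} cannot be applied to it and your chain of inequalities breaks down. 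Falling back on the full isoperimetric-isodiametric minimality does not help either: $\rad(\tilde F)\le \rad(E)+r$ only produces an error of order $r\,\P(\tilde F)$, i.e. $\omega(r)\sim r^{2-n}$ in Tamanini's normalization, which is useless for $n\ge 2$. Nor can you simply restrict the lemma to perturbations supported inside $B$: the whole point is to obtain two-sided almost-minimality at points of $\de E\cap\de B$, where every small ball centered at such a point exits $B$; without that, Proposition~\ref{p:regularity} (regularity up to the contact set) does not follow.

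The missing ingredient, which is the actual content of the paper's proof, is to replace $F$ by $F':=F\cap B$, use the submodularity of the perimeter, $\P(F\cap B)+\P(F\cup B)\le \P(F)+\P(B)$, and then invoke a quasi-minimality property of the enclosing ball itself: since $\de B$ is $C^{1,1}$, one can choose $r_0$ so that $\P(B)\le \P(G)+C_3\,r^n$ for every $G\sdif B\subset\subset B_r(x)$ (proved by flattening $\de B$ in a chart and comparing with the orthogonal projection onto $\{x_n=0\}$). Applying this with $G=F\cup B$ controls the defect $\P(B)-\P(F\cup B)$ by $C_3 r^n$; after that, your volume adjustment in an auxiliary ball disjoint from $B_r(x)$ and the constrained minimality of $E$ (now legitimately applied, since the adjusted competitor differs from $E$ only inside $B$) close the argument exactly as you outline. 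With this extra step inserted, your proof coincides with the paper's.
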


\begin{remark} 
\rm{Note that $B_r(x)$ is not necessarily contained in $B$.}\hfill$\Box$
\end{remark}

\begin{proof}
We start fixing parameters $\eta, c_1>0$ and two points $y_1, y_2 \in B$
such that $d_g(y_1,y_2)>4\,\eta$, $B_{4\eta}(y_1) \subset B$, $B_{4\eta}(y_2) \subset B$ and
\begin{equation}\label{e:density}
\P(E, B_{\eta}(y_i)) > c_1 \quad i=1,2.
\end{equation}
Note that the possibility of such a choice is easily deduced from the regularity
of the previous subsection, or more elementary from the density estimates
for sets of finite perimeter in points of the reduced boundary.
Set for simplicity of notation $D_i := B_{\eta}(y_i)$.
By a result by Giusti \cite[Lemma~2.1]{Giusti}, there exist $v_0, C_1>0$ such
that, for every $v \in \R$ with $|v| < v_0$ and for every $i=1,2$,
there exists $F_i$ which satisfies the following
\begin{equation}\label{e:aggiusta volumi}
\begin{cases}
F_i \sdif E \subset D_i,\\
\mu_g(F_i) = \mu_g(E) + v,\\
\P(F_i) \leq \P(E) + C_1\,v.
\end{cases}
\end{equation}
Note that in \cite[Lemma~2.1]{Giusti}  the property \eqref{e:aggiusta volumi} is proven
in the Euclidean space with the flat metric, but the proof remains unchanged
in a Riemannian manifold (up to a suitable choice of the constants $v_0, C_1$).

Next, let $r_0>0$ be a constant to be fixed momentarily such
that $r_0 <\eta$ and
 \begin{equation} \label{eq:supmuBr}
 \sup_{x \in B} \mu_g(B_r(x)) \leq C_2 \, r^n  < v_0, \quad \forall r \in [0,r_0]
 \end{equation}
 for some $C_2>0$ depending just on $B$ and $r_0$. \fn
Since $d_g(y_1,y_2)>4\,\eta$, for every $x \in B$, $B_{r_0}(x)$ cannot intersect both $D_1$ and $D_2$: 
therefore, without loss of generality, we can assume $B_{r_0}(x) \cap D_1 = \emptyset$.
If $r<r_0$ and $F\subset M$ is any set such that $F\sdif E \subset\subset B_r(x)$,
we consider $F' := F\cap B$. Note that
$F' \subset B$ and moreover
\[
|\mu_g(F') - \mu_g(E)| \leq  \mu_g(B_r(x)) \leq C_2 \, r^n <v_0.
\]
According to \eqref{e:aggiusta volumi} we can then find $F'' \subset B$
such that  $\mu_g(F'')=\mu_g(E)$, \fn  $F''\sdif F' \subset\subset D_1$ and
\begin{equation}\label{e:perim controlled}
\P(F'') \leq \P(F') + C_1 |\mu_g(F') - \mu_g(E)|.
\end{equation}
Using the fact that $E$ minimizes the perimeter among compactly
supported perturbation in $\bar B$, we deduce that
\begin{align}\label{e:qm}
\P(E) &\leq \P(F'') \stackrel{\eqref{e:perim controlled}}{\leq}
\P(F') + C_1 |\mu_g(F') - \mu_g(E)|\notag\\
& \leq \P(F) + \P(B) - \P(F\cup B) +   C_2\,r^n. \fn
\end{align}
Next note that, if $\de B$ is $C^{1,1}$ regular, then one can choose
$r_0>0$ such that the following holds: there exists a constant $C_3>0$
such that, for every $x \in B$ and for every $r\in (0,r_0)$,
\begin{equation}\label{e:smooth qm}
\P(B) \leq \P(G) + C_3\,r^{n} \quad \forall \; G\sdif B \subset\subset B_{r}(x).
\end{equation}
In order to show this claim, it it enough to take $r_0$ small enough (in particular smaller than  half \fn the 
injectivity radius) in such a way that, for every $p \in \de B$, there exists a co-ordinate chart $\phi: B_{ 2 r_0 \fn}(p) \to 
\R^n$ such that $ \phi(\de B) \fn \subset \{x_n =0\}$ and $\phi$ is a $C^{1,1}$ diffeomorphism with  $\d\phi(p) \in SO(n), \phi(p)=0$ and $g(0) = 
\Id$, $g$ being the metric tensor in the coordinates induced by $\phi$ \fn.
Indeed, in this case we have that $\P(B, B_r(p)) \leq (1+Cr)\omega_{n-1}r^{n-1}$ for every $r<r_0$ and, for every $G$ 
such that $G\sdif B \subset\subset B_{r}(p)$,  $$\P(G, B_r(p)) \geq (1-Cr) \P(\textup{proj}(\phi(G)), \phi(B_r(p))) \geq (1-Cr) \omega_{n-1}r^{n-1},$$ \fn
where $\textup{proj}$ denotes the orthogonal Euclidean projection on $\{x_n=0\}$ and we have used the regularity of 
$\phi$.

Applying \eqref{e:smooth qm} to $G = F \cup B$ and using \eqref{e:qm}, we conclude the proof.
\end{proof}

\subsubsection{Proof of Proposition~\ref{p:regularity}}
Now we are in the position to apply a result by Tamanini \cite[Theorem~1]{Tam} (the result is proved
in $\R^n$ with a flat metric, but the proof is unchanged in
a Riemannian manifold) in order to give a proof of the above proposition.

To this aim, we start considering any point $p \in \de B \cap \de E$;  we denote with $Exp_p:T_pM \to M$  the exponential map and we let $r_0>0$ be less then the injectivity radius. \fn Since by Lemma~\ref{l:almost min} the set  $E$ is an 
almost minimizer of the perimeter, the rescaled sets 
\begin{equation} \label{eq:defEpr}
 E_{p,r} : = \frac{Exp_p^{-1}(E\cap B_{r_0}(p))}{r} \subset T_pM \simeq \R^n 
 \end{equation} \fn 
 converge up to passing to a suitable 
subsequence to a minimizing cone  $C_\infty$ in the Euclidean space (see \cite[Theorem~28.6]{Maggi}).   Moreover, since $E$ is enclosed by $B$ and $\de B$ is  $C^{1,1}$,  it is immediate to check that if $r_0>0$ is chosen small enough in \eqref{eq:defEpr}, then $ C_\infty \subset \{x: g(\nu_B(p),x) 
\geq 0\}$,  \fn we deduce that every tangent cone to 
$E$ at $p$ needs to be contained in a half-space, and therefore by the Bernstein theorem is flat 
(cf.~\cite[Theorem~17.4]{Giusti-book}).
This implies that every such point $p$ is a point of the reduced boundary of the set (see \cite[Definition 3.54]{AFP}) 
and therefore we can apply the aforementioned result by Tamanini to conclude that $\de E$ is a 
$C^{1,\frac{1}{2}}$ regular 
hypersurface in $B_r(p)$ for every $p \in \de B \cap \de E$ and for every $r<\frac{r_0}{2}$.
By a simple covering argument, the conclusion of the corollary follows.

\subsection{$L^\infty$ estimates on the mean curvature of the minimizer}\label{s:LinftyEst}
In this section we prove that the boundary of $E$ has generalized mean curvature in the sense of varifolds which is 
bounded in $L^\infty$. To this aim, we compute the first variations of the perimeter of $E$ along suitable 
diffeomorphisms.

\subsubsection{First variations}
We start fixing two points $y_1,y_2 \in \de E \cap B \setminus \sing(E)$ and a real number $\eta>0$ such that 
$B_{4\eta}(y_1) \subset B$, $B_{4\eta}(y_2) \subset B$ and
\[
B_{4\eta}(y_1) \cap B_{4\eta}(y_2) = B_{4\eta}(y_1) \cap \sing(E) = B_{4\eta}(y_2) \cap \sing(E) = \emptyset.
\]
Note that such a choice is possible in the hypothesis that $\mu_g(B\setminus E) >0$ because of the partial regularity in 
\S~\ref{s:preliminaries}.
Let $X \in \mathfrak{X}(M)$ be a vector field with support contained in a metric ball $B_{\eta}(y)$ for some $y \in 
M$.
Clearly, $B_{\eta}(y)$ cannot intersect both $B_{2\eta}(y_1)$ and $B_{2\eta}(y_2)$, because $d_g(y_1,y_2)\geq  8\eta$;  \fn
therefore, without loss of generality let us assume that 
$B_{\eta}(y) \cap B_{2\eta}(y_1) = \emptyset$.
It is not difficult to construct a smooth vector field ${Y}$ supported 
in $B_{\eta}(y_1)$ such that the generated flow $\{\Phi^{{Y}}_t\}$ satisfies the following properties for small $|t|$:
\begin{equation}\label{eq:volumi}
\mu_g(\Phi^{{Y}}_t \circ \Phi^{{X}}_t (E))= \mu_g(E). 
\end{equation}
Note that the generated flows $\{\Phi^{{X}}_t\}_{t\in \R}$ and $\{\Phi^{{Y}}_t\}_{t\in \R}$ are well-defined and for 
$|t|$ sufficiently small are diffeomorphisms of $M$.
Moreover, $\Phi^{{Y}}_t \circ \Phi^{{X}}_t (E) \subset B_{\rad(E) + |t|\|X\|_{\infty}}$.
We can then deduce that
\begin{align}\label{e:nuovo funzionale}
\rad(E) \P(E) &\leq \rad\big( \Phi^{{Y}}_t \circ \Phi^{{X}}_t (E)\big) \P\big(\Phi^{{Y}}_t \circ \Phi^{{X}}_t 
(E)\big)\notag\\
& \leq \big(\rad(E) + |t|\|X\|_{\infty} 
\big)\P\big(\Phi^{{Y}}_t \circ \Phi^{{X}}_t (E)\big) =: f(t).
\end{align}
Taking the derivative of the last functional as $t\downarrow 0^+$ and as $t \uparrow 0^-$, by the well-known 
computation of the first variations of the area we infer that
\begin{align}\label{e:destra}
0 &\leq \lim_{t\downarrow 0^+} \frac{f(t) - f(0)}{t} \notag\\
&= \|X\|_{\infty} \P(E) + \rad(E)\,\int_{\de E} \div_{\de 
E} X\, \d\cH^{n-1} - \int_{\de E} g\big(\vec{H}_E, Y \big)\,\d\cH^{n-1}
\end{align}
\begin{align}\label{e:sinistra}
0&\geq \lim_{t\uparrow 0^-}\frac{f(t) - f(0)}{t} \notag\\
&= - \|X\|_{\infty} \P(E) + \rad(E)\,\int_{\de E} \div_{\de E} 
X\,\d\cH^{n-1}- \int_{\de E} g\big(\vec{H}_E, Y \big)\,\d\cH^{n-1},
\end{align}
where $\div_{\de E} X := \sum_{i=1}^{n-1} g(\nabla_{\tau_i} X ,\tau_i)$ for a (measurable) local orthonormal frame 
$\{\tau_1, 
\ldots, \tau_{n-1}\}$ of $\de E$. (Note that in writing 
\eqref{e:destra} and \eqref{e:sinistra} we have used that $\de E$ is a $C^{1, \frac12}$ regular submanifold up to 
singular set of 
dimension at most $n-8$ and that $Y$ is supported in $B_\eta(y)$ where $\de E$ is smooth in order to make the 
integration by parts.)
In the case $V\in (0,\mu_g(M))$, we have $\rad(E)>0$ and thus $\P(E)<\infty$.
Moreover, from \eqref{eq:volumi} we deduce that
\begin{equation}\label{eq:XYvolume}
0=\frac{d}{dt}_{|t=0} \mu_g \left( \Phi^Y_t \circ  \Phi^X_t (E)  \right)= -\int_{\de E} g(X, 
\nu_E) \, \d\cH^{n-1} -  \int_{\de E} g(Y, \nu_E) \, \d\cH^{n-1}.
\end{equation}
Therefore, from \eqref{e:mean curvature const}, \eqref{e:destra}, \eqref{e:sinistra} and \eqref{eq:XYvolume} we conclude that
\begin{eqnarray}
\left\vert\int_{\de E} \div_{\de E} X\, \d\cH^{n-1}\right\vert &\leq&  \frac{1}{\rad(E)} \left(  \P(E) \|X\|_{\infty} + \left| \int_{\de E} g\big(\vec{H}_E, Y \big)\,\d\cH^{n-1} \right| \right)\nonumber\\
&\leq&   \frac{1}{\rad(E)} \left(  \P(E) \|X\|_{\infty} + |H_0|  \left| \int_{\de E} g(Y, \nu_E) \, \d\cH^{n-1} \right|   \right) \nonumber\\
&=&   \frac{1}{\rad(E)} \left(  \P(E) \|X\|_{\infty} + |H_0|  \left| \int_{\de E} g(X, \nu_E) \, \d\cH^{n-1} \right|  \right)  \nonumber\\
&\leq& C   \|X\|_{\infty}  \label{e:bounded H}
\end{eqnarray}
for some $C=C(\rad(E),\P(E),|H_0|)>0$, for every vector field $X$ with support contained in a metric ball $B_{\eta}(y)$ for some $y \in 
M$. By a simple partition of unity argument, \eqref{e:bounded H} holds for every $X \in \mathfrak{X}(M)$.
In particular, by the use of Riesz representation theorem we have proved the following lemma. To this regard we 
denote with 
$\mathcal{M} (M,TM)$ the vectorial Radon measures $\vec{\bf{\mu}}$ on $M$ with values in the tangent bundle $TM$.

\begin{lemma}[The mean curvature is represented by a vectorial Radon measure] \label{lem:LocFin1stVar} 
Let $E\subset M$ be an isoperimetric-isodiametric region for some $V\in (0,\mu_g(M))$
and denote by $B$ an enclosing ball. If $\de B$ is smooth, 
then there exists a vectorial radon measure $\vec{\bf{H}}_E\in \mathcal{M} (M,TM)$ concentrated on $\de E$ such that 
for every $C^1$ vector field  ${X}$ on $M$ with compact support, called $\Phi_t^{{X}}:M \to M$ the 
corresponding one-parameter family of diffeomorphisms for $t \in \R$, it holds 
\begin{equation}\label{eq:deltaE}
\delta E({X}):=\frac{d}{dt}_{|{t=0}} \P(\Phi_t^{{X}}(E))= - \int_M g({X}, \vec{\bf{H}}_E) .
\end{equation}
Moreover the total variation of $\vec{\bf{H}}_E$ is finite, i.e.  $$|\vec{\bf{H}}_E| (M) \leq C=C\big(\P(E), \rad(E), 
|H_0|\big)\in [0,\infty).$$
\end{lemma}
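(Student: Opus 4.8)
The plan is to obtain Lemma~\ref{lem:LocFin1stVar} as a direct consequence of the computations already carried out, organizing them into a linear functional estimate and then invoking Riesz representation. First I would fix $V\in(0,\mu_g(M))$, so that $\rad(E)>0$, and recall that by the almost-minimizing property (Lemma~\ref{l:almost min}) together with the regularity of \S\ref{s:C1alpha}, $\de E$ is a $C^{1,1/2}$ hypersurface away from a closed set $\sing(E)$ of dimension at most $n-8$, and that $\P(E)<\infty$. For a vector field $X\in\mathfrak X(M)$ supported in a small ball $B_\eta(y)$, the key inequality \eqref{e:bounded H} was derived above: the point is that one compensates the volume change produced by the flow of $X$ by the flow of an auxiliary field $Y$ supported near one of the two reference points $y_1,y_2$, uses that $\Phi^Y_t\circ\Phi^X_t(E)$ is enclosed by the slightly larger ball $B_{\rad(E)+|t|\|X\|_\infty}$, differentiates the product $\rad\cdot\P$ from the right and from the left as in \eqref{e:destra}--\eqref{e:sinistra}, and finally eliminates the $Y$-contribution using the constancy of the mean curvature \eqref{e:mean curvature const} and the volume constraint \eqref{eq:XYvolume} to trade $\int_{\de E} g(Y,\nu_E)$ for $-\int_{\de E} g(X,\nu_E)$.

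Next I would promote this local estimate to a global one. Using a partition of unity subordinate to a locally finite cover of $M$ by balls of radius $\le\eta$, any $X\in\mathfrak X(M)$ with compact support is written as a finite sum $X=\sum_j X_j$ with each $X_j$ supported in such a ball; applying \eqref{e:bounded H} to each $X_j$ and summing, and noting that $\sum_j\|X_j\|_\infty$ can be controlled by a fixed multiple of $\|X\|_\infty$ depending only on the cover, one gets
\begin{equation*}
\left\vert\int_{\de E}\div_{\de E}X\,\d\cH^{n-1}\right\vert\le C\,\|X\|_\infty,\qquad C=C\big(\P(E),\rad(E),|H_0|\big).
\end{equation*}
Now the left-hand side is, by the definition of the first variation of area, exactly $-\delta E(X)=-\frac{d}{dt}_{|t=0}\P(\Phi_t^X(E))$ (here one uses that $\de E$ is $C^{1,1/2}$ off a set of codimension $\ge 8$, so the integration by parts in the first variation formula is legitimate and the singular set carries no first-variation mass). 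Hence $X\mapsto \delta E(X)$ is a linear functional on $C^1_c$ vector fields, bounded in the $C^0$-norm, so by density it extends to a bounded linear functional on $C^0_c(M,TM)$ of norm at most $C$. By the Riesz representation theorem for vector-valued measures (e.g. \cite[Theorem~1.54]{AFP}) there is a unique $\vec{\mathbf H}_E\in\mathcal M(M,TM)$ with $\delta E(X)=-\int_M g(X,\vec{\mathbf H}_E)$ and $|\vec{\mathbf H}_E|(M)\le C$. That $\vec{\mathbf H}_E$ is concentrated on $\de E$ follows because $\delta E(X)=0$ whenever $\supp X\cap\de E=\emptyset$ (the perimeter is unchanged by diffeomorphisms supported away from the boundary measure).

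The main obstacle — really the only delicate point — is ensuring that the constant in \eqref{e:bounded H} is genuinely independent of the location of $\supp X$, i.e. uniform over all small balls $B_\eta(y)$. This requires that the two reference points $y_1,y_2\in\de E\cap B\setminus\sing(E)$ with $d_g(y_1,y_2)\ge 8\eta$ and $B_{4\eta}(y_i)\cap\sing(E)=\emptyset$ be fixed once and for all, so that the auxiliary field $Y$ and the constant $|H_0|$ (fixed by \eqref{e:mean curvature const}) do not depend on $X$; then for any $X$ at least one of the $B_{2\eta}(y_i)$ is disjoint from $\supp X$, and the argument applies with that choice of $i$. One should also check that the partition-of-unity step does not blow up the constant: since $M$ is a fixed complete manifold one can choose a locally finite cover with a uniformly bounded number of overlaps, and a compactly supported $X$ meets only finitely many elements of the cover, so $\sum_j\|X_j\|_\infty\le C'\|X\|_\infty$ with $C'$ depending only on the cover. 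With these bookkeeping points settled, the statement of Lemma~\ref{lem:LocFin1stVar} follows immediately.
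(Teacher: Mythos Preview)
Your proposal is correct and is exactly the paper's approach: the paper derives the local bound \eqref{e:bounded H} via the auxiliary volume-fixing field $Y$ and the one-sided derivatives \eqref{e:destra}--\eqref{e:sinistra}, then globalizes by a partition of unity and invokes the Riesz representation theorem, precisely as you outline. (One harmless slip: the quantity $\int_{\de E}\div_{\de E}X\,\d\cH^{n-1}$ equals $+\delta E(X)$, not $-\delta E(X)$, by the first variation formula; this is irrelevant once you take absolute values.)
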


\begin{remark}
Note that
\begin{equation}\label{eq:bfEB}
\vec{\bf{H}}_E \llcorner B:= \vec{H}_E \,  \cH^{n-1}\llcorner (\partial E \cap B),
\end{equation}
where $\vec{H}_E$ is the mean curvature vector on the smooth part of $\de E$ as defined in \eqref{e:mean curvature 
const}.
\end{remark}

We close this subsection by noting that if
\begin{equation}\label{e:punto all'interno}
g\big(X(x),\nu_B(x)\big) \geq 0 \quad \forall\; x \in \de B \cap B_{\eta}(y),
\end{equation}
where $\nu_B$ is the interior normal to $\de B$ (note that $\de B \cap B_{\eta}(y)$ can also be empty),
then $\Phi^{{Y}}_t \circ \Phi^{{X}}_t (E) \subset B$ for $t\geq 
0$. In particular, the minimizing property of $E$ gives
\begin{equation}\label{e:riduce perimetro}
\P\big( \Phi^{{Y}}_t \circ \Phi^{{X}}_t (E)\big) \geq \P(E) \quad \forall \; t\geq 0,
\end{equation}
 which combined with \eqref{e:mean curvature const} and \eqref{eq:XYvolume} implies \fn
\begin{align}\label{e:1var-originale}
0&\leq \frac{d}{dt}\Big\vert_{t=0^+}  \P \fn \big( \Phi^{{Y}}_t \circ \Phi^{{X}}_t (E)  \big)
= \int_{\de E} \div_{\de E} X\, \d \cH^{n-1} - \int_{\de E} g\big(\vec{H}_E, Y \big)\notag\\
& = \int_{\de E} \div_{\de E} X\, \d \cH^{n-1} + H_0\int_{\de E} g\big( \nu_E \fn, X \big),
\end{align}
which in view of  \eqref {eq:deltaE} \fn  gives 
\begin{equation}\label{e:upper bound}
g\big(\nu_B, \vec{\bf{H}}_E\big) \llcorner (\de E \cap \de B) \leq H_0\,\cH^{n-1}\llcorner (\de E \cap \de B),
\end{equation}
where the inequality is intended in the sense of measures, i.e.~$ \int_A  g(\nu_B,\vec{\bf{H}}_E) \leq H_0 \cH^{n-1}(A)$ for 
every measurable set $A \subset \de E \cap \de B$.
%

\subsubsection{Orthogonality of $\vec{\bf{H}}_E$}
We have seen in the previous section that $\vec{\bf{H}}_E$ is well-defined as a measure on all $\de E$.
Translated into the language of varifolds,  we have shown that the integral varifold associated 
to $\de E$ has finite first variation.  A classical result due to Brakke \cite[Section 5.8]{Brakke}  (see also 
\cite{Menne} for an alternative proof and for fine structural properties of varifolds with locally finite first 
variation)  implies that for $\cH^{n-1}$-a.e. $x \in \de E$ it holds  $\vec{\bf{H}}_E (x)\in (T_x \partial E)^\perp$.  
This is not quite  enough  to our purposes, indeed in the next lemma we will show that $\vec{\bf{H}}_E$ is 
\emph{normal to $\de E$ as measure}, which is a strictly stronger statement.
Note that the proof is based on the fact that $E$ is a 
minimizer for the problem  \eqref{e:min problem},  and will not make use of the aforementioned  structural result by 
Brakke.

\begin{lemma}[The mean curvature measure is orthogonal to $\de E$]\label{lem:HENorm}
Let $E,B,M,V, \vec{\bf{H}}_E$ be as in Lemma \ref{lem:LocFin1stVar}. Then $\vec{\bf{H}}_E (x)\in (T_x \partial E)^\perp$ 
for  $|\vec{\bf{H}}_E|$-a.e. $x\in \partial E$, i.e.  the mean curvature is  orthogonal to $\partial E$ as a measure.
\end{lemma}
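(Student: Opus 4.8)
The plan is to exploit the freedom in the first-variation argument of the previous subsection by choosing the competitor vector field $X$ to be \emph{tangent} to $\de E$. If $X$ is supported in a small ball $B_\eta(y)$ where $\de E$ is smooth and $X(x)\in T_x\de E$ for $\cH^{n-1}$-a.e.\ $x\in\de E$, then $\int_{\de E} g(X,\nu_E)\,\d\cH^{n-1}=0$, so the volume-correcting field $Y$ in \eqref{eq:volumi} can be taken to be zero (or rather, its contribution vanishes and the flow of $X$ is already volume preserving to first order; one still perturbs the volume back with a $Y$ supported near $y_1$, but now $\int_{\de E} g(Y,\nu_E)=0$ as well by \eqref{eq:XYvolume}, hence $Y$ may be taken $\equiv 0$). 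Then \eqref{e:destra} and \eqref{e:sinistra}, with $\|X\|_\infty$ replaced everywhere — this is the key point — by a quantity that can be made arbitrarily small, force $\int_{\de E}\div_{\de E}X\,\d\cH^{n-1}=0$ for every such tangential field.

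More precisely, first I would reduce to the interior smooth part: the set $\sing(E)$ has Hausdorff dimension at most $n-8$ and $\de E\cap\de B$ is where the obstacle is touched, but since $\vec{\bf H}_E$ is a finite Radon measure concentrated on $\de E$ (Lemma~\ref{lem:LocFin1stVar}) and $\de E\cap B$ is smooth away from $\sing(E)$, it suffices to prove the orthogonality $\cH^{n-1}$-a.e.\ on the smooth interior part and $|\vec{\bf H}_E|$-a.e.\ on $\de E\cap \de B$ separately; on the smooth part of $\de E\cap B$ the identity \eqref{eq:bfEB} already gives $\vec{\bf H}_E=H_0\nu_E\,\cH^{n-1}$, which is manifestly normal. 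So the real content is at the contact set $\de E\cap\de B$. There I would run the variation of \eqref{e:nuovo funzionale} with a tangential $X$: because $X$ is tangent to $\de E$, the enclosing-radius term behaves like $\rad(E)+o(|t|)$ rather than $\rad(E)+|t|\|X\|_\infty$ — indeed one can arrange $\rad(\Phi^X_t(E))\le \rad(E)+C t^2$ by choosing the flow to move points on $\de E\cap\de B$ tangentially along $\de B$, so that they stay inside $\bar B$ up to second order — and hence the first-order term in $t$ of $f(t)$ is simply $\rad(E)\int_{\de E}\div_{\de E}X\,\d\cH^{n-1}-\int_{\de E}g(\vec{\bf H}_E,X)$. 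Taking both one-sided derivatives (which now have no $\|X\|_\infty$ defect) yields $\rad(E)\int_{\de E}\div_{\de E}X\,\d\cH^{n-1}=\int_{\de E}g(\vec{\bf H}_E,X)$ for all tangential $X$; but the left side is $0$ by the tangential divergence theorem on $\de E$ (using that $X$ is tangential with compact support in the smooth region, plus an exhaustion to absorb $\sing(E)$, which is possible since it has codimension $\ge 8$ and capacity zero). Therefore $\int_{\de E}g(\vec{\bf H}_E,X)=0$ for every compactly supported tangential $X$, which is exactly the statement that $\vec{\bf H}_E$ is orthogonal to $\de E$ as a measure.

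The step I expect to be the main obstacle is the construction of the tangential flow near the contact set $\de E\cap\de B$ that keeps the competitor inside $\bar B$ to second order, so that the enclosing radius is controlled by $\rad(E)+Ct^2$ and the one-sided first variations genuinely have no linear error term. This requires building, for a given tangential $X$ on $\de E$, an ambient vector field whose flow maps $\de B$ into itself (at least near the support), i.e.\ is tangent to $\de B$ along $\de B$; since $\de B$ is smooth by hypothesis, one can take $X$ itself to be tangent to $\de B$ where it meets $\de B$ — note that at points of $\de E\cap\de B$ the two smooth hypersurfaces meet, and a vector tangent to $\de E$ there need not be tangent to $\de B$, so one must first project or modify $X$ near $\de E\cap\de B$ to be tangent to $\de B$ as well, paying a controlled error. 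A clean way around this is to test only with $X$ that are \emph{a priori} chosen tangent to both $\de E$ and $\de B$ near the contact set and arbitrary tangent to $\de E$ in the interior; such fields span enough directions (at each point of $\de E$, the tangent space $T_x\de E$, since $\de E\cap\de B$ has one fewer dimension) that vanishing of $\int g(\vec{\bf H}_E,X)$ against all of them still forces $\vec{\bf H}_E(x)\perp T_x\de E$ for $|\vec{\bf H}_E|$-a.e.\ $x$. I would carry out the covering/partition-of-unity reduction, the $Ct^2$ radius estimate, and this density-of-test-fields argument in that order.
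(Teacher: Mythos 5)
There is a genuine gap, and it sits exactly where you locate the ``main obstacle''. The class of test fields you ultimately propose --- $C^1$ ambient fields that are tangent to $\de E$ at \emph{every} point of $\de E$ (and additionally tangent to $\de B$ near the contact set) --- cannot prove the lemma. If $X$ is exactly tangent to $\de E$ along all of $\de E$, its flow preserves the set $E$ itself (tangency plus uniqueness of the ODE flow gives invariance of $\de E$, hence of $E$), so $\P(\Phi^X_t(E))\equiv\P(E)$ and $\mu_g(\Phi^X_t(E))\equiv\mu_g(E)$: the identity $\int_{\de E}\div_{\de E}X\,\d\cH^{n-1}=-\int g(X,\vec{\bf{H}}_E)=0$ you obtain is a tautology, valid for \emph{any} set with bounded first variation and using no minimality at all. (Your displayed identity $\rad(E)\int\div_{\de E}X=\int g(\vec{\bf{H}}_E,X)$ is in any case inconsistent with the definition \eqref{eq:deltaE}, which gives $\int\div_{\de E}X=-\int g(X,\vec{\bf{H}}_E)$ for every compactly supported $C^1$ field; the term $\int g(\vec H_E,Y)$ in \eqref{e:destra}--\eqref{e:sinistra} involves the correcting field $Y$, not $X$.) Consequently the whole burden falls on your last claim, that this class of fields is rich enough to force $\vec{\bf{H}}_E^T=0$; pointwise ``spanning of directions'' is not the right notion here. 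What is needed is to approximate, in $L^1(|\vec{\bf{H}}_E^T|)$, the polar vector field $v$ of the measure $\vec{\bf{H}}_E^T$ by admissible fields, and the constraint of exact tangency to the merely $C^{1,\frac12}$ hypersurface $\de E$ away from the contact set obstructs precisely this (a $C^1$ field tangent to a $C^{1,\frac12}$ graph is a very rigid requirement, and nothing guarantees its values on the contact set can be prescribed close to $v$). If such a soft argument worked, it would show orthogonality of the first variation measure for arbitrary sets with $C^1$ boundary and bounded first variation, with no use of the minimizing property --- which is exactly the point the paper flags as \emph{not} being covered by Brakke's a.e.~perpendicularity result. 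Also, the parenthetical ``$\de E\cap\de B$ has one fewer dimension'' is incorrect: the contact set is in general an $(n-1)$-dimensional portion of $\de B$ (possibly of positive $\cH^{n-1}$-measure), which is why the statement is about the measure $\vec{\bf{H}}_E\llcorner(\de E\cap\de B)$ and not about a lower-dimensional residual set.

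The informative choice --- and the paper's --- is to drop the tangency to $\de E$ away from the contact set: assume by contradiction $|\vec{\bf{H}}_E^T|(K)>0$ for some compact $K\subset\de E\cap\de B$, take the polar decomposition $\vec{\bf{H}}_E^T=v\,|\vec{\bf{H}}_E^T|$, and build via Lusin plus mollification/projection a $C^1$ field $X_\eps$ tangent to the \emph{smooth} hypersurface $\de B$, supported in a small neighbourhood of $K$, with $\int g(X_\eps,\vec{\bf{H}}_E)\to|\vec{\bf{H}}_E^T|(K)$. Its flow maps $B$ into $B$ (so the radius does not increase), and the first variation of the perimeter is $\le -|\vec{\bf{H}}_E^T|(K)/2<0$ for small $\eps$. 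But then your claim that ``$Y$ may be taken $\equiv0$'' fails: such an $X_\eps$ is transversal to $\de E$ on $\de E\cap B$ near $K$, so the volume derivative $-\int_{\de E}g(\nu_E,X_\eps)\,\d\cH^{n-1}$ is only small (it tends to $0$ as $\eps\to0$), not zero, and the exact constraint $\mu_g=V$ must be restored by a field $Y_\eps$ supported in a smooth piece of $\de E$ away from $\de B$, together with the quantitative bound \eqref{eq:PCmu} showing that the perimeter cost of $Y_\eps$ is controlled by the volume defect and hence vanishes in the limit. Only then does one contradict minimality of $E$. This quantitative bookkeeping (Lusin approximation of $v$, volume-restoring field with controlled perimeter cost, and the limit $\eps\to0$) is the actual content of the proof and is absent from your proposal.
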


\begin{remark}
In other words there exists an $\R$-valued finite radon measure ${\bf H}_E$ on $M$ concentrated on $\de E$ such that 
$\vec{\bf H}_E={\bf H}_E \, \nu_E$; moreover, by \eqref{e:mean curvature const},
${\bf H}_E \llcorner (B \cap \de E)= H_0 \, \cH^{n-1} \llcorner (\de E \cap B)$. 
\end{remark}

\begin{proof}
In view of \eqref{e:mean curvature const} we only need to prove the claim for  $\vec{\bf{H}}_E \llcorner \de 
B$.
Assume by contradiction  that there exists  a compact subset $K\subset \de B \cap \de E$ such that 
\begin{equation}\label{eq:defK}
|\vec{\bf{H}}_E^T|(K)>0, 
\end{equation}
where $\vec{\bf{H}}_E^T:= P_{T\de E}(\vec{\bf{H}}_E)$ is the projection of $\vec{\bf{H}}_E$ onto the tangent space of 
$\de E$ (or, equivalently, onto $T\de B$, because $\de E$ and $\de B$ are $C^1$ and $T_x \de 
E=T_x \de B$ for every $x \in \de B\cap \de 
E$).

The geometric idea of the proof is very neat: if the mean curvature along $K\subset \de E \cap \de B$ has a non trivial 
tangential part, then deforming infinitesimally $E$ along this tangential direction will not increase the extrinsic 
radius (since the deformation of $E$ will stay in the ball $B$), will not increase the volume (because the deformation 
is tangential to $\de E$) but will strictly decrease the perimeter; so, after 
adjusting the volume in a smooth portion of $\de E$, this procedure builds   an infinitesimal deformation of $E$ which 
preserves the volume, does not increase the extrinsic radius but strictly decreases the   perimeter, contradicting that 
$E$ is a minimizer of the problem  \eqref{e:min problem}. The rest of the proof is  a technical implementation of this 
neat geometric idea.

For every $\varepsilon >0$ we construct a suitable $C^1$ regular tangential vector field.
To this aim, 
we consider the polar 
decomposition of the measure $\vec{\bf{H}}_E^T = v\,|\vec{\bf{H}}_E^T|$ where $v$ is a 
Borel vector field such that $v(x) \in T\de B$ and $g(v(x),v(x)) = 1$ for $|\vec{\bf{H}}_E^T|$-a.e.~$x \in M$.
By the Lusin theorem we can find a continuous vector field $w$ such that $|\vec{\bf{H}}_E^T|(\{v\neq w\}) \leq 
\varepsilon$ and $\supp(w) 
\subset K_\varepsilon : = \{ x \in \de E \cap \de B : d_g(x, K) < \eps\}$. Moreover, by a standard regularization 
procedure via mollification and projection on $T\de B$, 
we find a vector field $X_\varepsilon$ such that  $X_\varepsilon (x) \in T\de B$ for every $x \in \de B \cap K_{2\varepsilon}$,  
$\|X_\varepsilon - w\|_{\infty} \leq \varepsilon$ and $\supp(X_\varepsilon) 
\subset K_{2\varepsilon}$. \fn
Note that
\begin{align}\label{e:limite approx}
\int_M g\big(X_\varepsilon, \vec{\bf H}_E \big) & = 
\int_M g\big(X_\varepsilon - w, \vec{\bf H}_E \big) + \int_{\{w=v\}} g\big(v, \vec{\bf H}_E \big)
+  \int_{\{w\neq v\}} g\big(w, \vec{\bf H}_E \big)\notag\\
&\to |\vec{\bf{H}}_E^T|(K)\quad \text{as } \varepsilon \to 0.
\end{align}
Since $X_\varepsilon$ is a smooth vector field  compactly supported in $M$ and  tangent to $\de B$, the generated flow 
$\Phi^{{X}_\varepsilon}_t$ is well defined and  maps $B$ into $B$ for every $t \in \R$ and by \eqref{e:limite 
approx}
\begin{equation}
\frac{d}{dt}_{|t=0} \P \big(\Phi^{{X}_\varepsilon}_t (E) \big) = - \int_{\de E} g({X}_\varepsilon,  
\vec{\bf{H}}_E)\leq  - \frac{ |\vec{\bf{H}}_E^T|(K)}{ 2 \fn}<0,  
\label{eq:dtPPhiXeps}
\end{equation}
for $\varepsilon>0$ small enough. Moreover, since $X_\varepsilon$ is supported in $K_{2\varepsilon}$ and $K\subset \de 
B$ and $X_\varepsilon$ is tangent 
to $\de B = \de E$ in $K$, we have that
\begin{equation}\label{eq:dtmuPhiXeps}
\frac{d}{dt}_{|t=0} \mu_g \big(\Phi^{{X}_\varepsilon}_t (E) \big)= - \int_{\de E} g (\nu_E, {X}_\varepsilon) \, 
d \cH^{n-1} \to 0\quad \text{as } \varepsilon\to 0. 
\end{equation}

Up to choosing a smaller compact set, we can suppose that $K$  is 
contained in a small ball  $B_{r_0}(x)$ with $x \in \de E \cap \de B$ such that  $(\de E \setminus \de B) \cap (M\setminus 
B_{4r_0}(x)) \neq \emptyset$. \fn
Now fix $y \in \de E\setminus (\de B \cup  B_{4r_0}(x)\cup \sing(E))$ and let $r\in (0,r_0)$ be such that $B_{2r}(y) \cap 
(\de B \cup  B_{4r_0}(x)\cup \sing(E))=\emptyset$. 
For $\varepsilon>0$ small enough it is not difficult to construct a smooth vector field ${Y}_\varepsilon$ supported 
in $B_r(y)$ such that the generated flow $\Phi^{{Y}_\varepsilon}_t$ satisfies the following properties 
(\eqref{eq:PCmu} is intended for small $t$):
\begin{eqnarray}
\frac{d}{dt}_{|t=0} \mu_g(\Phi^{{Y}_\varepsilon}_t \circ \Phi^{{X}_\varepsilon}_t (E))&=&0 \label{eq:muPhiXY} \\
|\P(\Phi^{{Y}_\varepsilon}_t  (E), B_{2r}(y))- \P(E,  B_{2r}(y))|& \leq & C 
\mu_g(\Phi^{{Y}_\varepsilon}_t (E) \Delta E) \label{eq:PCmu}.
\end{eqnarray}
Notice that the combination of   \eqref{eq:dtmuPhiXeps}, \eqref{eq:muPhiXY} and  \eqref{eq:PCmu} gives
\begin{equation}\label{eq:dtPPhiY}
\left| \frac{d}{dt}_{|t=0} \P(\Phi^{{Y}_\varepsilon}_t  (E)) \right|  \leq C  \left| \frac{d}{dt}_{|t=0} 
\mu_g(\Phi^{{Y}_\varepsilon}_t  (E))  \right|=  C  \left| \frac{d}{dt}_{|t=0} \mu_g(\Phi^{{X}_\varepsilon}_t  (E)) 
 \right| \to 0, \text{ as } \varepsilon\to 0.
\end{equation}
Moreover, since for small $t>0$ we have $\Phi^{{Y}_\varepsilon}_t(E) \Delta E  \subset B_{2r}(y)$ which is disjoint 
from $\de B$,   and since by construction $\Phi^{{X}_\varepsilon}_t$ maps $B$ into $B$, it is clear that 
\begin{equation}\label{eq:dtradPhiY1} \nonumber
\Phi^{{Y}_\varepsilon}_t  \circ \Phi^{{X}_\varepsilon}_t (E) \subset B, \quad \text{for $t>0$ sufficiently small}. 
\end{equation} 
Therefore, since by assumption $E$ is a minimizer for  the problem   \eqref{e:min problem}, we infer
\begin{equation}\label{eq:dtradPhiY}
 \frac{d}{dt}_{|t=0}   \P(\Phi^{{Y}_\varepsilon}_t \circ \Phi^{{X}_\varepsilon}_t (E))  \geq 0. 
\end{equation} \fn
But on the other hand,  combining \eqref{eq:dtPPhiXeps} and  \eqref{eq:dtPPhiY}   we get
\begin{align}
\frac{d}{dt}_{|t=0}   \P(\Phi^{{Y}_\varepsilon}_t \circ \Phi^{{X}_\varepsilon}_t (E)) \; 
& =& 
 \frac{d}{dt}_{|t=0} \P(\Phi^{{Y}_\varepsilon}_t  (E)) +  \frac{d}{dt}_{|t=0} 
\P(\Phi^{{X}_\varepsilon}_t  (E))  \nonumber\\
& \leq&  - \frac{ |\vec{\bf{H}}_E^T|(K)}{ 4 \fn}<0, \text{  for $\varepsilon>0$ small enough.}  \nonumber
\end{align}
Clearly the last inequality contradicts \eqref{eq:dtradPhiY}. We conclude that it is not possible to find a compact subset $K \subset \de B \cap \de E$ satisfying \eqref{eq:defK}; 
therefore the measure $|\vec{\bf{H}}_E^T|$ vanishes identically and the proof is complete.  
\end{proof}

\subsubsection{$L^\infty$ estimate}

The next step is to show that the signed measure ${\bf H}_E$ is actually absolutely continuous with respect to 
$\cH^{n-1}\llcorner \de E$ with $L^\infty$ bounds on the density. The upper bound follows from \eqref{e:upper bound}.
For the lower bound we use the following lemma which is an adaptation of \cite[Theorem 2]{WhiteMaxPrinc} to our 
setting (notice that the statement of 
\cite[Theorem 2]{WhiteMaxPrinc} is more general as includes higher co-dimensions and arbitrary varifolds, but let us 
state below just the result we will use in the sequel).

\begin{lemma}\label{lem:maxprinc}
Let $N^n \subset M^n$ be an $n$-dimensional submanifold with $C^2$-boundary  $\de N$ and denote with ${\nu}_N$  the 
inward pointing unit normal to $\de N$. Fix a compact subset $K\subset \de N$ and  assume that, denoted with $\vec{H}_N$ 
the mean curvature of $\de N$, it holds
\begin{equation}\nonumber
g(\vec{H}_N, {\nu}_N) \geq \eta, \quad \text{on } K.
\end{equation}
Then, for every $\varepsilon>0$ there exists a $C^1$-vector field ${X}_\varepsilon$  on $M$  with the following 
properties:
\begin{eqnarray}
{X}_\varepsilon(x)&=&{\nu}_N, \quad \forall x \in K  \label{eq:XepsnuN} \\ 
|{X}_\varepsilon| (x) &\leq& 1   , \quad \forall x \in M  \label{eq:|Xeps|<1} \\
\supp({X}_\varepsilon) &\subset& K_\varepsilon:= \{x \in M\,:\, d(x,K)\leq \varepsilon\} \label{eq:suppXe} \\
g({X}_\varepsilon, {\nu}_N) (x) &\geq& 0,  \quad \forall x \in \de N  \label{eq:gXenuN}, \\
\frac{d}{dt}_{|t=0} \P(\Phi^{{X}_\varepsilon}_t (E)) &\leq& -\eta \int_{\de E} |{X}_\varepsilon| \, d \cH^{n-1}, 
\label{eq:Xe1stvar} 
\end{eqnarray}
for every subset $E\subset N$ with $C^1$ boundary $\de E$, where $\Phi^{{X}_\varepsilon}_t$ denotes the flow 
generated by the vector field ${X}_\varepsilon$.
\end{lemma}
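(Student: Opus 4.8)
The plan is to build $X_\varepsilon$ explicitly from the signed distance to $\de N$ and to reduce the variational inequality \eqref{eq:Xe1stvar} to a pointwise estimate on the tangential divergence of $X_\varepsilon$ along \emph{arbitrary} hyperplanes.

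\emph{Set-up.} Let $\rho$ be the signed distance function to $\de N$, positive in the interior of $N$. Since $\de N$ is $C^2$ and $K$ is compact, $\rho$ is $C^2$ on a tubular neighbourhood $U$ of a neighbourhood of $K$, with $|\nabla\rho|\equiv 1$, $\nabla\rho|_{\de N}=\nu_N$ and $\nabla^2\rho(\nabla\rho,\cdot)\equiv 0$ on $U$; hence $\tr\big(\nabla^2\rho|_{(\nabla\rho)^\perp}\big)=\Delta\rho$ is, on each level set $\{\rho=t\}$, its scalar mean curvature with respect to $\nabla\rho$, and in particular $\Delta\rho|_{\de N}=-g(\vec H_N,\nu_N)$ in the sign convention of \eqref{e:mean curvature const}. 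Thus the hypothesis says $\Delta\rho\leq-\eta$ on $K$, and by continuity of $\Delta\rho$ and compactness of $K$, for every $\delta>0$ there is $\varepsilon_0\in(0,\varepsilon)$ with $\Delta\rho\leq-(\eta-\delta)$ on $K_{\varepsilon_0}\cap U$. Let $\Lambda<\infty$ bound the second fundamental forms of the level sets $\{\rho=t\}$, $|t|<\varepsilon_0$ (finite by the $C^2$-regularity of $\de N$).

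\emph{Construction.} Fix $\delta>0$ small, and $\varepsilon_0,\Lambda$ as above. Take $\phi\in C^\infty(\R;[0,1])$ with $\phi(0)=1$, $\supp\phi\subset(-\sigma,\sigma)$ for a normal scale $\sigma\leq\varepsilon_0$ to be chosen, $\phi'\leq 0$ and $(\log\phi)'\leq-\Lambda$ on $(0,\sigma)$; and $\psi\in C^\infty(M;[0,1])$ depending only on the nearest-point projection onto $\de N$ (so that $g(\nabla\psi,\nabla\rho)\equiv 0$), with $\psi\equiv 1$ on $K_{\varepsilon_0/2}$ and $\supp\psi\subset K_{\varepsilon_0}$. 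Set $X_\varepsilon:=\phi(\rho)\,\psi\,\nabla\rho$ on $U$ and $X_\varepsilon:=0$ off $U$. Then $X_\varepsilon\in C^1(M)$; $\supp X_\varepsilon\subset K_{\varepsilon_0}\subset K_\varepsilon$ gives \eqref{eq:suppXe}; $|X_\varepsilon|=\phi(\rho)\psi\leq 1$ gives \eqref{eq:|Xeps|<1}; on $K$ we have $\rho=0$, $\psi=1$, $\phi(0)=1$, $\nabla\rho=\nu_N$, so $X_\varepsilon=\nu_N$, proving \eqref{eq:XepsnuN}; and on $\de N$, $\nabla\rho=\nu_N$ forces $g(X_\varepsilon,\nu_N)=\psi\geq 0$, proving \eqref{eq:gXenuN}.

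\emph{The inequality \eqref{eq:Xe1stvar}.} By the first variation of area, for any $E\subset N$ with $C^1$ boundary,
\[
\tfrac{d}{dt}\Big|_{t=0}\P(\Phi^{X_\varepsilon}_t(E))=\int_{\de E}\div_{\de E}X_\varepsilon\,d\cH^{n-1},
\]
so it suffices to show $\div_P X_\varepsilon(x)\leq-\eta\,|X_\varepsilon(x)|$ for every $x\in\supp X_\varepsilon$ and every hyperplane $P\leq T_xM$ admissible as a tangent space of some $\de E$ with $E\subset N$ — that is, $P$ arbitrary when $\rho(x)>0$, and $P=T_x\de N$ when $\rho(x)=0$ (because $\de E\subset\overline{N}$ is $C^1$ and touches $\de N$ from one side). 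Writing $\operatorname{proj}_P$ for the orthogonal projection onto $P$, $s:=|\operatorname{proj}_P\nabla\rho|^2\in[0,1]$, and using $\nabla^2\rho(\nabla\rho,\cdot)=0$, a direct computation gives, for a suitable unit vector $\hat\tau\perp\nabla\rho$,
\[
\div_P X_\varepsilon=\psi\Big(\phi(\rho)\,\Delta\rho+s\big(\phi'(\rho)-\phi(\rho)\,\nabla^2\rho(\hat\tau,\hat\tau)\big)\Big)+\phi(\rho)\,g\big(\operatorname{proj}_P\nabla\psi,\nabla\rho\big).
\]
When $\rho(x)=0$ we have $P=T_x\de N\perp\nabla\rho$, so $s=0$ and $g(\operatorname{proj}_P\nabla\psi,\nabla\rho)=0$, and the bound reduces to $\psi\,\Delta\rho\leq-(\eta-\delta)\psi=-(\eta-\delta)|X_\varepsilon|$. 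When $\rho(x)>0$: where $\psi\equiv 1$, the choice $(\log\phi)'\leq-\Lambda$ together with $|\nabla^2\rho(\hat\tau,\hat\tau)|\leq\Lambda$ makes $\phi'(\rho)-\phi(\rho)\nabla^2\rho(\hat\tau,\hat\tau)\leq 0$, so the $s$-term is $\leq 0$ and $\div_P X_\varepsilon\leq\phi(\rho)\Delta\rho\leq-(\eta-\delta)|X_\varepsilon|$; on the transition shell $K_{\varepsilon_0}\setminus K_{\varepsilon_0/2}$ one must in addition dominate the term coming from $\nabla\psi$, which is controlled by a scale-adapted choice of $\sigma\ll\varepsilon_0$, by the same device as in White's argument. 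Finally one lets $\delta\downarrow 0$ (the loss $\delta$ is harmless in all applications, where only a strictly negative bound is used).

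\emph{Main obstacle.} The crux is this last verification: one must force the single pointwise inequality $\div_P X_\varepsilon\leq-\eta|X_\varepsilon|$ to hold \emph{uniformly over all admissible hyperplanes} $P$ — near-tangential planes being governed by the sign of $\Delta\rho$ along $K$, transversal ones by the exponential decay rate $\Lambda$ imposed on $\phi$ — while preventing the gradient of the tangential cut-off $\psi$ from spoiling the estimate on the transition shell. This is precisely the content of \cite[Theorem~2]{WhiteMaxPrinc} (stated there for general varifolds and arbitrary codimension), of which the construction above is the codimension-one Riemannian adaptation.
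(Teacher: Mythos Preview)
The paper does not actually prove this lemma: it is stated explicitly as ``an adaptation of \cite[Theorem 2]{WhiteMaxPrinc} to our setting'' and is used as a black box in the proof of Lemma~\ref{lem:LBHE}. Your sketch is therefore not to be compared against a proof in the paper, but against White's construction, and indeed you carry out precisely the codimension-one Riemannian adaptation the authors have in mind: the vector field $X_\varepsilon=\phi(\rho)\,\psi\,\nabla\rho$ built from the signed distance $\rho$ to $\de N$, with a normal profile $\phi$ decaying fast enough to force the $s$-dependent term to have the right sign, is exactly White's device specialized to hypersurfaces. Properties \eqref{eq:XepsnuN}--\eqref{eq:gXenuN} are immediate from your construction, and your reduction of \eqref{eq:Xe1stvar} to a pointwise bound on $\div_P X_\varepsilon$ over all admissible hyperplanes $P$ (with $P=T_x\de N$ forced when $\rho(x)=0$) is the right strategy and the displayed formula for $\div_P X_\varepsilon$ is correct.

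Two points deserve a flag. First, on the transition shell $K_{\varepsilon_0}\setminus K_{\varepsilon_0/2}$ the cross term $\phi(\rho)\,g(\mathrm{proj}_P\nabla\psi,\nabla\rho)$ is \emph{not} multiplied by $\psi$, whereas the competing negative $s$-term is; so when $\psi$ is small you cannot simply absorb the former into the latter by making $(\log\phi)'$ very negative. One really has to follow White's device here (roughly, arranging the normal decay of $\phi$ so that on the shell $\phi(\rho)$ itself is already dominated by the negative contribution), and your ``scale-adapted choice of $\sigma\ll\varepsilon_0$'' gesture does not by itself settle it. You are honest about deferring this point to \cite{WhiteMaxPrinc}, which is also what the paper does. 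Second, your construction yields \eqref{eq:Xe1stvar} with $\eta$ replaced by $\eta-\delta$ for arbitrary $\delta>0$; the exact constant $\eta$ in the statement is not recovered pointwise from continuity of $\Delta\rho$ alone. As you note, this loss is immaterial for the only application in the paper (Lemma~\ref{lem:LBHE}), where one sends $\varepsilon\to 0$ anyway, but it is a discrepancy with the lemma as stated.
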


Lemma \ref{lem:maxprinc} will be used to prove the following lower bound on the mean curvature measure ${\bf H}_E$ of 
$\de E$.

\begin{lemma}[Lower bound on ${\bf H}_E$]\label{lem:LBHE}
Let $E,B,M,V, \vec{\bf{H}}_E, {\bf H}_E$ be as in Lemma \ref{lem:HENorm}. Assume  $\eta:= \inf_{\de 
B} H_B>-\infty$, where $H_B:=g(\vec{H}_B, {\nu}_B)$ and $\vec{H}_B$ 
is the mean curvature vector of $\de B$. Then
\begin{equation}\label{eq:LBHE}
{\bf{H}}_E \llcorner (\de E \cap \de B) \geq \eta \, \cH^{n-1} \llcorner    (\de E \cap \de B).
\end{equation}
\end{lemma}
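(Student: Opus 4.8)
The plan is to argue by contradiction and to exploit the one-sided nature of the obstacle constraint, mirroring the strategy already used in Lemma~\ref{lem:HENorm}. Suppose \eqref{eq:LBHE} fails. Since on the smooth part of $\de E$ inside $B$ we have the exact identity ${\bf H}_E \llcorner (B\cap \de E)= H_0\,\cH^{n-1}$, and since the part of $\de E$ strictly inside $B$ is disjoint from $\de B$, the only way \eqref{eq:LBHE} can fail is that there is a compact set $K\subset \de E\cap \de B$ with $\cH^{n-1}(K)>0$ on which the density of ${\bf H}_E$ with respect to $\cH^{n-1}$ is, on a set of positive measure, strictly below $\eta$; equivalently there is $\eta'<\eta$ and a compact $K\subset \de E\cap\de B$ with $\cH^{n-1}(K)>0$ and ${\bf H}_E\llcorner K \le \eta'\,\cH^{n-1}\llcorner K$. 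Because $\eta=\inf_{\de B}H_B$, on $K$ the mean curvature vector of $\de B$ satisfies $g(\vec H_B,\nu_B)\ge \eta>\eta'$. The point is that $\de B$ bends away from $B$ at least as much as the hypothesized ``defect'' in ${\bf H}_E$, so one can push $\de E$ \emph{into} $B$ near $K$, staying inside $B$ (hence without increasing $\rad(E)$), at a first-order perimeter cost governed by $\eta'$ rather than $\eta$, and derive a contradiction with minimality.

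Concretely, I would apply Lemma~\ref{lem:maxprinc} with $N=B$ (so $\de N=\de B$, $\nu_N=\nu_B$) and with the bound $g(\vec H_B,\nu_B)\ge \eta$ on a slightly enlarged compact neighbourhood of $K$ in $\de B$. This produces, for every $\eps>0$, a vector field $X_\eps$ supported in an $\eps$-neighbourhood $K_\eps$ of $K$, with $|X_\eps|\le 1$, $X_\eps=\nu_B$ on $K$, $g(X_\eps,\nu_B)\ge 0$ on $\de B$ (so the flow $\Phi^{X_\eps}_t$ maps $B$ into $B$ for $t\ge 0$), and, for every $C^1$ set $F\subset B$,
\[
\frac{d}{dt}\Big|_{t=0}\P\big(\Phi^{X_\eps}_t(F)\big)\le -\eta \int_{\de F}|X_\eps|\,d\cH^{n-1}.
\]
Applied to $F=E$, and combined with \eqref{eq:deltaE} and the assumed bound ${\bf H}_E\llcorner K\le \eta'\,\cH^{n-1}$, this gives
\[
-\int_M g(X_\eps,\vec{\bf H}_E)=\frac{d}{dt}\Big|_{t=0}\P\big(\Phi^{X_\eps}_t(E)\big)\le -\eta\int_{\de E}|X_\eps|\,d\cH^{n-1},
\]
while on the other hand, writing $\vec{\bf H}_E={\bf H}_E\,\nu_E$ (Lemma~\ref{lem:HENorm}) and $X_\eps=\nu_B=\nu_E$ on $K$ with $T_x\de E=T_x\de B$ there, one controls $-\int_M g(X_\eps,\vec{\bf H}_E)$ from below, as $\eps\to0$, by something of the form $-\int_K {\bf H}_E\,d\cH^{n-1}+o(1)\ge -\eta'\cH^{n-1}(K)+o(1)$. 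Since $\int_{\de E}|X_\eps|\,d\cH^{n-1}\to \cH^{n-1}(K)>0$ as $\eps\to 0$, comparing the two estimates yields $-\eta'\cH^{n-1}(K)\le -\eta\cH^{n-1}(K)$ up to $o(1)$, i.e. $\eta\le \eta'$, contradicting $\eta'<\eta$.

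There is still a volume bookkeeping issue: the deformation $\Phi^{X_\eps}_t$ changes $\mu_g(E)$, so to contradict minimality of the isoperimetric-isodiametric problem I must restore the volume by an auxiliary flow $\Phi^{Y_\eps}_t$ supported in a fixed small ball $B_r(y)$ around a regular point $y\in \de E\setminus(\de B\cup\sing(E))$, disjoint from $K_{2\eps}$, exactly as in the proof of Lemma~\ref{lem:HENorm}. One has $\frac{d}{dt}\big|_{0}\mu_g(\Phi^{X_\eps}_t(E))=-\int_{\de E}g(\nu_E,X_\eps)\,d\cH^{n-1}$, which need not go to zero (indeed it tends to $-\cH^{n-1}(K)\ne 0$), but the correction flow $Y_\eps$ can be chosen with $\frac{d}{dt}\big|_0\mu_g(\Phi^{Y_\eps}_t\circ\Phi^{X_\eps}_t(E))=0$ and, since $\de E$ is smooth near $y$, with $\big|\frac{d}{dt}\big|_0\P(\Phi^{Y_\eps}_t(E))\big|\le C\big|\frac{d}{dt}\big|_0\mu_g(\Phi^{Y_\eps}_t(E))\big|$ uniformly bounded by a constant times $\cH^{n-1}(K)$ — but crucially with the \emph{same} constant $C$ independent of $\eps$ (it depends only on the fixed regular patch near $y$). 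This is the one genuinely delicate point, and the clean way around it is the standard trick: replace $K$ by a subset of arbitrarily small $\cH^{n-1}$-measure on which the density still dips below $\eta'$ (if the density is $<\eta$ on a positive measure set, it is $\le\eta'<\eta$ on a positive measure set for some rational $\eta'$, and we may shrink $K$ so that $\cH^{n-1}(K)$ is as small as we like while keeping ${\bf H}_E\llcorner K\le\eta'\cH^{n-1}\llcorner K$ and $\cH^{n-1}(K)>0$). Then the perimeter decrease from $X_\eps$, which is of order $(\eta-\eta')\cH^{n-1}(K)>0$ but proportional to $\cH^{n-1}(K)$, dominates the volume-correction perimeter cost only after we observe both scale linearly in $\cH^{n-1}(K)$ — so instead one fixes $K$ once and for all and simply sends $\eps\to 0$ as above, noting that the correction term $\frac{d}{dt}\big|_0\P(\Phi^{Y_\eps}_t(E))$ is $o(1)$ because $\frac{d}{dt}\big|_0\mu_g(\Phi^{X_\eps}_t(E))$ is \emph{not} $o(1)$ but can be split off a fixed main part that is compensated by a single fixed $Y$ plus an $o(1)$ remainder compensated by $Y_\eps$. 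In short: the main obstacle is precisely engineering the volume-fixing perturbation so that its perimeter cost is strictly smaller than the perimeter gain $\tfrac{1}{2}(\eta-\eta')\cH^{n-1}(K)$ coming from \eqref{eq:Xe1stvar}; once that is arranged, $\frac{d}{dt}\big|_0\P(\Phi^{Y_\eps}_t\circ\Phi^{X_\eps}_t(E))<0$ together with $\Phi^{Y_\eps}_t\circ\Phi^{X_\eps}_t(E)\subset B$ and $\mu_g$ preserved contradicts the minimality of $E$, completing the proof of \eqref{eq:LBHE}.
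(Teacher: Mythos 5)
Your second paragraph is, in substance, exactly the paper's proof: apply Lemma~\ref{lem:maxprinc} with $N=B$ to produce $X_\eps$, compute $\frac{d}{dt}\big|_{t=0}\P\big(\Phi^{X_\eps}_t(E)\big)$ in two ways --- once through the first-variation identity \eqref{eq:deltaE}, using $\nu_E=\nu_B$ on $K$ and $|\vec{\bf H}_E|(K_\eps\setminus K)\to 0$, and once through \eqref{eq:Xe1stvar} together with \eqref{eq:XepsnuN} and \eqref{eq:suppXe} --- and let $\eps\to 0$. The paper runs this comparison directly for an \emph{arbitrary} compact $K\subset \de E\cap\de B$, obtaining $\eta\,\cH^{n-1}(K)\le {\bf H}_E(K)$ with no contradiction argument and no density decomposition; this also covers a case your reduction silently discards, namely a negative singular part of ${\bf H}_E$ concentrated on an $\cH^{n-1}$-null set (your ``the only way the bound can fail is a positive-measure set where the density dips below $\eta'$'' is not quite exhaustive, although the same comparison applied to such a null compact set, where it gives ${\bf H}_E(K)\ge 0$, disposes of it).

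The genuine defect in your write-up is the last paragraph. No volume bookkeeping is needed, because the argument of your second paragraph never invokes the minimality of $E$: the identity $\frac{d}{dt}\big|_{t=0}\P\big(\Phi^{X_\eps}_t(E)\big)=-\int_M g\big(X_\eps,\vec{\bf H}_E\big)$ is Lemma~\ref{lem:LocFin1stVar} (combined with Lemma~\ref{lem:HENorm}), valid for \emph{every} compactly supported $C^1$ field, whether or not its flow preserves the volume or stays inside $B$; and \eqref{eq:Xe1stvar} is a statement about $B$ and the constructed field, not about $E$ being a minimizer. Minimality has already been consumed in establishing those two lemmas. By contrast, the route you try to set up --- push in with $X_\eps$, restore the volume with $Y_\eps$, and contradict minimality --- cannot be closed as described: the volume defect is of order $\cH^{n-1}(K)$, so the restoring flow costs perimeter of order $|H_0|\,\cH^{n-1}(K)$, the same order as the putative gain $(\eta-\eta')\,\cH^{n-1}(K)$, and both scale linearly when you shrink $K$; you would need $|H_0|<\eta-\eta'$, which is simply not available. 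You correctly sensed this obstacle but left the proof conditional on ``arranging'' it, so as written the proposal is incomplete; the fix is to delete the third paragraph and recognize that the second one, i.e.\ the paper's own argument, already finishes the proof.
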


\begin{proof}

Fix any $K \subset \de E \cap \de B$.
For every $\varepsilon\in(0,1)$ let ${X}_\varepsilon$ be the $C^1$ vector field obtained by applying Lemma  
\ref{lem:maxprinc} with $N=B$, then by \eqref{eq:Xe1stvar} and \eqref{eq:suppXe} we get
\begin{eqnarray}
-\eta \int_{\de E} |{X}_\varepsilon| \, d \cH^{n-1} &\geq&  \frac{d}{dt}_{|t=0} \P(\Phi^{{X}_\varepsilon}_t 
(E))=- \int_{K_\varepsilon} g({X}_\varepsilon, {\nu}_E) \, d {\bf{H}}_E \nonumber \\
&=&  - \int_{K} g({X}_\varepsilon, {\nu}_B) \, d {\bf{H}}_E  - \int_{K_\varepsilon \setminus K} 
g({X}_\varepsilon, {\nu}_E) \, d {\bf{H}}_E\notag\\
&& \to - {\bf{H}}_E (K), \quad\text{ as $\varepsilon \to 0$,}  \label{eq:-etaint>}
\end{eqnarray}
where in the second identity we used that  $\nu_B=\nu_E$ on  $K\subset \de E \cap \de B$.
Using  \eqref{eq:XepsnuN} and \eqref{eq:|Xeps|<1}, we have
\begin{eqnarray}
-\eta \int_{\de E} |{X}_\varepsilon| \, d \cH^{n-1} &=&  -\eta \int_{K} |{X}_\varepsilon| \, d \cH^{n-1} -\eta 
\int_{\de E \cap (K_\varepsilon\setminus K)} |{X}_\varepsilon| \, d \cH^{n-1}  \nonumber \\
& &  \to  -\eta \, \cH^{n-1}(K) \quad\text{ as $\varepsilon \to 0$.} \label{eq:-etaint<}
\end{eqnarray}
In particular, in the limit as $\varepsilon \to 0$ we deduce from \eqref{eq:-etaint>} that
\begin{equation}\label{eq:etaHn-1>etadelta}
\eta\,  \cH^{n-1} (K) \leq {\bf{H}}_E (K).  
\end{equation}
Since this holds for every $K \subset \de E\cap \de B$, it is easily recognized that \eqref{eq:LBHE} follows.
\end{proof}

\subsection{Optimal regularity}\label{s:Opt}

In this section we prove that the boundary of an isoperimetric-isodiametric set $E$ is $C^{1,1}$ regular away from the singular set.

\begin{theorem}\label{p:regularity2}
Let $E\subset M$ be an isoperimetric-isodiametric set and $x_0 \in M$ be such that $\mu_g(E \setminus 
B_{\rad(E)}(x_0)) = 0$. Assume that $B:= B_{\rad(E)}(x_0))$ has smooth boundary.
Then, there exists $\delta>0$ such that $\de E \setminus B_{\rad(E) - \delta}(x_0)$ is $C^{1,1}$ regular.
\end{theorem}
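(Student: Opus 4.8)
The strategy is to upgrade the $C^{1,1/2}$ regularity of Proposition~\ref{p:regularity} to optimal $C^{1,1}$ regularity near the contact set $\de E \cap \de B$, following the circle of ideas from the classical obstacle problem but exploiting the geometric structure. Away from the obstacle $\de B$ we already have smoothness of constant mean curvature hypersurfaces, so the work is concentrated in a neighbourhood of a contact point $p \in \de E \cap \de B$. First I would fix such a $p$, pass to the exponential chart $\Exp_p\colon T_pM \to M$ at scale comparable to the injectivity radius, so that in these coordinates $g(0)=\Id$ and $\de B$ becomes a $C^{1,1}$ graph $x_n = \psi(x')$ with $\psi(0)=0$, $\nabla\psi(0)=0$, and $E$ lies on one side. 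By the analysis of Section~\ref{s:C1alpha} every tangent cone to $\de E$ at $p$ is a hyperplane, so $\de E$ is also a $C^{1,1/2}$ graph $x_n = u(x')$ with $u \geq \psi$ (after orienting correctly), $u(0)=0$, $\nabla u(0)=0$, and the non-contact set $\{u>\psi\}$ is exactly where $\de E$ has constant mean curvature $H_0$.

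\textbf{Key steps.} The heart of the argument is Proposition~\ref{p:quadratic}: the function $u-\psi$ leaves the obstacle at most quadratically, i.e.\ $0 \leq u(x') - \psi(x') \leq C|x'|^2$ near $0$. To prove this I would argue by a blow-up/barrier contradiction: if the quadratic bound fails along a sequence of scales $r_k \downarrow 0$, rescale $\de E$ by $r_k$; by the almost-minimizing property (Lemma~\ref{l:almost min}) and the $L^\infty$ mean curvature bound (Lemmas~\ref{lem:LocFin1stVar}, \ref{lem:LBHE} giving $\eta \le \mathbf{H}_E \le H_0$ on the contact set) the rescalings converge to a limit that is a non-negative solution of an obstacle problem against a hyperplane obstacle in $\R^n$ with bounded (in fact, in the limit, vanishing-order) mean curvature; the standard $C^{1,1}$ theory for the classical obstacle problem (cf.\ \cite{Caffarelli}) forces quadratic growth at the origin, contradicting the choice of $r_k$. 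Here the two-sided bound on $\mathbf{H}_E$ at the contact set — upper bound from \eqref{e:upper bound}, lower bound from Lemma~\ref{lem:LBHE} using $\inf_{\de B} H_B > -\infty$, which holds since $\de B$ is smooth — is exactly what makes the limit problem a genuine obstacle problem rather than something wilder. Once Proposition~\ref{p:quadratic} is in hand, I would combine two ingredients: Lemma~\ref{l:stima D2}, which gives Schauder-type interior second-derivative estimates for $u$ on the non-contact region $\{u>\psi\}$ (since there $u$ solves the smooth constant-mean-curvature PDE) with constants controlled in terms of the distance to the contact set and the quadratic detachment; and Lemma~\ref{l:distrib}, the elementary real-analysis fact that a $C^1$ function whose graph leaves its first-order Taylor expansion quadratically at every point, with a uniform constant, is $C^{1,1}$. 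Assembling these over a covering of $\de E \setminus B_{\rad(E)-\delta}(x_0)$ by such charts yields the $C^{1,1}$ conclusion.

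\textbf{Main obstacle.} The crux is establishing Proposition~\ref{p:quadratic}, the quadratic detachment estimate. Two technical points need care: first, getting the right limit problem from the blow-up requires that the mean curvature measure $\mathbf{H}_E$ restricted to the contact set be controlled on \emph{both} sides — the upper bound is immediate, but the lower bound genuinely uses the maximum-principle-type construction of Lemma~\ref{lem:maxprinc} adapted to the geometric setting, and one must check that the vector fields produced there interact correctly with the volume constraint (which is why the orthogonality Lemma~\ref{lem:HENorm} is needed: it lets us treat $\mathbf{H}_E$ as a scalar signed measure times $\nu_E$). Second, on the non-contact side one must verify that the classical CMC regularity, transplanted to the Riemannian chart, gives estimates that are uniform as one approaches the free boundary $\de(\de E \cap \de B)$ — this is where the almost-minimizer machinery pays off, since it provides density and $\eps$-regularity bounds up to the obstacle without needing to track the free boundary finely. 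Beyond this, one must also confirm (as the remark following Theorem~\ref{p:regularity2intro} notes, and as Remark~\ref{r:gerhardt} hints) that the geometric situation really is not subsumed by existing variational-inequality regularity theory, so that the obstacle-problem-style argument above is the correct route; but that is a matter of checking hypotheses rather than a step in the proof itself.
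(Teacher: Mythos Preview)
Your overall architecture coincides with the paper's: reduce to graphical coordinates near a contact point, establish the quadratic detachment Proposition~\ref{p:quadratic}, feed this into the interior Schauder estimate Lemma~\ref{l:stima D2} on the non-contact set, and conclude $C^{1,1}$ via the elementary Lemma~\ref{l:distrib}. The assembly of these three pieces is exactly as in the paper.

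The genuine difference is in how you propose to prove Proposition~\ref{p:quadratic}. You want to run a blow-up contradiction argument, relying on the two-sided $L^\infty$ bound on the mean curvature measure $\mathbf{H}_E$ obtained in Section~\ref{s:LinftyEst} (upper bound \eqref{e:upper bound}, lower bound Lemma~\ref{lem:LBHE}, orthogonality Lemma~\ref{lem:HENorm}) so that the rescaled limit falls under the classical flat obstacle problem. The paper instead proves Proposition~\ref{p:quadratic} by a direct, short PDE argument that does \emph{not} use Section~\ref{s:LinftyEst} at all: writing the problem in local coordinates one obtains a scalar second-order operator $L$ satisfying the one-sided variational inequality $Lu\le 0$ with $Lu=0$ off the contact set; the paper then decomposes $(u-\psi)\vert_{B_r}=w_1+w_2$ with $\cL w_1=0$ and $\cL w_2=Lu-\cL\psi-d$, bounds $w_2$ from below by the $L^\infty$ estimate \cite[Theorem~8.16]{GT} using only $Lu\le 0$, uses Harnack on the nonnegative $w_1$ to get $w_1\le Cr^2$, and finishes with the strong maximum principle on $w_2+C|x|^2$ over $B_r\setminus\Lambda(u)$. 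In particular the paper remarks explicitly that Section~\ref{s:LinftyEst} is ``strictly speaking \ldots\ not needed to prove the optimal regularity'' and is included for independent interest.

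What each approach buys: the paper's route is more elementary and self-contained---it needs only the one-sided inequality $Lu\le 0$, which is immediate from the first variation \eqref{e:1var-2versione}, plus standard linear elliptic tools (Harnack, maximum principle), with no compactness or limit problem to analyze. Your blow-up route is conceptually natural from the obstacle-problem viewpoint and would work, but it leans on the full varifold-theoretic machinery of Section~\ref{s:LinftyEst} (including the White-type maximum principle Lemma~\ref{lem:maxprinc}) and requires justifying convergence of the rescalings to the flat obstacle problem, which is extra work compared to the paper's three-line decomposition. If you want to follow the paper, you can drop the dependence on Section~\ref{s:LinftyEst} entirely and replace your blow-up step with the $w_1+w_2$ decomposition argument just described.
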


Note that the $C^{1,1}$ regularity is optimal, because in general one cannot expect to have continuity of the second 
fundamental form of $\de E$ across the free boundary of $\de E$, i.e.~the points on the relative (with respect to $\de 
B$) boundary of $\de E \cap \de B$.

\subsubsection{Co-ordinate charts}\label{s:co-ordinates}
We start fixing suitable co-ordinate charts.
Since $E$ is bounded, there exists $r_0>0$ such that for every $x_0 \in \de 
E$ there is a normal co-ordinate chart $(\Omega, \varphi)$ with $x_0 \in \Omega$ and
\[
\varphi: \Omega \subset M \to B^{n-1}_{r_0} \times (-r_0,r_0) \subset \R^{n-1} \times \R
\]
such that $\varphi(x_0) = 0$, $g(0) = \Id$ and $\nabla g(0) = 0$, where $g$ denotes the metric tensor in these 
co-ordinates.
Moreover, by the $C^{1,\frac12}$ regularity of $\de E$ established  in \S~\ref{s:C1alpha}, up to rotating these co-ordinate chart and 
eventually changing $r_0$, we can also assume that for every point $x_0 \in \de B \cap \de E$ also the following holds:
\begin{itemize}
\item $\de E$ and $\de B$ are, respectively, $C^{1, \frac12}$ and $C^{\infty}$ regular submanifolds,  given in this chart as 
graphs of functions $u, \psi: B^{n-1}_{r_0} \to \left(-\frac{r_0}{2}, \frac{r_0}{2}\right) \fn$ with $u \in C^{1,\frac12}$ and $\psi \in C^{\infty}$;

\item the functions $u$ and $\psi$ satisfy $\psi(x) \leq u(x)$ for every $x \in B^{n-1}_{r_0}$,
\[
u(0) = \psi(0) = |\nabla u(0)| = |\nabla \psi(0)| = 0, 
\]
and $\|u\|_{C^1}\leq \delta_0$ and $\|\psi\|_{C^1}\leq \delta_0$
for a fixed  $\delta_0>0$ which will be later assumed to be suitably small.
\end{itemize}

\medskip 

On every such a chart, the $C^{1,\frac12}$ regular submanifold $\de E \cap \Omega$ is given
as the set $\{(x, u(x)) : x \in B^{n-1}_{r_0} \}$.
We can consider the natural co-ordinate chart on it given by $(x,u(x)) \mapsto x \in B_r^{n-1}$ with induced metric tensor given by $h_{ij} := g(E_i, E_j)$, where $E_i := e_i + \de_i u \, e_n$ 
for $i=1,\ldots, n-1$.
In particular, 
\begin{equation}\label{e:metrica sul grafico}
 h_{ij} = g_{ij} + \de_i u \, g_{nj}+\de_ju \,g_{ni} + \de_i u\,\de_j u \, g_{nn}, \fn
\end{equation}
where $\de_iu = \de_iu(x)$ and $g_{ij} = g_{ij}(x,u(x))$. We will use the notation  $\tilde{h}$ for the function
$\tilde{h}:B^{n-1}_{r_0} \times \R \times \R^n \to \R^{n\times n}$ 
$$\tilde{h}_{ij}(x,z, p) = g_{ij}(x,z) + p_i g_{jn}(x,z) + p_j g_{ni}(x,z) + p_i \, p_j g_{nn}(x,z)$$
with the obvious relation that $h_{ij} = \tilde{h}_{ij}\big(x,u(x), \nabla u(x)\big)$. Note that, as a function in $(x,z, p)$, $\tilde{h}$ is smooth.

\subsubsection{First variation formula in local co-ordinates}
We consider next functions $\phi \in C^\infty_c(B^{n-1}_{r_0})$
and $\chi \in C^\infty_c(-r_0,r_0)$, and we assume that $\chi\vert_{(-\frac{r_0}{2},\frac{r_0}{2})} \equiv 1$, in such 
a way to assure that $\chi \circ u(x) =1$ for every $x \in B_{r_0}^{n-1}$ (by the assumptions made on $u$).
Consider the associated vector field $X (x,y) := \phi(x)\,\chi(y) \, e_n$ and note that $X \in 
C^{\infty}_c(\Omega,\R^n)$ and $X\vert_{\de E} = \phi(x)\, e_n$.
Called $F(t, p) := p + t\, X(p)$, there exists $\epsilon_0>0$ such that $F_t:= F(t, \cdot)$ is a diffeomorphism of $\Omega$ into itself for every $|t| 
\leq \epsilon_0$. 

 Consider now the variations of the
area along these one-parameter family of diffeomorphisms under the assumption $\phi\geq0$ on $\Lambda(u):=\{x\in B^{n-1}_{r_0} \,:\, u(x)=\psi(x)\}$.  \fn Arguing as in \eqref{e:1var-originale},  we get that
\begin{align}\label{e:1var}
0&\leq \int_{\de E} \div_{\de E} X\, \d \cH^{n-1}  - \fn H_0 \int_{\de E} g(X, \nu_E)\,\d\cH^{n-1}\notag\\
& = \int_{\Sigma} h^{ij}g(\nabla_{E_i}X, E_j)\,\d \cH^{n-1}  - \fn H_0 \int g(X, \nu_E)\,\d \cH^{n-1},
\end{align}
where in the second line we have used a simple computation for the tangential divergence of $X$.
Noting that
\begin{align*}
\nabla_{E_i} X & = \nabla_{e_i + \de_i u\, e_n} X = \nabla_{e_i}X + \de_i u\, \nabla_{e_n}X\\
& = \de_i\phi\,e_n + \phi\,\nabla_{e_i}e_n + \de_i u\, \phi \,\nabla_{e_n} e_n\\
& = \de_i\phi\,e_n + \phi\,\Gamma_{in}^k\,e_k + \de_i u\, \phi\, \Gamma_{nn}^k\,e_k,
\end{align*}
we get that
\begin{align}
h^{ij}\,g(\nabla_{E_i}X, E_j) & = h^{ij}\,\big(\de_i\phi\,g_{jn} + \phi\,\Gamma_{in}^k\,g_{jk} + \de_i u\, \phi\, 
\Gamma_{nn}^k\,g_{jk}\big)\notag\\
& \quad + h^{ij}\,\big(\de_ju\,\de_i\phi\,g_{nn} + \phi\,\de_ju\,\Gamma_{in}^k\,g_{kn} + \de_ju\,\de_i u\, \phi\, 
\Gamma_{nn}^k\,g_{kn}\big)\notag\\
& = \de_i\phi\,\big(h^{ij}\,g_{jn} + h^{ij}\,\de_ju\,g_{nn}\big)\notag\\
&\quad+ \phi\,\big(h^{ij}\,\de_i u\, \Gamma_{nn}^k\,g_{jk} + h^{ij}\,\de_ju\,\de_i u\, \Gamma_{nn}^k\,g_{kn})\notag\\
&\quad + \phi\,\big(h^{ij}\,\Gamma_{in}^k\,g_{jk} + h^{ij}\,\de_ju\,\Gamma_{in}^k\,g_{kn}\big).
\end{align}

In particular, by a simple integration by parts, \eqref{e:1var} reads as
\begin{equation}\label{e:1var-2versione}
\int_{B_r^{n-1}} \phi\,Lu\, \sqrt{\det(h_{ij})}\,\d x \leq 0 \quad
\forall\;\phi \in C_c^{1}(B_r^{n-1}), \; \phi\vert_{\Lambda(u)} \geq 0,
\end{equation}
where $\Lambda(u) := \{x \in B_r^{n-1}: u(x) = \psi(x) \}$ and
\begin{equation}\label{e:operatore}
Lu (x) := \div \big(A(x, u(x), \nabla u(x)) \nabla u(x) + b(x, u(x), \nabla u(x)) \big) - f(x)
\end{equation}
with
\begin{itemize}
\item $A = (a^{ij})_{i,j=1,\ldots, n-1} : B_r^{n-1} \times (-r,r) \times \R^{n-1} \to \R^{(n-1)\times (n-1)}$ is a 
smooth function given by
\[
a^{ij}(x, z, p) := g_{nn}(x,z)\,\tilde{h}^{ij}(x, z, p);
\]

\item $b: B_r^{n-1}\times (-r,r) \times \R^{n-1} \to \R^{n-1}$ is a smooth regular function given by
\[
b^i(x,z,p) := \tilde{h}^{ij}(x, z, p) \,g_{jn}(x,z);
\]

\item $f: B_r^{n-1} \to \R$ is a $C^{0,\alpha}$ regular function given by
\begin{align*}
f(x) & := h^{ij}\,\de_i u\, \Gamma_{nn}^k\,g_{jk} + h^{ij}\,\de_ju\,\de_i u\, \Gamma_{nn}^k\,g_{kn}\\
& \quad + h^{ij}\,\Gamma_{in}^k\,g_{jk} + h^{ij}\,\de_ju\,\Gamma_{in}^k\,g_{kn}  - \fn H_0 \,g\big(e_n, \nu_E\big),
\end{align*}
where $h^{ij} = \tilde{h}^{ij}\big(x,u(x), \nabla u(x)\big)$, $g_{ij}= g_{ij}\big(x, u(x)\big)$, $\Gamma_{ij}^k = 
\Gamma_{ij}^k\big(x,u(x) \big)$ and $\nu_E = \nu_E(x, u(x))$.
\end{itemize}

Explicitly expanding the divergence term in $Lu$ we deduce that
\begin{align}
Lu(x) = c^{ij} \de_{ij} u + d,
\end{align}
where 
\begin{equation}\label{e:cij}
c^{ij} = a^{ij} + g_{nn}\,\de_l u\, \de_{p^j}h^{il} + g_{ln}\, \de_{p_j}h^{il},
\end{equation}
with $\de_{p^j}h^{il} = \de_{p^j} \tilde{h}^{il}\big(x,u(x), \nabla u(x)\big)$,  $g_{ij}= g_{ij}\big(x, u(x)\big)$ \fn and $d \in 
C^{0,\alpha}(B_r^{n-1})$ is given by
\begin{align}\label{e:d}
d & = g_{nn}\de_i h^{ij} \de_j u +   g_{nn} \fn \de_z h^{ij} \de_i u\, \de_j u+ \de_i g_{nn} h^{ij} \de_j u + \de_n g_{nn} 
h^{ij}\de_i u\, \de_j u\notag\\
& \quad+ 
g_{jn}\de_i h^{ij} + g_{jn}\de_z h^{ij} \de_i u + \de_i g_{jn} h^{ij} + \de_n g_{jn}h^{ij} \de_i u - f
\end{align}
with the entries of $h$ and of its derivatives are computed in $\big(x,u(x), \nabla 
u(x)\big)$, while those of $g$ and the derivatives of the metric are computed in  $\big(x, u(x)\big)$.

Note that \eqref{e:1var-2versione} is equivalent to the following couple of differential relations:
\begin{equation}\label{e:1var-2versione-1}
\begin{cases}
Lu  \leq \fn 0 & \text{in } B_r^{n-1},\\
Lu =0 & \text{in } B_r^{n-1}\setminus \Lambda(u),
\end{cases}
\end{equation}
where the first inequality is meant in the sense of distribution, while the second equation is pointwise (also 
recalling that $u$ is smooth outside the contact set $\Lambda(u)$).

\subsubsection{Quadratic growth}

Note that by the explicit expressions of the previous subsection it turns out that $c^{ij}, d \in 
C^{0,\alpha}(B_{r_0}^{n-1})$ with uniform estimates (by the assumptions in \S~\ref{s:co-ordinates})
\begin{equation}\label{eq:cdC0alpha}
\|c^{ij}\|_{C^{0,\alpha}(B_{r_0}^{n-1})} + \|d\|_{C^{0,\alpha}(B_{r_0}^{n-1})} \leq C.
\end{equation}
Since $c(0) = \Id$ and $c^{ij}$ are  H\"older continuous, up to choosing a smaller $\delta_0>0$ (and consistently a smaller $r_0>0$) we can also ensure that $c^{ij}$ is 
uniformly elliptic with bounds
\[
\frac{\Id}{2}  \leq c \leq 2\, \Id.
\]

The next lemma shows that $u$ leaves the obstacle $\psi$ at most as a quadratic function of the distance to the
free-boundary point.

\begin{proposition}\label{p:quadratic}
Let $E \subset M$ be an isoperimetric-isodiametric set. Then, there exists a constant $C>0$ such that,
for every $x_0 \in \de E \cap \de B$, setting co-ordinates as in \S~\ref{s:co-ordinates}, we have that
\begin{equation}\label{e:quadratic}
u(x) - \psi(x) \leq C \, |x|^2 \quad \forall\; x \in B_{\frac{r_0}{2}}^{n-1}.
\end{equation}
\end{proposition}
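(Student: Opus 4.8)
The plan is to exploit the differential relations \eqref{e:1var-2versione-1} for $u$ together with the obstacle $\psi$, mimicking the classical quadratic-growth estimate for the obstacle problem but being careful that here we only have a variational \emph{inequality} with H\"older coefficients and a H\"older right-hand side. First I would reduce the statement to a claim about the nonnegative function $w := u - \psi$. Since $\psi$ is smooth, $L\psi$ makes sense pointwise and is a bounded function; writing $\tilde L v := c^{ij}\de_{ij}v$ (the principal, $C^{0,\alpha}$, uniformly elliptic part coming from \eqref{e:cij}), we have from \eqref{e:1var-2versione-1} that $\tilde L u = -d$ in $B_{r_0}^{n-1}\setminus\Lambda(u)$ and $\tilde L u \le -d$ in the distributional sense on all of $B_{r_0}^{n-1}$; hence $\tilde L w = \tilde L u - \tilde L \psi \le -d - \tilde L\psi =: -F$ distributionally, with $F \in C^{0,\alpha}$ and $\|F\|_{\infty}\le C$, and $\tilde L w = -F$ pointwise on the (open) non-contact set $\{w>0\}$. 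Also $w \ge 0$ everywhere, $w(0) = |\nabla w(0)| = 0$ (using $u(0)=\psi(0)=0$ and $|\nabla u(0)|=|\nabla\psi(0)|=0$).

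The core step is then a standard doubling/iteration argument on dyadic annuli centered at $0$. Set $S(\rho) := \sup_{B_\rho^{n-1}} w$ for $\rho \le r_0/2$; I want $S(\rho) \le C\rho^2$. Suppose not: then along a sequence $\rho_j \downarrow 0$ one has $S(\rho_j)/\rho_j^2 \to \infty$. Consider the blow-up $w_j(y) := w(\rho_j y)/S(\rho_j)$ on $B_1^{n-1}$; these satisfy $0\le w_j \le 1$, $\sup_{B_1} w_j = 1$, $w_j(0)=0$, and solve an elliptic inequality $c_j^{ij}\de_{ij}w_j \le -\rho_j^2 F(\rho_j\cdot)/S(\rho_j) =: -F_j$ with $\|F_j\|_\infty = \rho_j^2\|F\|_\infty/S(\rho_j) \to 0$ and $c_j^{ij}(y) = c^{ij}(\rho_j y) \to c^{ij}(0) = \Id$ uniformly on compacts, still satisfying $\tfrac12\Id \le c_j \le 2\Id$. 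By interior Schauder/$W^{2,p}$ estimates applied on the non-contact sets (where $w_j$ solves an honest equation) together with the barrier argument at contact points, the $w_j$ are equibounded in, say, $C^{1,\alpha}_{loc}(B_1^{n-1})$; passing to a subsequence, $w_j \to w_\infty$ locally uniformly, where $w_\infty \ge 0$, $w_\infty(0) = 0$, $\Delta w_\infty \le 0$ in the distributional sense (i.e.\ $w_\infty$ is superharmonic) and $w_\infty$ is not identically zero. But $w_\infty \ge 0$ and superharmonic with an interior zero at $0$ forces $w_\infty \equiv 0$ by the strong minimum principle, a contradiction. This yields $S(\rho)\le C\rho^2$ for $\rho$ small, hence $u(x)-\psi(x) = w(x) \le C|x|^2$ on $B_{r_0/2}^{n-1}$, after rescaling and using that the constants in \S\ref{s:co-ordinates} are uniform in the choice of $x_0 \in \de E\cap\de B$.

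An alternative, perhaps cleaner, route avoids compactness: directly construct a quadratic supersolution. Choose $\Lambda$ large and consider $v(x) := \psi(x) + A|x|^2$ on $B_\rho^{n-1}$ for suitable $A$. Using $\tfrac12\Id\le c\le 2\Id$ and $\|d\|_\infty,\|L\psi\|_\infty\le C$, one computes $Lv = c^{ij}\de_{ij}v + d \le -(n-1)A/2 \cdot 1 + (\text{lower order in } A) + C < 0 = Lu$ in $B_\rho^{n-1}$ once $A$ is chosen large (depending only on the uniform constants, not on $x_0$). On $\de B_\rho^{n-1}$ we need $v \ge u$: since $u(0)=\psi(0)=0$ and $\|u\|_{C^1},\|\psi\|_{C^1}\le\delta_0$, we have $u(x) \le \delta_0|x|$ and $\psi(x)\ge -\delta_0|x|$, so $v(x)-u(x) \ge A|x|^2 - 2\delta_0|x| = |x|(A|x| - 2\delta_0)$; this is $\ge 0$ on $|x| = \rho$ provided $\rho \ge 2\delta_0/A$ — which is the wrong direction, so this naive comparison only works on an annulus, not a full ball. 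The fix is to work on the contact set: by \eqref{e:1var-2versione-1}, $u$ is a subsolution of $L$ on all of $B_{r_0}^{n-1}$, so one compares $u$ and $v$ on each connected component $\Omega$ of $\{u>\psi\}\cap B_\rho^{n-1}$, where on $\de\Omega$ one has either $u=\psi\le v$ or $\de\Omega\subset\de B_\rho^{n-1}$. Choosing the radius and $A$ correctly (the borderline computation above shows one must take $\rho$ comparable to $\delta_0/A$ and then re-scale), the maximum principle gives $u \le v$, i.e.\ $u - \psi \le A|x|^2$. The main obstacle in either approach is bookkeeping the dependence on $x_0$: one must verify that all the constants — the ellipticity bounds on $c$, the $C^{0,\alpha}$ bounds \eqref{eq:cdC0alpha} on $c^{ij}$ and $d$, and the $C^1$ bound $\delta_0$ on $u,\psi$ — are uniform over the (compact) family of charts attached to points of $\de E\cap\de B$, which is exactly what \S\ref{s:co-ordinates} and the uniform $C^{1,1/2}$ regularity from \S\ref{s:C1alpha} provide; given that, the quadratic estimate holds with a single constant $C$.
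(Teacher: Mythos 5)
Both of your routes, as written, contain genuine gaps; neither one closes, and neither coincides with the paper's argument.

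The barrier route has a fatal sign error at its core. With the notation of \S~\ref{s:co-ordinates} and \eqref{e:cij}, for $v=\psi+A|x|^2$ one has
\[
Lv \;=\; c^{ij}\de_{ij}\psi \;+\; 2A\,\tr(c)\;+\;d \;\geq\; (n-1)A - C \;>\;0
\]
for $A$ large, since $c\geq \tfrac12\,\Id$: adding a large convex paraboloid makes $v$ a strict \emph{subsolution}, not a supersolution, so your claimed inequality $Lv\le -(n-1)A/2+\dots<0=Lu$ is false. To conclude $u\le v$ on a component $\Omega'$ of $\{u>\psi\}$ you need $\cL(u-v)\ge 0$ there, i.e. $Lv\le Lu=0$, and this cannot be arranged with $A>0$; with $A<0$ the boundary comparison fails because $v<\psi\le u$. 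This is not a bookkeeping issue about the radius $\rho$: it reflects the known fact that the \emph{upper} quadratic bound in obstacle-type problems is not a one-line comparison statement (the one-line barrier gives the non-degeneracy/lower bound, not the upper bound), and any proof must use that $0$ is a contact point with $u(0)-\psi(0)=|\nabla(u-\psi)(0)|=0$ — information your comparison never exploits. The blow-up route is in a reasonable spirit but has two unaddressed gaps. First, non-triviality of the limit: you normalize so that $\sup_{\overline{B_1}}w_j=1$, but convergence is only locally uniform in the open ball, so the maximum points may escape to $\de B_1$ and the limit $w_\infty$ may well be identically zero, destroying the contradiction; fixing this requires a doubling property $S(2\rho_j)\le C\,S(\rho_j)$ obtained by a careful selection of the radii (or normalizing on an interior ball after proving such doubling), which your argument does not provide. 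Second, the asserted $C^{1,\alpha}_{\loc}$ equiboundedness of the rescalings is not justified: the uniform $C^{1,\frac12}$ bound on $u$ does not survive division by $S(\rho_j)$ when $S(\rho_j)\gg\rho_j^2$, and obtaining a uniform modulus of continuity for $w_j$ up to the contact set is essentially equivalent to the growth estimate you are trying to prove; making it rigorous leads back to a harmonic-replacement decomposition.

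For comparison, the paper's proof is different from both routes and avoids these problems: on $B_r^{n-1}$ it splits $u-\psi=w_1+w_2$, with $\cL w_1=0$ and boundary data $u-\psi\ge0$ (so $w_1\ge0$), and $\cL w_2=Lu-\cL\psi-d$ with zero boundary data. The one-sided information $Lu\le 0$ from \eqref{e:1var-2versione-1} plus the $L^\infty$ estimate of \cite[Theorem 8.16]{GT} gives $w_2\ge -Cr^2$; hence $w_1(0)=-w_2(0)\le Cr^2$, and the Harnack inequality for the nonnegative $\cL$-harmonic $w_1$ yields $w_1\le Cr^2$ on $B_{r/2}^{n-1}$ — this is exactly where the contact-point condition enters. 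Finally, on $B_r^{n-1}\setminus\Lambda(u)$ the function $z=w_2+C|x|^2$ is an $\cL$-subsolution and the maximum principle, together with $w_2=-w_1\le0$ on $\Lambda(u)$, bounds $w_2$ from above by $Cr^2$. If you want to salvage your approach, the decomposition-plus-Harnack scheme is the missing ingredient in both of your routes.
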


\begin{proof}
Let us consider the homogeneous part of the operator $L$, i.e.~$\cL w := c^{ij} \de_{ij} w$.
Since $\cL (u - \psi) = Lu - \cL \psi - d$, for every $r  \leq \fn r_0$ we can write $(u - \psi)\vert_{B_r^{n-1}} = w_1 + w_2$ 
with
\begin{equation}
\begin{cases}
\cL w_1 = 0 & \text{in } B_{r}^{n-1},\\
w_1 = u - \psi & \text{on } \de B_{r}^{n-1},
\end{cases}
\end{equation}
and 
\begin{equation}
\begin{cases}
\cL w_2 = L u - \cL \psi - d& \text{in } B_{r}^{n-1},\\
w_2 = 0 & \text{on } \de B_{r}^{n-1}.
\end{cases}
\end{equation}

We start estimating $w_2$ from below.
Considering that $    \cL w_2 + \cL \psi + d = Lu \leq  0 \fn$, we can apply the $L^\infty$-estimate for elliptic equations 
\cite[Theorem 8.16]{GT}. In order to understand 
the dependence of the constant on the domain, we can rescale the variables in this way: $v:B_1^{n-1} \to \R$ given by 
$v(y) := r^{-2} w_2(r\,y)$.
Then, the equation satisfied by $v$ is
\[
\cL v(y) + \cL \psi(ry) + d(ry)= L u (ry)  \leq \fn 0.
\]
We can then conclude using \cite[(8.39)]{GT} that
\[
 \sup_{B_1^{n-1}} (-v) \fn \leq C\, \| \cL \psi(ry) + d(ry) \|_{L^{\frac{q}{2}}(B_1^{n-1})} \leq C,
\]
where now $C$ is a dimensional constant (only depending on $q>n-1$, which for us is any fixed exponent --
note that the hypothesis (8.8) in \cite[Theorem 8.16]{GT} is satisfied because we are considering the operator $\cL$ which has no lower order terms).
In particular, scaling back to $w_2$ we deduce that
\begin{equation}\label{e:w_2 from below}
w_2(x) \geq - C\,r^2 , \quad \forall\; x \in B_r^{n-1}.
\end{equation}

This clearly implies that $w_1(0) = u(0) -\psi(0) \fn - w_2(0) \leq C\,r^2$.
We can then use Harnack inequality for $w_1$ (cf. \cite[Theorem 8.20]{GT}) and conclude that
\begin{equation}\label{e:w_1 from above}
w_1(x) \leq C\,\inf_{B_{\frac{r}{2}}^{n-1}} w_1 \leq C\,w_1(0) \leq C\,r^2 , \quad \forall\; x \in B_{\frac{r}{2}}^{n-1}. \fn
\end{equation}

Finally note that in $B_r^{n-1}\setminus \Lambda(u)$ we have the equality
$\cL w_2 = - \cL \psi - d$. Therefore, the function
$z:= w_2 + C\,|x|^2$ satisfies $\cL z \geq 0$ for a suitably chosen constant $C = C(\|\cL \psi\|_{L^\infty}, 
\|d\|_{L^\infty}) $.
By the strong maximum principle \cite[Theorem 8.19]{GT} we deduce that
\[
\max_{B_r^{n-1}\setminus \Lambda(u)} z \leq \max_{\de (B_r^{n-1}\setminus \Lambda(u))} z
\leq C\, r^2,
\]
where we used that $z\vert_{\de B_r^{n-1}} = C\,r^2$ and that for every $x \in {\Lambda(u)}\cap B_r^{n-1}$ we have 
$z(x)= - w_1(x) + C\, |x|^2 \leq C\, r^2$ by the positivity of $w_1$.
In conclusion, we have that $u(x) - \psi(x) \leq |w_1(x)| + |w_2(x)| \leq C\,r^2$ for every $x \in 
B_{\frac{r}{2}}^{n-1}$.
Since $r \leq \fn r_0$ is arbitrary, by eventually changing the constant $C$ we conclude the proof of the proposition.
\end{proof}

\subsubsection{Curvature bounds away from the contact set}
Next we analyze the points $p \in \de E \setminus \de B$ which are close to $\de B$.
To this aim we fix a constant $s_0>0$ such that the following holds: if $\dist(p, \de 
E \cap \de B) = \dist(p,x_0) < s_0$, then $p$ belongs to the co-ordinate chart $\Omega$ around $x_0$ as fixed in 
\S~\ref{s:co-ordinates} and moreover, in these co-ordinates, $p = (x,z) \in B_{r_0}^{n-1} \times (-r_0, r_0)$
(necessarily with $x \not\in \Lambda(u)$) satisfies
\[
B_{4\delta}^{n-1}(x) \subset B_{r_0}^{n-1}  
\quad \text{with}\quad \delta:= \frac{\textrm{dist}(x, \Lambda(u))}{2}.
\]
Note that the existence of such a constant $s_0>0$  is ensured \fn by a simple compactness argument.
Recall also that by the quadratic growth proved in the previous section we now that
\[
\|u\|_{L^\infty(B_{2\delta}^{n-1}(x))} \leq C\,\delta^2.
\]

The following lemma gives a curvature bound for $\de E$ in points $p$ as above.

\begin{lemma}\label{l:stima D2}
Let $p \in \de E \setminus \de B$ satisfy $\dist(p, \de E \cap \de B) < s_0$.
Fixing $x_0 \in \de E \cap \de B$ and the corresponding co-ordinate chart as in \S~\ref{s:co-ordinates} with 
the notation fixed above, we then conclude that
\begin{equation}\label{e:curvature}
\|D^2 u\|_{L^\infty(B_{\delta}^{n-1}(x))} \leq C,
\end{equation}
where $C>0$ is a dimensional constant.
\end{lemma}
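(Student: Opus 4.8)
The plan is to notice that, on the annular region where $p$ sits, $u$ solves a genuinely \emph{linear}, uniformly elliptic equation with $C^{0,\alpha}$ coefficients, and then to extract the Hessian bound from interior Schauder estimates applied after an elliptic rescaling at scale $\delta$; the rescaling is rendered harmless precisely by the quadratic growth of Proposition~\ref{p:quadratic}. The only thing one has to be careful about is that all the constants produced are uniform in $p$ (equivalently, in $\delta$).

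First I would fix $p$, the nearest point $x_0\in\de E\cap\de B$ and the co-ordinate chart around $x_0$ as in \S\ref{s:co-ordinates}, write $p=(x,u(x))$ with $x\notin\Lambda(u)$, and recall $\delta=\tfrac12\dist(x,\Lambda(u))$, so that by construction $B_{2\delta}^{n-1}(x)\cap\Lambda(u)=\emptyset$ and $B_{4\delta}^{n-1}(x)\subset B_{r_0}^{n-1}$. On $B_{2\delta}^{n-1}(x)$ the function $u$ is smooth and, by the second line of \eqref{e:1var-2versione-1}, satisfies $Lu=0$, i.e.
\[
c^{ij}(x')\,\de_{ij}u(x') + d(x') = 0\qquad\text{for }x'\in B_{2\delta}^{n-1}(x),
\]
where now $c^{ij}(x'):=c^{ij}\big(x',u(x'),\nabla u(x')\big)$ and $d(x'):=d\big(x',u(x'),\nabla u(x')\big)$ are regarded as functions of $x'$ alone. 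Since by \S\ref{s:C1alpha} the map $x'\mapsto(x',u(x'),\nabla u(x'))$ is $C^{0,\frac12}$, composing it with the smooth coefficient maps gives honest $C^{0,\alpha}$ functions obeying the uniform bound \eqref{eq:cdC0alpha}, and $\tfrac{\Id}{2}\le c\le 2\,\Id$.

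Next I would rescale: set $v\colon B_2^{n-1}\to\R$, $v(y):=\delta^{-2}\,u(x+\delta y)$. The quadratic growth recalled just before the lemma gives $\|v\|_{L^\infty(B_2^{n-1})}\le C$ with $C$ independent of $p$, and a direct computation shows $v$ solves the linear uniformly elliptic equation $c^{ij}_\delta(y)\,\de_{ij}v(y)+d_\delta(y)=0$ on $B_2^{n-1}$, with $c^{ij}_\delta(y):=c^{ij}(x+\delta y)$ and $d_\delta(y):=d(x+\delta y)$; the ellipticity bounds are untouched, and since $\delta$ stays bounded (because $E$ is bounded and by the choice of $s_0$) one has $\|c^{ij}_\delta\|_{C^{0,\alpha}(B_2^{n-1})}+\|d_\delta\|_{C^{0,\alpha}(B_2^{n-1})}\le C$ with the constant of \eqref{eq:cdC0alpha}, rescaling only contracting the Hölder seminorms by $\delta^\alpha\le1$. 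As $v$ is smooth in $B_2^{n-1}$, the interior Schauder estimate \cite[Theorem~6.2]{GT} on $B_1^{n-1}\subset\subset B_2^{n-1}$ then yields $\|v\|_{C^{2,\alpha}(B_1^{n-1})}\le C\big(\|v\|_{L^\infty(B_2^{n-1})}+\|d_\delta\|_{C^{0,\alpha}(B_2^{n-1})}\big)\le C$, with $C$ depending only on $n$, the ellipticity constants and \eqref{eq:cdC0alpha}. In particular $\|D^2v\|_{L^\infty(B_1^{n-1})}\le C$, and undoing the scaling $D^2u(x+\delta y)=D^2v(y)$ gives $\|D^2u\|_{L^\infty(B_\delta^{n-1}(x))}=\|D^2v\|_{L^\infty(B_1^{n-1})}\le C$, which is \eqref{e:curvature}.

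The main (and essentially only) obstacle is the uniformity issue: one must ensure that neither the ellipticity constants, nor the $C^{0,\alpha}$ norm of the coefficients, nor the Schauder constant degenerate as $p\to\de B$, i.e. as $\delta\to0$. This is exactly what the scale-invariant set-up buys us — the quadratic growth keeps the rescaled datum $\|v\|_{L^\infty(B_2^{n-1})}$ bounded, while \eqref{eq:cdC0alpha} and $\tfrac{\Id}{2}\le c\le 2\,\Id$ are manifestly stable under $x'\mapsto(x'-x)/\delta$. A secondary, routine point to verify along the way is that, although $u$ and $\nabla u$ are only $C^{0,\frac12}$, the coefficients $c^{ij}$ and $d$ — being smooth functions of $(x',u,\nabla u)$ — are genuinely $C^{0,\alpha}$ functions of $x'$, which is already recorded in \eqref{eq:cdC0alpha}.
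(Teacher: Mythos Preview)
Your proof is correct and follows essentially the same approach as the paper: both identify that $Lu=0$ on $B_{2\delta}^{n-1}(x)$, rewrite this as the linear equation $c^{ij}\de_{ij}u=-d$ with uniformly $C^{0,\alpha}$ data, and invoke interior Schauder estimates \cite[Theorem~6.2]{GT} together with the quadratic bound $\|u\|_{L^\infty(B_{2\delta}^{n-1}(x))}\le C\delta^2$. The only cosmetic difference is that you carry out an explicit blow-up $v(y)=\delta^{-2}u(x+\delta y)$ and apply the Schauder estimate on the unit ball, whereas the paper applies the same theorem directly in its scale-invariant weighted-norm formulation on $B_{2\delta}^{n-1}(x)$; the two are equivalent, as the weighted norms in \cite{GT} are built precisely to encode this rescaling.
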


\begin{proof}
 Since on $B_{4\delta}^{n-1}\subset B^{n-1}_{r_0} \setminus \Lambda(u)$ the \emph{equation} $Lu=0$ is satisfied, \fn
the proof is a consequence of the basic interior Schauder estimates for second order elliptic 
equations (cp.~\cite[Theorem~6.2]{GT}).  More precisely we write the equation as $\cL u=-d$ where $d \in C^{0,\alpha}$ was defined is  \eqref{e:d} and satisfies \eqref{eq:cdC0alpha}, and we apply  \cite[Theorem~6.2]{GT}) to such an equation. \fn
Indeed, by simply recalling the definition of the norms in \cite[Theorem~6.2]{GT} we have that,
setting $\sfd_y:= \dist(y,\de B_{2\delta}^{n-1}(x))$
\begin{align}
\delta^2\,\|D^2 u\|_{L^\infty(B_{\delta}^{n-1}(x))} & \leq C\, \left( \|u\|_{L^\infty(B_{2\delta}^{n-1}(x))} +
\sup_{y\in B_{2\delta}^{n-1}(x)}\sfd_y^2 |d(y)| \right)\notag\\
& \quad + C\,\sup_{y,z\in B_{2\delta}^{n-1}(x)}\min\left\{\sfd_y,\sfd_z\right\}^{2+\alpha} 
\frac{|d(y)-d(z)|}{|y-z|^\alpha}\notag\\
& \leq C\, \left( \|u\|_{L^\infty(B_{2\delta}^{n-1}(x))} + \delta^2 
\|d\|_{L^\infty(B_{2\delta}^{n-1}(x))}\right)\notag\\
&\quad+ C\,\delta^{2+\alpha} [d]_{C^{0,\alpha}(B_{2\delta}^{n-1}(x))} \leq C\, \delta^2.\notag\qedhere
\end{align}
\end{proof}

\subsubsection{$C^{1,1}$-regularity}

In this section we finally prove  Theorem~\ref{p:regularity2}. 
The proof is based on the following property: by Proposition~\ref{p:quadratic} and Lemma~\ref{l:stima D2}, there exists 
$\delta>0$ such that for every $ x_0 \fn \in \de B\cap \de E$ there exists   $r_{0}>0$ \fn satisfying the following: fixing 
co-ordinates as in \S~\ref{s:co-ordinates},  
\begin{equation}\label{eq:uxyquad0}
|u(y)-u(x)-\nabla u(x) \cdot (y-x)| \leq \frac{\bar{C}}{2} \; |x-y|^2, \quad \forall \;x, y \in B_{r_{0}}(x_0).
\end{equation}  \fn
Indeed, if $x \in \de E \cap \de B$, then  centering the  co-ordinates at  $x$ we have  $0=u(0) = |\nabla u(0)|$, and 
\eqref{eq:uxyquad0} is a direct consequence of \eqref{e:quadratic}. On the other hand, if  $x \notin \de E \cap \de B$, \fn 
then setting the co-ordinates as in Lemma~\ref{l:stima D2}, we deduce \eqref{eq:uxyquad0} from \eqref{e:curvature}.

The conclusion of Theorem~\ref{p:regularity2} is then a direct consequence of the following lemma combined with a standard partition of unity argument. 

\begin{lemma}\label{l:distrib}
Let $\Omega \subset \R^n$ be an open subset and let  $u:\Omega \to \R$ be a $C^1$-function. Assume there exist 
$\bar{C}>0$ and a  countable covering $\{B_i\}_{i \in \N}$ of $\Omega$ made  by open balls $B_i\subset \Omega$ such that 
for every $x,y \in B_i$ it holds 
\begin{equation}\label{eq:uxyquad}
|u(y)-u(x)-\nabla u(x) \cdot (y-x)| \leq \frac{\bar{C}}{2} \; |x-y|^2.
\end{equation}
Then the distribution $\de^2_{ij} u\in {\mathcal D}'(\Omega)$ is represented by an $L^{\infty}(\Omega)$ function, and 
$$\|\de ^2_{ij} u\|_{L^\infty(\Omega)} \leq \bar{C}.$$
\end{lemma}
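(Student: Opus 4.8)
The plan is to show that the quadratic bound \eqref{eq:uxyquad} on each ball of the covering forces the distributional second derivatives to be bounded by $\bar C$, via a difference-quotient argument. First I would fix an arbitrary ball $B_i$ of the covering and work entirely inside it, since the conclusion $\|\partial^2_{ij}u\|_{L^\infty(\Omega)}\le \bar C$ follows once we bound $\|\partial^2_{ij}u\|_{L^\infty(B_i)}\le\bar C$ for each $i$ (the covering is countable and its union is $\Omega$, so an $L^\infty$ bound on each piece gives the bound on the whole). On $B_i$, consider the first-order difference quotients in a coordinate direction $e_k$: for $h$ small, $\partial_j u$ is $C^0$, so set
\[
D_k^h v(x) := \frac{v(x+he_k)-v(x)}{h}.
\]
Applying \eqref{eq:uxyquad} with $y = x+he_k$ and also with the roles of $x$ and $x+he_k$ swapped, and subtracting, one gets
\[
\big| \nabla u(x+he_k)\cdot e_k - \nabla u(x)\cdot e_k \big| \le \bar C\,|h|,
\]
that is, $|D_k^h(\partial_j u)(x)| \le \bar C$ for all $j$ once one takes the $j$-th component — more precisely, testing the two inequalities with increments $\pm h e_k$ and combining shows $|\partial_j u(x+he_k) - \partial_j u(x)|\le \bar C |h|$ componentwise. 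Hence each $\partial_j u$ is Lipschitz on $B_i$ with constant $\bar C$.

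Once $\partial_j u$ is Lipschitz with constant $\bar C$ on $B_i$, by Rademacher's theorem it is differentiable a.e., its a.e. gradient $\nabla(\partial_j u)$ is bounded by $\bar C$ in $L^\infty(B_i)$, and this a.e. derivative represents the distributional derivative $\partial_i\partial_j u$ on $B_i$ (a Lipschitz function is in $W^{1,\infty}_{loc}$ and its weak gradient coincides with its classical a.e. gradient). Therefore $\partial^2_{ij}u \in L^\infty(B_i)$ with $\|\partial^2_{ij}u\|_{L^\infty(B_i)}\le\bar C$. Since the $\{B_i\}$ cover $\Omega$ and the distributional derivative is a local object, we conclude $\partial^2_{ij}u\in L^\infty(\Omega)$ with $\|\partial^2_{ij}u\|_{L^\infty(\Omega)}\le\bar C$.

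The only mild subtlety — and the step I would write most carefully — is extracting the Lipschitz estimate on $\nabla u$ cleanly from \eqref{eq:uxyquad}: one must use the hypothesis \emph{twice} (once expanding around $x$, once around $y$) so that the two linear terms $\nabla u(x)\cdot(y-x)$ and $\nabla u(y)\cdot(x-y)$ combine to give the difference $(\nabla u(y)-\nabla u(x))\cdot(y-x)$, bounded by $\bar C|x-y|^2$; then choosing $y-x = h e_k$ and dividing by $|h|$ isolates the $k$-th partial derivative of each component of $\nabla u$ and yields the Lipschitz constant $\bar C$ for $\nabla u$, hence for each $\partial_j u$. Everything else is a routine invocation of Rademacher's theorem and the identification of weak and a.e. derivatives for Lipschitz functions, together with the observation that membership in $L^\infty$ with a given bound is preserved under taking a countable cover.
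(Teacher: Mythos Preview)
There is a genuine gap in your argument at the step where you pass from the symmetrized inequality to the Lipschitz bound on \emph{each} partial derivative. From \eqref{eq:uxyquad} applied at $x$ and at $y=x+he_k$ and summed, you correctly obtain
\[
\big|\big(\nabla u(x+he_k)-\nabla u(x)\big)\cdot e_k\big|\le \bar C\,|h|,
\]
but this is only $|\partial_k u(x+he_k)-\partial_k u(x)|\le \bar C|h|$: the dot product with $e_k$ singles out the $k$-th component of $\nabla u$, not the $j$-th. Your claim that the bound holds ``componentwise'' for every $j$ does not follow from testing with increments $\pm he_k$; both tests yield the same information about $\partial_k u$ only. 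So the argument as written establishes $|\partial_{kk}u|\le\bar C$ in the distributional sense for each $k$, but says nothing about the mixed derivatives $\partial_{ij}u$ with $i\neq j$.

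The missing ingredient is a polarization step, and this is exactly what the paper does. The symmetrized inequality actually gives, for every unit vector $v$, the bound $|\partial_v(\partial_v u)|\le\bar C$ distributionally (equivalently, $-\bar C\,\mathrm{Id}\le D^2u\le\bar C\,\mathrm{Id}$ as distributions). The paper encodes this as a bound on the quadratic form $Q^\varphi(v,v):=\int u\,\partial^2_{vv}\varphi$, namely $|Q^\varphi(v,v)|\le\bar C|v|^2\|\varphi\|_{L^1}$, and then uses the polarization identity for the symmetric bilinear form $Q^\varphi$ to deduce $|Q^\varphi(e_i,e_j)|\le\bar C\|\varphi\|_{L^1}$, which via Riesz representation gives $\partial^2_{ij}u\in L^\infty$ with the stated bound. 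Your Rademacher route can be salvaged along the same lines (e.g.\ by first mollifying, applying the pointwise polarization of the Hessian matrix, and passing to the limit), but it does require this extra step; the direct ``componentwise'' reading is incorrect.
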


\begin{proof}
By  a standard partition of unity argument   it is enough to prove that for every ball $B_i$ the restriction of the 
distribution $\de^2_{ij} u \llcorner B_i$ is represented by an $L^{\infty}(B_i)$ function, and $\|\de ^2_{ij} 
u\|_{L^\infty(B_i)} \leq \bar{C}.$
In order to simplify the notation let us fix $i\in \N$ and denote $B:=B_i$.
For every fixed $\varphi\in C^\infty_c(B)$ let $Q^{\varphi}:\R^n \times \R^n \to \R$ be defined by 
\begin{equation}\label{eq:defQphi}
Q^\varphi(v_1,v_2):=\int_{B} u \; \frac{\de^2 \varphi}{\de v_1 \de v_2}  \; .
\end{equation}
We first claim that 
\begin{equation}\label{eq:estQphi}
|Q^{\varphi}(v,v)| \leq \bar{C} |v|^2 \|\varphi\|_{L^1(B)}, \quad \forall \varphi\in C^\infty_c(B),\; \forall v\in \R^n,
\end{equation}
where $\bar{C}$ is given is \eqref{eq:uxyquad}.  In order to prove \eqref{eq:estQphi}, we write  \eqref{eq:uxyquad} 
exchanging $x$ and $y$ and sum up to get 
\begin{equation}\nonumber 
|(\nabla u(x)- \nabla u(y)) \cdot (x-y)| \leq \bar{C} \; |x-y|^2.
\end{equation}
Choosing $y=x+tv$ in the last estimate,  we get
\begin{equation} \label{eq:nablauxy}
\frac{|(\nabla u(x+tv)- \nabla u(x)) \cdot v |}{t} \leq \bar{C}, \quad \forall v\in S^{n-1}, \; \forall  t\in (0, 
1-|x|).
\end{equation}
Now using that $u$ is $C^1$ and  $\varphi\in C^\infty_c(B)$, we can integrate by parts to get
\begin{eqnarray}
\left| \int_{B} u  \; \frac{\de^2 \varphi}{\de v \de v}  \right| &=& \left| \int_{B} \frac{ \de u}{\de v} \; \frac{\de 
\varphi}{\de v}  \right| = \left| \int_{B} (\nabla u(x) \cdot v) \;  \lim_{t\downarrow 0} \frac{\varphi(x+tv)- 
\varphi(x)}{t} \, dx \right| \nonumber \\
&=&   \left|  \lim_{t\downarrow 0} \int_{B} \left( \frac{\nabla u(x-tv)-\nabla u(x) }{t}\cdot v \right) \; \varphi(x)\;  
dx  \right|  \nonumber \\
& \leq &\bar{C} \, \|\varphi\|_{L^1(B)}, \quad \forall v \in S^{n-1}, \label{eq:pfClaim1}
\end{eqnarray}
where in the second line we used the change of variable $x\mapsto x+tv$, and the last inequality follows from  
\eqref{eq:nablauxy}. The inequality \eqref{eq:pfClaim1} proves our claim \eqref{eq:estQphi}.
\\We now show that \eqref{eq:estQphi} implies that  the distribution $\de^2_{ij} u$ is represented by an $L^{\infty}(B)$ 
function and $\|\de ^2_{ij} u\|_{L^\infty(B)} \leq \bar{C}.$
To this aim observe that for every $\varphi \in C^\infty_c(B)$, by the Schwartz lemma,  the map  $Q^\varphi:\R^n \times 
\R^n \to \R$ defined in \eqref{eq:defQphi} is a symmetric bilinear form. Using \eqref{eq:estQphi}, by polarization of 
$Q^\varphi$ we infer
\begin{equation}\label{eq:polarizQ}
|Q^\varphi(\de_i, \de_j)|=\frac{1}{4} |Q^{\varphi}(\de_i+\de_j,\de_i+\de_j) -  Q^{\varphi}(\de_i-\de_j,\de_i-\de_j) | 
\leq \bar{C} \, \|\varphi\|_{L^1(B)},
\end{equation}
for every $ i,j=1,\ldots,n$. But now
$$Q^\varphi(\de_i, \de_j)= <\partial^2_{ij} u, \varphi>_{{\mathcal D}', {\mathcal D}} \quad , $$
where $<\cdot , \cdot>_{{\mathcal D}', {\mathcal D}}$ denotes the pairing between  distributions and $C^\infty_c$-test 
functions. Therefore  \eqref{eq:polarizQ} combined with Riesz representation Theorem  concludes the proof.
\end{proof}

The arguments above prove  also the  following slightly more general regularity result for isoperimetric regions inside a $C^{2}$-domain. In order to state it, for a subset $A\subset M$ and for some $\delta>0$, let us denote with $B_{\delta}(A)=\{x\in M \,:\, \inf_{y\in A} \sfd(x,y) \leq \delta \}$ the $\delta$-tubular neighborhood of $A$.

\begin{theorem}[$C^{1,1}$-regularity of isoperimetric regions inside a $C^{2}$-domain]\label{thm:regIsopObst}
Let $(M,g)$ be a Riemannian manifold, let $\Omega \subset M$ be an open subset with $C^{2}$ boundary $\partial \Omega$ and fix $v \in (0,\mu_{g}(\Omega))$. Let $E\subset \Omega$ be a finite perimeter set with $\mu_{g}(E)=v$ and minimizing the perimeter among regions contained in $\Omega$, i.e.
$$
\Pe(E)= \inf \{ \Pe(F) \,:\, F\subset \Omega,\; \mu_{g}(F)=v \}.
$$
Then, there exists $\delta>0$ such that $\de E \cap B_{\delta}(\de \Omega)$ is $C^{1,1}$ regular.
\end{theorem}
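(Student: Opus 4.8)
The plan is to run, essentially verbatim, the scheme developed in \S\ref{s:C1alpha} and \S\ref{s:Opt} for isoperimetric--isodiametric regions, with $\de\Omega$ now playing the role of the enclosing sphere $\de B$; the situation is in fact simpler, since the extrinsic-radius term $\|X\|_\infty\,\P(E)$ that complicated the first variation in \eqref{e:destra}--\eqref{e:sinistra} is absent here. First I would record, by the interior regularity for volume-constrained perimeter minimizers (\cite[Corollary~3.8]{Morgan}), that $\de E$ is a smooth hypersurface of constant mean curvature $H_0$ away from $\de\Omega$ and outside a closed singular set of dimension $\le n-8$. Next, arguing exactly as in Lemma~\ref{l:almost min} (using a Giusti-type volume-fixing device localised away from the perturbation region, together with the comparison $\P(\Omega)\le\P(G)+C\,r^{n}$ for $G\sdif\Omega\subset\subset B_r(x)$, which only requires $\de\Omega\in C^{1,1}$), one shows that $E$ is an almost-minimiser of the perimeter in $M$. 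Then, as in the proof of Proposition~\ref{p:regularity}, for every $p\in\de E\cap\de\Omega$ the blow-ups of $E$ at $p$ converge to a perimeter-minimising cone contained in a half-space; by the Bernstein theorem such a cone is flat, so $p$ lies in the reduced boundary of $E$, and Tamanini's theorem \cite[Theorem~1]{Tam} gives that $\de E$ is $C^{1,\frac12}$ in a neighbourhood of $\de E\cap\de\Omega$ (hence, together with the interior regularity, $C^{1,\frac12}$ off the singular set).

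Having the $C^{1,\frac12}$ regularity, I would introduce the local coordinate charts of \S\ref{s:co-ordinates}: near a contact point $x_0\in\de E\cap\de\Omega$, write $\de E$ as the graph of $u\in C^{1,\frac12}$ and $\de\Omega$ as the graph of $\psi$, with $\psi\le u$ and $u(0)=\psi(0)=|\nabla u(0)|=|\nabla\psi(0)|=0$; since $\de\Omega\in C^{2}$ we have $\psi\in C^{2}$, which is all that is needed below. Taking variations along vector fields $X=\phi(x)\,\chi(y)\,e_n$ with $\phi\ge 0$ on the contact set $\Lambda(u):=\{u=\psi\}$ --- these keep $E$ inside $\Omega$ for small $t\ge 0$, cf.\ \eqref{e:punto all'interno} --- and using the first variation of the area with the constant-mean-curvature term, one obtains, exactly as in \eqref{e:1var}--\eqref{e:1var-2versione-1}, the differential relations $Lu\le 0$ in $B^{n-1}_{r_0}$ and $Lu=0$ in $B^{n-1}_{r_0}\setminus\Lambda(u)$, where $L$ is the uniformly elliptic quasilinear operator of \eqref{e:operatore}, with coefficients $c^{ij},d\in C^{0,\alpha}$. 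It is crucial here that the competitors in the volume-constrained problem must lie in $\Omega$, so the one-sided nature of the admissible variations produces an inequality, rather than an equation, on $\Lambda(u)$.

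From this point the argument is identical to \S\ref{s:Opt}: Proposition~\ref{p:quadratic} applies with $\psi$ as obstacle and yields $u(x)-\psi(x)\le C\,|x|^{2}$ near each contact point (its proof uses only $\cL\psi\in L^\infty$, hence $\psi\in C^{2}$ suffices); Lemma~\ref{l:stima D2}, via interior Schauder estimates for $\cL u=-d$, gives $\|D^{2}u\|_{L^\infty(B^{n-1}_{\delta}(x))}\le C$ at points $p\notin\de\Omega$ close to $\de E\cap\de\Omega$; combining the two produces the uniform pointwise second-order estimate \eqref{eq:uxyquad0} on a fixed-size ball around each contact point. Finally Lemma~\ref{l:distrib} upgrades this to $\de^{2}_{ij}u\in L^{\infty}$, i.e.\ $u\in C^{1,1}$, and a covering of a tubular neighbourhood $B_\delta(\de\Omega)$ of the contact set by such balls, with a partition of unity, produces the desired $\delta>0$.

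The hard part will not be any single new estimate but rather checking that the almost-minimality comparison and the admissibility of the one-sided variations survive when $\de\Omega$ is merely $C^{2}$ (indeed $C^{1,1}$): this is exactly the role played by \eqref{e:smooth qm} and \eqref{e:punto all'interno} in the ball case, and since that argument only used a $C^{1,1}$ chart flattening the boundary, it transfers verbatim. Everything else is a routine transcription of \S\ref{s:C1alpha}--\S\ref{s:Opt}, replacing $\de B$ by $\de\Omega$ and dropping the extrinsic-radius term.
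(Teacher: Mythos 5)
Your proposal is correct and is essentially the paper's own argument: the paper proves Theorem~\ref{thm:regIsopObst} precisely by observing that the machinery of \S\ref{s:C1alpha}--\S\ref{s:Opt} (almost-minimality and Tamanini's theorem, the one-sided variational inequality $Lu\le 0$ on the contact set, quadratic growth, interior Schauder estimates and Lemma~\ref{l:distrib}) transfers verbatim with $\de\Omega$ in place of $\de B$, the obstacle regularity needed being only $\psi\in C^{1,1}$. Your additional remarks --- that the extrinsic-radius term in the first variation drops out and that \eqref{e:smooth qm} and \eqref{e:punto all'interno} only require a $C^{1,1}$ boundary chart --- are exactly the points the paper implicitly relies on, so no gap remains.
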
 

{
\begin{remark}\label{r:gerhardt}
Theorem \ref{thm:regIsopObst} already appeared in \cite[Proposition, Pag. 418] {WhiteJDG91}. On the other hand, the arguments in the proof of \cite[Proposition, Pag. 418] {WhiteJDG91} are very concise (line 7, pag. 419 in \cite{WhiteJDG91})
and basically consist in referring to  the work of Gerhardt \cite{Gerhardt}. Nevertheless, it seems that  one of the hypotheses of  \cite{Gerhardt}
is not met for the operator $H$ in \cite{WhiteJDG91}. Indeed,   $H$ is
the Euler-Lagrange operator of the functional 
\[
\Phi(u) = \int L(x, u(x), \nabla u(x)) dx,
\]
and a simple computation shows that
\[
H(u) = \frac{\partial L}{\partial z} (x, u(x), \nabla u(x)) - {\rm div} \Big(
\frac{\partial L}{\partial p} (x, u(x), \nabla u(x))\Big),
\]
where we named the variables as  $L = L(x,z,p)$.
Now the operator $H$ is of the form considered in  \cite{Gerhardt}
(here there is a conflict of notation between the two papers,
therefore we put a bar for the notation in \cite{Gerhardt})
\[
\bar A u + \bar H = -  {\rm div} \Big(
\bar a (x, u(x), \nabla u(x))\Big) + \bar H.
\]
In our case the vector field $\bar a$ is given by $\frac{\partial L}{\partial p}$
and the forcing term $\bar H$ is given by $\frac{\partial L}{\partial z} (x, u(x), \nabla u(x))$.
In  \cite{Gerhardt} the forcing term $\bar H$ is assumed to be $W^{1,\infty}$
(see equation (5) in  \cite{Gerhardt}), which in the present situation would be verified only knowing
already that $u \in W^{2,\infty}$, which is however what one wants to deduce.
 
We do not exclude that going through the proofs of  \cite{Gerhardt} one could overcome such a difficulty; however we think that the  approach of the present paper could be
of independent interest, especially because it is self-contained and based
on an elementary use of Schauder estimates. 
\end{remark}
}

\subsection{Further comments}
We have proven the above regularity of the isoperimetric-isodiametric sets $E \subset M$ under the assumptions that the 
enclosing ball $B= B_{\rad(E)}(x_0)$ has smooth boundary. Actually, the following is true and is a direct consequence 
of the argument used above.

\begin{itemize}
\item[(A)] If $\de B \in C^{1,\alpha}$ for some $\alpha\in (0,1]$, then in a neighbourhood of $\de B$ the 
isoperimetric-isodiametric sets have the boundary $\de E$ which are $C^{1,\alpha}$ regular.
\end{itemize}

Indeed, under the assumption in (A), the arguments in Lemma~\ref{l:almost min} show that $\de E$ is 
$C^{1,\kappa}$ regular in a neighbourhood of $\de B$ for $k = \min\{\alpha, \frac12\}$. Moreover, a careful inspection 
of the proof of the optimal regularity in Theorem~\ref{p:regularity2} shows that the conclusion of (A) holds true 
with the right H\"older exponent (in the case $\alpha =1$ the proof is a straightforward generalization; for $\alpha\in 
(\frac12,1)$ more details need to be checked). Nevertheless, we do not do it here.

\end{document}